\setlist[enumerate]{wide=15pt, leftmargin=15pt, labelwidth=15pt, align=left}
\numberwithin{equation}{section}
\newtheorem{theorem}[equation]{Theorem}
\newtheorem{proposition}[equation]{Proposition}
\newtheorem{lemma}[equation]{Lemma}
\newtheorem{corollary}[equation]{Corollary}
\theoremstyle{definition}
\newtheorem{rmk}[equation]{Remark}
\newenvironment{remark}[1][]{\begin{rmk}[#1] \pushQED{\qed}}{\end{rmk}}
\newtheorem{eg}[equation]{Example}
\newenvironment{example}[1][]{\begin{eg}[#1] \pushQED{\qed}}{\end{eg}}
\newtheorem{defn}[equation]{Definition}
\newenvironment{definition}[1][]{\begin{defn}[#1] \pushQED{\qed}}{\end{defn}}
\providecommand{\customgenericname}{}
\newcommand{\newcustomtheorem}[2]{%
  \newenvironment{#1}[1]{%
    \renewcommand\customgenericname{#2}%
    \renewcommand\theinnercustomgeneric{##1}%
    \innercustomgeneric
  }{\endinnercustomgeneric}
}
\newcommand{\Z}{\mathbb{Z}}
\newcommand{\bbA}{\mathbb{A}}
\newcommand{\DD}{\mathbb{D}}
\newcommand{\EE}{\mathbb{E}}
\newcommand{\xto}[1]{\xrightarrow{#1}}
\newcommand{\toto}{\rightrightarrows}
\newcommand{\ddim}{\underline{\dim}\,}
\DeclareDocumentCommand \ev { o o } {%
  \IfNoValueTF {#1}
    {\textnormal{ev}}
    {\textnormal{ev}(#1,#2)}
}
\newenvironment{nalign}{
  \begin{equation}
  \begin{aligned}
}{
  \end{aligned}
  \end{equation}
  \ignorespacesafterend
}
\newenvironment{nalign*}{
  \begin{equation*}
  \begin{aligned}
}{
  \end{aligned}
  \end{equation*}
  \ignorespacesafterend
}
\DeclareMathOperator{\rep}{rep}
\DeclareMathOperator{\Hom}{Hom}
\DeclareMathOperator{\coker}{coker}
\DeclareMathOperator{\AR}{AR}
\DeclareMathOperator{\im}{im}
\begin{document}

\title{Completely-decomposable subcategories of quiver representations}
\author[Y. Diaz]{Yariana Diaz}
\address{Department of Mathematics, Statistics, and Computer Science\\ 
Macalester College\\ 
Saint Paul, MN 55104}
\email{ydiaz@macalester.edu}

\begin{abstract}
    When filtering a topological space by a single parameter, the theory of quiver representations provides a complete framework for decomposing the resulting persistence module to obtain its barcode. This is achieved by interpreting the persistence module as a representation of a Type $\bbA_n$ quiver. The complexity increases significantly when filtering by two or more parameters. In particular, multi-parameter persistence typically yields tame or wild type quivers whose indecomposable representations are more complicated to describe and for which arbitrary representations are much more difficult to decompose. The theme of this work is to provide a framework for restricting to subcategories of quiver representations whose objects can be distinguished from one another through computational means.
\end{abstract}

\maketitle

\tableofcontents

\section{Introduction and Motivation}

This work is motivated by the question of decomposing persistence modules in the context of topological data analysis (TDA). In TDA, persistent homology is used to study the shape of data by tracking the appearance and disappearance of topological features across a filtration indexed by a poset. Then, applying homology with field coefficients yields a collection of vector spaces and linear maps between them. The resulting structure, called a persistence module, can be interpreted as a representation of a quiver $Q$ whose underlying diagram reflects the poset indexing the filtration. Other realizations of the structure of the persistence module include descriptions of persistence modules indexed over $\Z^n$ as $\Z^n$-graded modules over the polynomial ring of $n$ variables \cite{CZmultidim2007}. We can also view a persistence module as a functor from the indexing poset $(T,\leq)$, where $T \subseteq k^n$ for some field $k$, to the category $\text{Vec}_{k}$ of vector spaces over $k$.

Single-parameter persistence theory focuses on filtrations generated by a single parameter. The study of such filtrations is very well understood and corresponds very neatly to the quiver representation theory of type $\bbA_n$ quivers \cite{Oudot}. Type $\bbA_n$ quivers have the advantage that they have finitely many indecomposable representations. Quivers with this property are called \textbf{representation-finite}. By Gabriel's theorem in \cite{gabriel}, representation-finite quivers are exactly those which have an underlying graph described by the Dynkin diagrams, pictured below.

\begin{figure}[ht]
    \caption{Dynkin diagrams, types $\bbA_n, \DD_n, \EE_6, \EE_7, \EE_8$}
    \[\begin{tikzcd}[ampersand replacement=\&,sep=tiny]
	\&\&\&\&\&\&\&\&\&\&\&\& \bullet \\
	\&\&\&\&\&\&\&\& {\EE_6} \&\& \bullet \& \bullet \& \bullet \& \bullet \& \bullet \\
	{\bbA_n} \&\& \bullet \& \bullet \& \bullet \& \cdots \& \bullet \\
	\&\&\&\&\&\&\&\&\&\&\&\& \bullet \\
	\&\& \bullet \&\&\&\&\&\& {\EE_7} \&\& \bullet \& \bullet \& \bullet \& \bullet \& \bullet \& \bullet \\
	{\DD_n} \&\&\& \bullet \& \bullet \& \cdots \& \bullet \\
	\&\& \bullet \&\&\&\&\&\&\&\&\&\& \bullet \\
	\&\&\&\&\&\&\&\& {\EE_8} \&\& \bullet \& \bullet \& \bullet \& \bullet \& \bullet \& \bullet \& \bullet
	\arrow[no head, from=5-3, to=6-4]
	\arrow[no head, from=7-3, to=6-4]
	\arrow[no head, from=6-4, to=6-5]
	\arrow[no head, from=6-5, to=6-6]
	\arrow[no head, from=3-4, to=3-5]
	\arrow[no head, from=2-11, to=2-12]
	\arrow[no head, from=2-12, to=2-13]
	\arrow[no head, from=1-13, to=2-13]
	\arrow[no head, from=3-3, to=3-4]
	\arrow[no head, from=2-13, to=2-14]
	\arrow[no head, from=2-14, to=2-15]
	\arrow[no head, from=5-11, to=5-12]
	\arrow[no head, from=5-12, to=5-13]
	\arrow[no head, from=5-13, to=5-14]
	\arrow[no head, from=5-14, to=5-15]
	\arrow[no head, from=5-15, to=5-16]
	\arrow[no head, from=4-13, to=5-13]
	\arrow[no head, from=8-11, to=8-12]
	\arrow[no head, from=8-13, to=8-12]
	\arrow[no head, from=7-13, to=8-13]
	\arrow[no head, from=8-13, to=8-14]
	\arrow[no head, from=8-14, to=8-15]
	\arrow[no head, from=8-15, to=8-16]
	\arrow[no head, from=8-16, to=8-17]
	\arrow[no head, from=3-5, to=3-6]
	\arrow[no head, from=6-6, to=6-7]
	\arrow[no head, from=3-6, to=3-7]
\end{tikzcd}\]
\end{figure}

Moreover, by Gabriel's Theorem and the Krull-Remak-Schmidt Theorem, the representations of type $\bbA_n$ quiver (and the other Dynkin type quivers) can always be decomposed into a direct sum of indecomposable representations. This decomposition is unique up to isomorphism and permutation of the indecomposable summands. These indecomposable representations encode the appearance and disappearance of topological features in the original filtration. Representation-finite quivers have been used in various applications of topological data analysis, such as \cite{Jia2022}.

In multiparameter persistence, introduced by \cite{FMhomotopy1999}, we consider filtrations coming from two or more parameters. The complexity jumps a significant amount. The work in \cite{CZmultidim2007} introduces the concept of multiparameter persistent homology and describes its algebraic structures as finitely-generated multi-graded modules over rings of multivariate polynomials. Various researchers have introduced frameworks to extend the concept of a persistence module to the multiparameter setting \cite{CZmultidim2007, BErealizations2018, BOOsigned2023} and others have contributed by proving that notions of distance and stability also nicely extend \cite{BLstability2023}. Other work in \cite{BErealizations2018} showed that multiparameter persistence modules arise from data via the Vietoris-Rips filtration.

One of the complications in multiparameter persistence from the view of quiver representation theory is the prevalence of quivers which are not representation-finite. In fact, even a relatively simple bi-filtration indexed over an $m$ by $n$ grid quickly becomes representation-infinite and soon after, even of wild representation type.

\subsection{Main Results}
In this paper, I will suggest a framework for studying subcategories $\mathcal{C}$ of $\rep_{k}(Q)$ for $Q$ representation-infinite for which it is computationally feasible to decompose any object in $\mathcal{C}$ into a direct sum of indecomposable objects from $\mathcal{C}$. 

To begin the summary of the main results of this paper, we first recall a theorem of Bongartz \cite{Bongartz}.

\begin{theorem} Let $R$ be a commutative ring and $\mathcal{A}$ an abelian $R$-linear category such that each morphism set in $\mathcal{A}$ has finite length as an $R$-module. Let $\mathcal{C}$ be a full subcategory of $\mathcal{A}$ closed under direct sums and kernels. Then two objects $M$ and $N$ of $\mathcal{C}$ are isomorphic if and only if the lengths of $\Hom(M,X)$ and $\Hom(N,X)$ coincide for all $X$ in $\mathcal{C}$. \end{theorem}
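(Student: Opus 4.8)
The forward implication is immediate: if $M\cong N$ then $\Hom(M,X)\cong\Hom(N,X)$ as $R$-modules, so the two lengths agree. For the converse my plan is to peel the statement down by a short chain of reductions and then to settle a concrete separation problem inside $\mathcal{C}$. First I would note that $\mathcal{C}$ is Krull--Schmidt: for $X\in\mathcal{A}$ the left ideals of $\operatorname{End}(X)$ are $R$-submodules (the image of $R$ is central), hence satisfy the descending chain condition, so $\operatorname{End}(X)$ is left Artinian, therefore semiperfect; and since $\mathcal{A}$ is abelian its idempotents split, so every object is a finite direct sum of indecomposables with local endomorphism rings, uniquely so up to isomorphism. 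Closure of $\mathcal{C}$ under kernels forces closure under direct summands, a summand being the kernel of the complementary idempotent. Writing $h_L:=\ell_R\Hom(L,-)$ and using $h_{L\oplus L'}=h_L+h_{L'}$, after cancelling the common part of $M$ and $N$ the theorem reduces to this: if $M,N\in\mathcal{C}$ have no common indecomposable summand and $h_M=h_N$, then $M=N=0$.

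So suppose $h_M=h_N$, no common summand, and --- for contradiction --- not both zero. Evaluating the equality $h_M=h_N$ at $X=M$ and at $X=N$, and using that $\ell_R\Hom(L,Z)\geq\ell_R\bigl(\operatorname{End}(Z)/\rad\operatorname{End}(Z)\bigr)>0$ whenever an indecomposable $Z$ is a summand of $L$, one gets $M\neq 0$ and $N\neq 0$. Let $\mathcal{S}$ be the finite set of indecomposable summands of $M\oplus N$ and set $\mu(W)=[M:W]-[N:W]\in\Z\setminus\{0\}$ for $W\in\mathcal{S}$, so that $\sum_{W\in\mathcal{S}}\mu(W)\,\ell_R\Hom(W,X)=0$ for all $X\in\mathcal{C}$. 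It now suffices to exhibit finitely many objects $X_1,\dots,X_{|\mathcal{S}|}\in\mathcal{C}$ for which the matrix $\bigl(\ell_R\Hom(W,X_j)\bigr)_{W\in\mathcal{S},\,j}$ is invertible over $\Q$: that forces $\mu\equiv 0$, the desired contradiction.

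Producing these test objects is where closure under kernels genuinely enters, and it is the step I expect to be the main obstacle. The summands in $\mathcal{S}$ by themselves need not suffice: for $\Lambda$ the radical-square-zero path algebra of the two-vertex quiver with one arrow each way, $\ell_R\Hom(P_1,P_i)=\ell_R\Hom(P_2,P_i)$ for $i=1,2$, so $P_1$ and $P_2$ cannot be told apart by their maps into $\{P_1,P_2\}$; they are told apart by $\ell_R\Hom(P_1,S_1)=1\neq 0=\ell_R\Hom(P_2,S_1)$, where the test object $S_1=\ker(P_2\to P_1)$ lies in $\mathcal{C}$ only because $\mathcal{C}$ is closed under kernels. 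In favorable situations --- for instance when the objects involved have finite length --- one expects a combinatorial descent: for a suitably chosen summand find a test object, built if need be from kernels of maps among the summands already in play, that detects that one summand's multiplicity, deduce that its $\mu$-value is $0$, delete it, and iterate. But pinning down such a descent in general is delicate, because one cannot always isolate a single summand (in a homogeneous tube every indecomposable admits nonzero maps both to and from every other) and because objects of $\mathcal{A}$ need not have finite length (half-open interval modules, when $\mathcal{C}$ is a category of pointwise-finite persistence modules over $\N$, are typical). So the argument in full generality must exhibit directly a --- possibly infinite, locally finite --- family of evaluations that forms a nonsingular system, reading off the multiplicities of the ``long'' indecomposables as stable limiting values; carrying this out with only closure under kernels and the finite $R$-length of the morphism sets (which supplies the strict positivity above and the finiteness behind the limit) available is the real substance of the proof.

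One can alternatively repackage everything inside the abelian category of coherent contravariant functors on $\mathcal{C}$ --- abelian because $\mathcal{C}$ has weak kernels --- and read each multiplicity $[M:W]$ off the ``local'' simple functor $\Hom(-,W)/\!\rad$; but the naive homological shortcut there breaks down (over a tube these simple functors have infinite projective dimension), so it is ultimately the hands-on argument with kernels that carries through.
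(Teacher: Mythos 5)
Your forward implication and the Krull--Schmidt reductions are all sound: the $R$-finite-length hypothesis does make endomorphism rings Artinian and semiperfect, idempotents split in the abelian category $\mathcal{A}$, and closure under kernels does give closure under summands (kernel of $1-e$). The reformulation via $h_L=\ell_R\Hom(L,-)$ and its additivity is also fine. But at that point the proposal stops being a proof. You reduce to the claim ``$h_M=h_N$ and no common indecomposable summand implies $M=N=0$,'' propose to settle it by exhibiting test objects $X_1,\dots,X_{|\mathcal{S}|}$ making $\bigl(\ell_R\Hom(W,X_j)\bigr)$ invertible, and then candidly acknowledge that you do not know how to produce such a family in general and that doing so ``is the real substance of the proof.'' That assessment is accurate: the separation problem you have set up is genuinely hard head-on (as your tube example already shows), and nothing in the proposal resolves it. So there is a real gap, and it is the central step.

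The idea you are missing, and what makes Bongartz's argument work, is that one should \emph{not} try to invert a matrix of evaluations; one should instead build a single test object out of $\Hom(M,N)$ itself. Take $R$-module generators $f_1,\dots,f_n$ of $\Hom(M,N)$ and form $f\colon M^n\to N$, $f(m_1,\dots,m_n)=\sum f_i(m_i)$, with kernel $K=\ker f\in\mathcal{C}$ (here closure under direct sums and kernels is used, and used only here). By construction the induced sequence $0\to\Hom(M,K)\to\Hom(M,M^n)\to\Hom(M,N)\to 0$ is exact --- the right-hand map is surjective precisely because the $f_i$ generate. Now apply $\Hom(N,-)$ to the same kernel sequence: it is left exact, and a length count using $h_M=h_N$ at the three objects $K$, $M^n$, $N$ forces $\Hom(N,M^n)\to\Hom(N,N)$ to be surjective as well. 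Lifting $\mathrm{id}_N$ shows $f$ is a split epimorphism, so $N$ is a direct summand of $M^n$; Krull--Schmidt then gives $M$ and $N$ a common indecomposable summand $U\neq 0$. Cancel $U$, note $\ell_R\Hom(N',N')<\ell_R\Hom(N,N)$, and induct on $\ell_R\Hom(N,N)$. This is shorter and more robust than the nonsingular-system strategy: it never needs to isolate the multiplicity of any particular indecomposable, so the homogeneous-tube and infinite-length obstructions you worried about simply do not arise. If you want to salvage your write-up, replace the third paragraph with this retraction argument; everything you did before it can stay.
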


We will adjust Bongartz's theorem for the case where $R=k$ is an algebraically closed field, $A$ is the category of representations of a quiver $Q$, and $M, N$ are finite-dimensional representations of $Q$. Note that the category of representations of a quiver $Q$ is equivalent to the category of modules over the path algebra $k Q$ of that quiver. For a description of the functors defining this equivalence of categories and a proof, the reader may refer to \cite{Schiffler:2014aa}.

The proof analysis in my work refines Bongartz's theorem in two ways: by having fewer assumptions on $\mathcal{C}$, and giving a bound on $\dim X$ as demonstrated in the following theorem. The definition of the evaluation morphism $\ev[M][N]$ is given in \ref{def:evalmorph}.

\begin{corollary}[Corollary \ref{cor:specialC}]
    Let $k$ be a field, $A$ a finite-dimensional $k$-algebra, and let $\mathcal{C}$ be a subcategory of a $\rep_{k} (A)$ which is closed under direct sums and contains $\ker(\ev[M][N])$ for all $M,N$ in $\mathcal{C}$. Then, $M,N \in \mathcal{C}$ are isomorphic if and only if 
	\[\dim_{k} \Hom_{A}(M,X) = \dim_{k} \Hom_{A}(N,X),\]
    
    for all $X \in \mathcal{C}$ with $\dim X \leq (\dim M)^2 \dim N$.
\end{corollary}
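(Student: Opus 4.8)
The forward direction is immediate: an isomorphism $M \cong N$ induces $k$-linear isomorphisms $\Hom_A(M,X) \cong \Hom_A(N,X)$ for every $X$, so in particular all the dimensions in the stated range agree. For the converse we may assume $M, N \neq 0$ (otherwise the hypothesis is vacuous and the statement should be read as excluding these cases). Put $d := \dim_k \Hom_A(M,N)$ and work with the evaluation map $\ev := \ev[M][N] \colon M^{\oplus d} \to N$ together with $K := \ker(\ev)$, which lies in $\mathcal{C}$ by hypothesis. The dimension bound is arranged at the outset: since $\Hom_A(M,N) \subseteq \Hom_k(M,N)$ we have $d \leq (\dim_k M)(\dim_k N)$, hence $\dim_k K \leq \dim_k M^{\oplus d} = d \cdot \dim_k M \leq (\dim_k M)^2 \dim_k N$, and trivially $\dim_k M,\ \dim_k N \leq (\dim_k M)^2 \dim_k N$. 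So $M$, $N$ and $K$ — and any object of $\mathcal{C}$ of comparable size — are admissible test objects.

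The heart of the argument (and where Bongartz's hypotheses get trimmed) is to deduce, from the numerical equalities at $X \in \{M,N,K\}$ alone, that $\ev$ is a split epimorphism. By construction the induced map $\ev_* \colon \Hom_A(M, M^{\oplus d}) \to \Hom_A(M,N)$ is surjective: writing $\ev$ through a basis $f_1,\dots,f_d$ of $\Hom_A(M,N)$, the tuple $(0,\dots,\mathrm{id}_M,\dots,0)$ is sent to $f_j$. Applying $\Hom_A(M,-)$ to $0 \to K \to M^{\oplus d} \xrightarrow{\ev} N$ and using this surjectivity gives a short exact sequence, whence $\dim_k \Hom_A(M,K) = d\cdot\dim_k \mathrm{End}_A(M) - d$. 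The equality at $X = M$ gives $\dim_k \mathrm{End}_A(M) = \dim_k \Hom_A(N,M)$, the equality at $X = N$ gives $d = \dim_k \mathrm{End}_A(N)$, and left exactness of $\Hom_A(N,-)$ on the same sequence shows the rank of $\ev_* \colon \Hom_A(N,M^{\oplus d}) \to \Hom_A(N,N)$ equals $\dim_k \Hom_A(N,M^{\oplus d}) - \dim_k \Hom_A(N,K)$; feeding in the equality at $X = K$ forces this rank to equal $\dim_k \Hom_A(N,N)$, so $\ev_*$ is onto. Lifting $\mathrm{id}_N$ through $\ev$ shows $\ev$ splits, so $M^{\oplus d} \cong K \oplus N$; in particular $N$ is a direct summand of $M^{\oplus d}$, and by Krull–Remak–Schmidt every indecomposable summand of $N$ already occurs in $M$.

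It remains to promote ``$N$ is a summand of $M^{\oplus d}$'' to ``$N \cong M$.'' When $M$ is indecomposable this is quick: Krull–Remak–Schmidt applied to $M^{\oplus d} \cong K \oplus N$ forces $N \cong M^{\oplus r}$ for some $r \geq 1$, and then $d = \dim_k \Hom_A(M,N) = r\dim_k \mathrm{End}_A(M)$ together with $d = \dim_k \mathrm{End}_A(N) = r^2 \dim_k \mathrm{End}_A(M)$ gives $r = 1$. For general $M$ one cannot simply test against the indecomposable constituents of $M$, since $\mathcal{C}$ need not be closed under direct summands; instead one runs an induction (on $\dim_k M$, or on the number of indecomposable summands of $M$) and brings in the kernels of further evaluation maps — notably $\ker(\ev[N][M])$, whose cokernel $M/\mathrm{tr}_N(M)$ is nonzero precisely when $N$ does not generate $M$, so that this kernel acquires indecomposable summands outside $\mathrm{add}(M)$ that separate $M$ from $N$. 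Concretely, writing $M \cong \bigoplus_i M_i^{a_i}$ and $N \cong \bigoplus_i M_i^{c_i}$, the equalities at $X=M$ and $X=N$ already force the quadratic identity $\sum_{i,j}(a_i - c_i)(a_j - c_j)\dim_k\Hom_A(M_i,M_j) = 0$; the role of the further kernels is to rule out $c_i > a_i$, and once $c_i \leq a_i$ is known, positivity of the diagonal entries $\dim_k \mathrm{End}_A(M_i) > 0$ forces $a_i = c_i$, i.e. $M \cong N$.

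The main obstacle is exactly this last paragraph: verifying that all the additional test objects required (the kernels that escape $\mathrm{add}(M)$, plus whatever the induction produces) can be kept within the range $\dim_k X \leq (\dim_k M)^2 \dim_k N$, and organizing the induction so this bound is preserved at every stage — the strict inequality $\dim_k K < d\cdot\dim_k M$ is the kind of slack one has to exploit. The split-epimorphism step is robust and self-contained; controlling the sizes of the kernels that genuinely detect a failure of $M \cong N$ is the delicate content of the refinement over Bongartz's theorem.
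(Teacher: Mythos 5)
Your first two paragraphs faithfully reproduce the argument the paper intends: trace Bongartz's proof, observe that the split-epimorphism step (showing $N$ is a direct summand of $M^{\oplus d}$ where $d = \dim_k\Hom(M,N)$) consults only $X \in \{M,\, N,\, \ker\ev[M][N]\}$, and note that all three satisfy the bound $\dim X \leq (\dim M)^2\dim N$ because $\ker\ev[M][N]$ sits inside $M \otimes_k \Hom(M,N)$. The rank bookkeeping through $\Hom_A(M,-)$ and $\Hom_A(N,-)$, feeding in the numerical hypothesis at $X = M$, $N$, and the kernel, is correct, and lifting $\mathrm{id}_N$ to split $\ev[M][N]$ is exactly the mechanism in Bongartz's proof.

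The place your proposal stops is also the place the paper's own discussion is silent, and you are right to flag it. Bongartz's induction cancels a common indecomposable summand $U$, writes $M = U \oplus M'$, $N = U \oplus N'$, and reapplies the claim to $M'$, $N'$. In Bongartz's setting this is legitimate because $\mathcal{C}$ is closed under \emph{all} kernels, hence (taking kernels of idempotent projections) under direct summands, so $M', N' \in \mathcal{C}$ and therefore $\ker\ev[M'][N'] \in \mathcal{C}$. The corollary's hypotheses only grant closure under direct sums and containment of $\ker\ev[M][N]$ for $M,N \in \mathcal{C}$; they do not give summand closure, so $M'$, $N'$ need not lie in $\mathcal{C}$ and the inductive step cannot simply be rerun. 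You diagnose this correctly and float some repairs (the quadratic identity in the multiplicities, testing against $\ker\ev[N][M]$, controlling the sizes of the new kernels), but you do not carry any of them through, and you say so. For comparison, the paper's proof consists of two short remarks: that following Bongartz's argument only $\ker\ev[M][N]$ is needed rather than arbitrary kernels, and that the three test objects $M$, $N$, $\ker\ev[M][N]$ obey the stated bound. It does not step through the inductive reduction or explain why $\ker\ev[M'][N']$ remains in $\mathcal{C}$ without summand closure, though it is worth noting (as you do implicitly) that the dimension bound itself would survive the induction, since $(\dim M')^2\dim N' \leq (\dim M)^2\dim N$ and the Hom-dimension equalities at each $X$ are inherited by cancelling the common summand. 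In short: your proposal reconstructs the paper's sketch accurately and honestly records the step it leaves implicit; closing that step — promoting ``$N$ is a summand of $M^{\oplus d}$'' to ``$M \cong N$'' without assuming summand closure of $\mathcal{C}$ — is the genuine unresolved content, both in your writeup and in the paper's.
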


The next result and its corollary were instrumental in using the evaluation morphism in conjunction with Auslander-Reiten theory for quivers. The first provides a description of the $\tau$-translate of the evaluation morphism. The corollary proves that the order of applying the Auslander-Reiten translation and taking the kernel of the evaluation morphism may be exchanged in certain situations.

\begin{theorem}{\ref{thm:tev_description}}
    For a quiver $Q$ and two representations $M,N \in \rep(Q)$, 
        \begin{equation}(\mathbbm{1}_{\tau M} \otimes \, \varphi) \circ \ev[\tau M][\tau N] = \tau \ev[M][N].\end{equation}
    where $\varphi:\Hom_{A}(M,N) \to \Hom_{A}(\tau M, \tau N)$ is defined on $f:M\to N$ by \[\varphi f = \bigoplus_{a\in Q_1} \mathbbm{1}_{I(ta)} \otimes \, f_{sa} \Big|_{\tau M}\, ,\]
    which is the restriction of the map
    \[\bigoplus_{a\in Q_1} \mathbbm{1}_{I(ta)} \otimes \, f_{sa}: \bigoplus_{a\in Q_1} I(ta) \otimes M_{sa} \to \bigoplus_{a\in Q_1} I(ta) \otimes N_{sa}
    \] to $\tau M$.
    In other words, the following diagram commutes:
        \begin{equation}\begin{tikzcd}[column sep=small]
        	{\tau M \otimes\Hom_{A}(M,N)} &&& {\tau N} \\
        	\\
        	\\
        	{\tau M \otimes\Hom_{A}(\tau M, \tau N)}
        	\arrow["{\tau \ev[M][N]}", from=1-1, to=1-4]
        	\arrow["{\ev[\tau M][\tau N]}"', from=4-1, to=1-4]
        	\arrow["{\mathbbm{1}_{\tau M}\otimes \, \varphi}"', dashed, from=1-1, to=4-1]
        \end{tikzcd}\end{equation}
\end{theorem}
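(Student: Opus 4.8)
The plan is to compute both composites in the triangle by means of an explicit model for the Auslander--Reiten translation, and to observe that they reduce to the same elementary formula. Fix $M,N\in\rep(Q)$ and set $V:=\Hom_A(M,N)$, a finite-dimensional $k$-vector space; recall that $\ev[M][N]\colon M\otimes_k V\to N$ has component $m\otimes f\mapsto f_i(m)$ at each vertex $i$. The input is the standard projective resolution
\[
0\longrightarrow\bigoplus_{a\in Q_1}P(ta)\otimes M_{sa}\xrightarrow{d_M}\bigoplus_{i\in Q_0}P(i)\otimes M_i\longrightarrow M\longrightarrow0,
\]
which is functorial in $M$: the differential $d_M$ is natural, and a morphism $g\colon M\to M'$ induces $\bigoplus_a\mathbbm{1}_{P(ta)}\otimes g_{sa}$ and $\bigoplus_i\mathbbm{1}_{P(i)}\otimes g_i$ on the two terms. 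Applying the Nakayama functor $\nu=D\Hom_A(-,A)$, which is additive and carries $P(i)$ to $I(i)$, identifies $\tau M$ with $\ker\bigl(\nu d_M\colon\bigoplus_a I(ta)\otimes M_{sa}\to\bigoplus_i I(i)\otimes M_i\bigr)$ and identifies $\tau g$ with the restriction to $\tau M$ of $\nu\bigl(\bigoplus_a\mathbbm{1}_{P(ta)}\otimes g_{sa}\bigr)=\bigoplus_a\mathbbm{1}_{I(ta)}\otimes g_{sa}$. Taking $g=f\in V$ gives precisely the map $\varphi f$ of the statement, so $\varphi f=\tau f$.

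The second ingredient is that tensoring vertexwise with the fixed space $V$ commutes with everything above: $P^{\bullet}(M\otimes V)=P^{\bullet}(M)\otimes V$ with $d_{M\otimes V}=d_M\otimes\mathbbm{1}_V$, and since $V$ is a finite-dimensional vector space $\nu(X\otimes V)=\nu X\otimes V$ and $\ker(h\otimes\mathbbm{1}_V)=\ker(h)\otimes V$. Hence $\tau(M\otimes V)=\tau M\otimes V$, compatibly with the inclusion into $\bigoplus_a I(ta)\otimes M_{sa}\otimes V$. In particular $\tau\,\ev[M][N]$ is a bona fide morphism $\tau M\otimes V\to\tau N$, matching the source and target appearing in the diagram.

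With this in place the computation is direct. Since the vertex-$sa$ component of $\ev[M][N]$ is $M_{sa}\otimes V\to N_{sa}$, $m\otimes f\mapsto f_{sa}(m)$, the recipe for $\tau$ on morphisms says that $\tau\,\ev[M][N]$ is the restriction to $\tau M\otimes V$ of $\bigoplus_a\mathbbm{1}_{I(ta)}\otimes(\ev[M][N])_{sa}$. Writing a general element of $\tau M\subseteq\bigoplus_a I(ta)\otimes M_{sa}$ as $\sum_a x_a$ with $x_a=\sum_k y_{a,k}\otimes m_{a,k}$, this map sends $\bigl(\sum_a x_a\bigr)\otimes f$ to $\sum_{a,k}y_{a,k}\otimes f_{sa}(m_{a,k})$. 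On the other hand, the composite $\tau M\otimes V\xrightarrow{\mathbbm{1}_{\tau M}\otimes\varphi}\tau M\otimes\Hom_A(\tau M,\tau N)\xrightarrow{\ev[\tau M][\tau N]}\tau N$ sends $x\otimes f$ to $\varphi(f)(x)=(\tau f)(x)=\bigl(\bigoplus_a\mathbbm{1}_{I(ta)}\otimes f_{sa}\bigr)(x)=\sum_{a,k}y_{a,k}\otimes f_{sa}(m_{a,k})$. The two expressions coincide, so the triangle commutes.

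The routine portions are the compatibility of $-\otimes V$ with the resolution, with $\nu$, and with kernels, together with the vertexwise identity $(\ev[M][N])_{sa}(m\otimes f)=f_{sa}(m)$. The substantive step, and the one I would be most careful with, is the explicit description of $\tau$ on morphisms used above: that lifting $g$ functorially along the \emph{standard} (a priori non-minimal) resolution, applying $\nu$, and restricting to kernels computes $\tau g$. This is where hereditariness of $kQ$ enters: the standard resolution differs from a minimal projective presentation only by trivial complexes $P\xrightarrow{\sim}P$, which $\nu$ carries to $I\xrightarrow{\sim}I$ and which therefore contribute nothing to $\ker(\nu d_M)$ or to the induced map on kernels; the induced map on kernels is moreover independent of the chosen lift, since a homotopy term vanishes on $\ker(\nu d_M)$. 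If this description of $\tau$ on objects and morphisms is already available from the earlier development, the proof reduces to the element-wise comparison in the previous paragraph.
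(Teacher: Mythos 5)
Your proof is correct and follows essentially the same route as the paper: establish that $\tau$ acts on a morphism $g$ as the restriction of $\bigoplus_a \mathbbm{1}_{I(ta)}\otimes g_{sa}$ to $\tau M$ (this is the paper's Proposition~\ref{prop:tau_f}), apply this with $g=\ev[M][N]$, and compare the two composites elementwise on $\bigl(\sum_a y_a\otimes m_a\bigr)\otimes f$. You are in fact more explicit than the paper on two points it glosses over: that $\tau(M\otimes V)\cong\tau M\otimes V$ (needed to identify the source of $\tau\,\ev[M][N]$ with $\tau M\otimes\Hom_A(M,N)$) and that using the standard, not necessarily minimal, projective resolution still computes $\tau$ on objects and morphisms, which requires the homotopy/trivial-summand argument you sketch via hereditariness.
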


\begin{customcor}{\ref{cor:ker_ev_t}}
    For two indecomposable representations $M, N \in \rep_{k}(Q)$ of a quiver $Q$ and $k \in \Z_{\geq 0}$ such that $\tau^{k-1} M, \tau^{k-1} N$ are not projective, we have \begin{equation}\ker \ev[M][N] \cong \tau^{-k} \ker \ev[\tau^k M][\tau^k N].\end{equation} Similarly, the statement holds for $k < 0$ if $\tau^{k+1} M, \tau^{k+1} N$ are not injective.
\end{customcor}

We aim to provide a framework wherein this adjusted theorem of Bongartz is a tool to detect isomorphism via the evaluation morphism. The evaluation morphism is compatible with the Auslander-Reiten translation and supports explorations for various quivers, such as the Kronecker quiver. 

\section{Background and Notation}
    We will generally follow notation as given in \cite{Schiffler:2014aa, DWbook} with a few changes in notation. In all situations, $k$ is an algebraically closed field.

\subsection{Quiver Representation Theory}

    A \textit{quiver} $Q = (Q_0,Q_1,s,t)$ consists of $Q_0$, a set of vertices, $Q_1$, a set of arrows, $s:Q_1 \to Q_0$, which maps an arrow to its starting vertex, and $t:Q_1 \to Q_0$, which maps an arrow to its terminal vertex. We will sometimes represent an element $\alpha \in Q_1$ by drawing an arrow from its starting vertex $s(\alpha)$ to its terminal vertex $t(\alpha)$ as $s(\alpha)\xto{\alpha} t(\alpha)$. A \textit{representation} $M=(M_x,\varphi_{\alpha})_{x\in Q_0, \alpha \in Q_1}$ of a quiver $Q$ is a collection of $k$-vector spaces $M_x$, one for each vertex $x\in Q_0$, and a collection of $k$-linear maps $\varphi_{\alpha}:M_{s(\alpha)}\to M_{t(\alpha)}$, one for each arrow $\alpha \in Q_1$. For a finite quiver $Q$, a representation $M$ is called \textit{finite-dimensional} if each vector space $M_x, x \in Q_0$, is finite-dimensional. The representation $M$ can then be described in terms of its \textit{dimension vector} $\ddim M = (\dim M_x)_{x \in Q_0}$.

    We will write concatenations of arrows in the same order as the composition of functions. So, for a path $p=\alpha_{\ell}\alpha_{\ell-1}\cdots\alpha_1$ comprised of arrows, we first apply $\alpha_1$, then $\alpha_2$, and so on. The set of all paths of a quiver generates over $k$ the \textit{path algebra }of $Q$. Given a quiver $Q$, we will use $\rep_{k} Q$ to denote the category of finite-dimensional representations of $Q$ along with the morphisms between them. Note that $\rep_{k} Q$ is an abelian $k$-category. The reader may refer to \cite{Schiffler:2014aa} for definitions of \textit{morphisms, direct sums, indecomposability} in the quiver setting.
    
    This paper is mainly concerned with representation-tame quivers. A quiver $Q$ is \textit{tame} if, for any dimension vector, all but finitely many isomorphism classes of indecomposable representations of that dimension can be described by a finite number of one-parameter families. An example of a tame quiver which we work with extensively in this paper is the Kronecker quiver.

\begin{definition} The Kronecker quiver $\mathsf{K}$ is a quiver with two vertices $x,y$ and two arrows $\alpha, \beta$ oriented in the same direction:

\[\begin{tikzcd}
	x & y
	\arrow["\alpha", shift left=1, from=1-1, to=1-2]
	\arrow["\beta"', shift right=1, from=1-1, to=1-2]
\end{tikzcd}\]

 The quiver $\mathsf{K}$ is of tame representation type. Its indecomposable representations are divided into three components: the postprojective indecomposables  $\mathcal{P} = \{P_n, \, n\in \Z_{\geq 0}\}$, the preinjective indecomposables $\mathcal{I}=\{I_n, \, n\in \Z_{\geq 0}\}$, and the regular indecomposables $\mathcal{R}=\{R_n(\lambda), \, n\in \Z_{\geq 0}, \, \lambda \in k \cup \{\infty\}\}$.

Let $\mathbbm{1}_n$ denote the $n \times n$ identity matrix. For the postprojective indecomposable representations $P_n: k^n \rightrightarrows k^{n+1} \in\mathcal{P}$, we have $\ddim P_n = (n, n+1)$ and maps $(\alpha_{P_n},\beta_{P_n})$ where 
    \[  \alpha_{P_n} = \begin{bmatrix} \mathbbm{1}_n\\ 0 \end{bmatrix}, \quad 
        \beta_{P_n} = \begin{bmatrix} 0\\ \mathbbm{1}_n \end{bmatrix} 
    \]
For the preinjective indecomposable representations $I_n: k^{n+1} \rightrightarrows k^n \in\mathcal{I}$, we have $\ddim I_n = (n+1, n)$ and maps $(\alpha_{I_n},\beta_{I_n})$ where
    \[  \alpha_{I_n} = \begin{bmatrix} \mathbbm{1}_n & 0 \end{bmatrix}, \quad 
        \beta_{I_n} = \begin{bmatrix} 0 & \mathbbm{1}_n \end{bmatrix} 
    \]
        
For the regular indecomposable representations $R_n(\lambda): k^n \rightrightarrows k^n \in\mathcal{R}$, we have $\ddim R_n(\lambda) = (n, n)$ and maps $(\alpha_{R_n(\lambda)},\beta_{R_n(\lambda)})$ where 
    \[  \alpha_{R_n(\lambda)} = \begin{cases} \mathbbm{1}_n, & \lambda \neq \infty\\ J_n(0), & \lambda = \infty \end{cases}, \qquad 
        \beta_{R_n(\lambda)} = \begin{cases} J_n(\lambda), & \lambda \neq \infty\\ \mathbbm{1}_n, & \lambda = \infty \end{cases},
    \]
    with $J_n(\lambda)$ the Jordan block of size $n$ with eigenvalue $\lambda$. 
\end{definition}
For conciseness and whenever there is no confusion, we refer to the maps $\alpha_M, \beta_M$ for a representation $M$ as $\alpha, \beta$.

\subsection{Homology of Quiver Representations}\label{sec:homology}
This section reviews some definitions of homology of quiver representation that will be useful for the results in Chapters 4 and 5.

\begin{definition} 
Let $x$ be a vertex of the quiver $Q$. Define representations $P(x)$ and $I(x)$ as follows:
\begin{enumerate}[(a)]
    \item $P(x) = (P(x)_y, \varphi_{\alpha})_{y\in Q_0, \alpha\in Q_1}$ 
    where $P(x)_y$ is the $k$-vector space with basis the set of all paths out of $x$ to $y$ in $Q$; so the elements of $P(x)_y$ are of the form $\sum_c \lambda_c c$, where $c$ runs over all paths from $x$ to $y$, and $\lambda_c \in k$; and if $y \xrightarrow{\alpha} z$ is an arrow in $Q$, then $\varphi_{\alpha}:P(x)_y \to P(x)_z$ is the linear map defined on the basis by composing the paths from $x$ to $y$ with the arrow $y \xrightarrow{\alpha} z$.
    
    $P(x)$ is called the \textbf{projective representation} at vertex $x$.

    \item $I(x) = (I(x)_y, \varphi_{\alpha})_{y\in Q_0, \alpha\in Q_1}$ 
    where $I(x)_y$ is the $k$-vector space with basis the set of all paths from $y$ into $x$ in $Q$; so the elements of $I(x)_y$ are of the form $\sum_c \lambda_c c$, where $c$ runs over all paths from $y$ to $x$, and $\lambda_c \in k$;
    and if $y \xrightarrow{\alpha} z$ is an arrow in $Q$, then $\varphi_{\alpha}:I(x)_y \to I(x)_z$ is the linear map defined on the basis by deleting the arrow $y \xrightarrow{\alpha} z$ from those path from $y$ to $x$ which start with $\alpha$ and sending to zero the paths that do not start with $\alpha$.
    
    $I(x)$ is called the \textbf{injective representation} at vertex $x$.
\end{enumerate}
\end{definition}



 The objects $P(x)$ and the objects $I(x)$ for $x \in Q_0$ are projective and injective, respectively. Both $P(x)$ and $I(x)$ are indecomposable representations as proven in \cite{Schiffler:2014aa}. The following proposition provides a description of some particular $\Hom$-spaces involving the representations $P(x)$ and $I(x)$.

\begin{proposition}\label{prop:hom_isos}
    For two representations $M, N$ of a quiver $Q$ and $P(x), I(x)$ as defined above for vertices $x \in Q_0$, we have 
        \begin{enumerate}[(a)]
            \item $\Hom_Q(P(x),N) \cong N(x)$
            \item $\Hom_Q(M,I(x)) \cong M(x)^*$
        \end{enumerate}
\end{proposition}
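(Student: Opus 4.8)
The plan is to prove (a) directly, identifying a morphism out of $P(x)$ with the image of the distinguished basis vector $e_x\in P(x)_x$ (the trivial path at $x$), and then to deduce (b) from (a) by applying the standard $k$-linear duality on quiver representations. For (a), I would define the $k$-linear map
\[
\Phi\colon \Hom_Q(P(x),N)\longrightarrow N(x),\qquad \Phi(f)=f_x(e_x),
\]
and first record that, for any path $c=\alpha_\ell\cdots\alpha_1$ from $x$ to a vertex $y$, the definition of the structure maps of $P(x)$ gives $c=\varphi^{P(x)}_{\alpha_\ell}\circ\cdots\circ\varphi^{P(x)}_{\alpha_1}(e_x)$ in $P(x)_y$. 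Since a morphism $f$ of representations commutes with all structure maps, $f_y(c)=\varphi^{N}_{\alpha_\ell}\circ\cdots\circ\varphi^{N}_{\alpha_1}\big(f_x(e_x)\big)$, and because the paths from $x$ to $y$ form a basis of $P(x)_y$, the entire morphism $f$ is determined by $f_x(e_x)$; hence $\Phi$ is injective. For surjectivity, given $n\in N(x)$ I would define $f_y$ on the basis of $P(x)_y$ by $f_y(c)=\varphi^N_c(n)$, where $\varphi^N_c$ is the composite of the structure maps of $N$ along $c$, extend $k$-linearly, and check that $f=(f_y)_{y\in Q_0}$ is a morphism of representations: for an arrow $\alpha\colon y\to z$ and a basis path $c$ from $x$ to $y$ this reduces to $f_z(\alpha c)=\varphi^N_\alpha\big(f_y(c)\big)$, which holds since $\varphi^N_{\alpha c}=\varphi^N_\alpha\circ\varphi^N_c$. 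Then $\Phi(f)=n$, so $\Phi$ is an isomorphism (and it is visibly natural in $N$).

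For (b), I would invoke the usual duality $D=\Hom_k(-,k)$, a contravariant equivalence $\rep_k Q\to\rep_k Q^{\op}$ with $D\circ D\cong\mathrm{id}$ and a natural isomorphism $\Hom_Q(A,B)\cong\Hom_{Q^{\op}}(DB,DA)$. The key observation is the identification $D\big(I_Q(x)\big)\cong P_{Q^{\op}}(x)$: a path from $y$ to $x$ in $Q$ is the same datum as a path from $x$ to $y$ in $Q^{\op}$, which matches the vertexwise vector spaces, and under dualization the ``delete the initial arrow'' structure maps defining $I_Q(x)$ transpose to exactly the ``compose with an arrow'' structure maps defining $P_{Q^{\op}}(x)$. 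Granting this, combining the duality with part (a) applied to the quiver $Q^{\op}$ yields the chain of natural isomorphisms
\[
\Hom_Q(M,I(x))\;\cong\;\Hom_{Q^{\op}}\big(DI(x),\,DM\big)\;\cong\;\Hom_{Q^{\op}}\big(P_{Q^{\op}}(x),\,DM\big)\;\cong\;(DM)(x)\;=\;M(x)^{*}.
\]

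The only steps demanding care are routine verifications: in (a), that the map $f$ built from $n$ really commutes with every structure map; and in (b), the bookkeeping matching the combinatorial description of the structure maps of $DI_Q(x)$ with those of $P_{Q^{\op}}(x)$. I do not expect a genuine obstacle here. One caveat worth flagging is that when $Q$ has oriented cycles the representations $P(x)$ and $I(x)$ may fail to be finite-dimensional; since the quivers relevant to this paper (the Kronecker quiver, type $\bbA_n$, and so on) are acyclic this is immaterial, but in general one works in the ambient category of all representations, where the computation in (a)---and hence (b)---is unaffected. Alternatively, (b) can be proved directly in the acyclic case, sending $g\colon M\to I(x)$ to the functional $m\mapsto$ (coefficient of $e_x$ in $g_x(m)$) and checking bijectivity exactly as in (a).
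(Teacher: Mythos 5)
The paper does not prove Proposition~\ref{prop:hom_isos}; it states it as background material and defers to the cited reference \cite{Schiffler:2014aa}, where it appears as a standard lemma about projective and injective quiver representations. Your proof is correct and is exactly the standard textbook argument: part~(a) is the Yoneda-style identification $f\mapsto f_x(e_x)$, using that the paths out of $x$ form a basis of each $P(x)_y$ and are generated from $e_x$ by the structure maps; part~(b) follows by applying $k$-linear duality and the identification $D\big(I_Q(x)\big)\cong P_{Q^{\op}}(x)$. The caveat you flag about possible infinite-dimensionality of $P(x)$ and $I(x)$ when $Q$ has oriented cycles is well taken and correctly resolved (either by working in the category of all representations for~(a), or by the direct argument you sketch for~(b)); it is also moot for the acyclic quivers treated in this paper. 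No gaps.
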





 We continue to define the standard projective resolution which will feature prominently in the proofs of the main results in Chapter 5.

\begin{proposition}\label{prop:stdres} For any left $k Q$-module $M$, we have an exact sequence of $k Q$-modules 
    
    \begin{equation}\label{eqn:stdprojres}
        0 \rightarrow \bigoplus_{\alpha \in Q_1} P(t(\alpha)) \otimes_{k} e_{s(\alpha)}M \xrightarrow{u} \bigoplus_{x\in Q_0} P(x) \otimes_{k} e_x M \xrightarrow{v} M \rightarrow 0,
    \end{equation}
    
    where the maps $u$ and $v$ are defined by 
        \[u(a \otimes m) \coloneqq a\alpha \otimes m - a \otimes \alpha m, \quad a\in P(t(\alpha)), m \in e_{s(\alpha)}M\] and 
        \[v(a \otimes m) \coloneqq am, \quad a\in P(x), m\in e_x M.\] 
        
    Here each $P(y) \otimes_{k} e_x M$ is a $k Q$-module via $a(b\otimes m)=ab \otimes m$, where $a \in k Q, b \in P(y),$ and $m \in e_x M$.
\end{proposition}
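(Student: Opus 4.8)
The plan is to check that $u$ and $v$ are well-defined and $k Q$-linear, that $v$ is surjective and $vu=0$ essentially by associativity, and then to deduce exactness at the middle term together with injectivity of $u$ from a filtration‑by‑path‑length argument; along the way it is worth recording that every term of \eqref{eqn:stdprojres} is projective (each $P(x)\otimes_k V$ is a direct sum of copies of $P(x)$), so that the sequence is genuinely a projective resolution, whence $k Q$ is hereditary.

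First I would fix coordinates. As a left $k Q$-module, $P(x)=(k Q)e_x$ is free over $k$ on the paths $p$ with source $x$, so the middle term is, as a $k$-vector space, $\bigoplus_{p}\, kp\otimes_k e_{s(p)}M$; thus every element has a unique expression $\xi=\sum_p p\otimes m_p$ with $m_p\in e_{s(p)}M$ and almost all $m_p=0$, and likewise every element of the left term is uniquely $\eta=\sum_{\alpha\in Q_1}\sum_{a:\,s(a)=t(\alpha)} a\otimes m_{\alpha,a}$ with $m_{\alpha,a}\in e_{s(\alpha)}M$. Given this, well-definedness of $u$ and $v$ on pure tensors is just $k$-bilinearity (including the check that the two summands of $u(a\otimes m)$ lie in the asserted middle term, namely in $P(s(\alpha))\otimes_k e_{s(\alpha)}M$ and $P(t(\alpha))\otimes_k e_{t(\alpha)}M$), $k Q$-linearity follows from $c(a\otimes m)=ca\otimes m$, surjectivity of $v$ holds because $v\big(\sum_x e_x\otimes e_x m\big)=m$, and $vu(a\otimes m)=(a\alpha)m-a(\alpha m)=0$ by associativity of the action of $k Q$ on $M$.

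For exactness at the middle I would induct on the \emph{length} of $\xi$, i.e.\ the largest length of a path $p$ with $m_p\neq 0$. If this length is $\ell\ge 1$, then for each path $p=a\alpha$ of length $\ell$ appearing in $\xi$ — here $\alpha$ is the first arrow of $p$ and $a$ the remaining path of length $\ell-1$, a decomposition uniquely determined by $p$ — set $\eta=\sum_p a\otimes m_p$, a well-defined element of the left term. Since $u(a\otimes m_p)=p\otimes m_p-a\otimes\alpha m_p$ and $a$ has length $\ell-1$, the element $\xi-u(\eta)$ has strictly smaller length, so by induction $\xi\equiv\sum_x e_x\otimes n_x \pmod{\im u}$ for suitable $n_x\in e_x M$. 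If in addition $v(\xi)=0$, applying $v$ and using $vu=0$ gives $\sum_x n_x=0$ in $M=\bigoplus_x e_x M$, hence each $n_x=0$ and $\xi\in\im u$; together with $vu=0$ this is exactness at the middle.

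Finally, for injectivity of $u$ I would compare top-length coefficients: if $\eta=\sum_{\alpha,a}a\otimes m_{\alpha,a}\neq0$, let $\ell$ be the largest length of a path $a$ occurring with $m_{\alpha,a}\neq0$; among the paths appearing in $u(\eta)=\sum_{\alpha,a}\big(a\alpha\otimes m_{\alpha,a}-a\otimes\alpha m_{\alpha,a}\big)$, length $\ell+1$ arises only through the terms $a\alpha\otimes m_{\alpha,a}$ with $a$ of length $\ell$, and for such a path $p=a\alpha$ the pair $(\alpha,a)$ is recovered from $p$, so the $p$-coordinate of $u(\eta)$ equals $m_{\alpha,a}$, which is nonzero for at least one $p$. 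Hence $u$ is injective and \eqref{eqn:stdprojres} is exact. (Alternatively, one may recognize \eqref{eqn:stdprojres} as the fundamental bimodule exact sequence for the tensor algebra $k Q$, tensored over $k Q$ with $M$, or simply invoke \cite{DWbook, Schiffler:2014aa}.) The only genuine difficulty is the bookkeeping — matching the paper's arrow-concatenation convention to the decomposition $p=a\alpha$ and consistently exploiting uniqueness of the coordinate expressions above — so I expect that, rather than any single computation, to be where care is needed.
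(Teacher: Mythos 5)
The paper gives no proof of this proposition: it appears in the background section and is presented as a standard fact, with references to \cite{Schiffler:2014aa, DWbook} for the surrounding material, so there is no in-paper argument to compare against. Your proof is correct and is essentially the classical filtration-by-path-length argument for the standard projective resolution over a path (tensor) algebra. The coordinatization of the middle term as $\bigoplus_{p} e_{s(p)}M$ (over all paths $p$), the well-definedness and $kQ$-linearity checks, the identities $v\bigl(\sum_x e_x\otimes e_x m\bigr)=m$ and $vu=0$, the reduction step $\xi\mapsto\xi-u(\eta)$ that strips off the top-length layer to prove exactness at the middle, and the top-degree coefficient comparison establishing injectivity of $u$ are all handled correctly, including the paper's concatenation convention whereby the first-applied arrow $\alpha$ of a path $p=a\alpha$ is the rightmost factor and $a\in P(t(\alpha))$ is the remaining tail. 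Your closing observation, that the sequence is the fundamental $kQ$-bimodule exact sequence tensored over $kQ$ with $M$, is indeed the slick conceptual route and is the proof one usually cites; the hands-on version you wrote is what one would produce if asked to verify it from scratch.
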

We call the sequence \ref{eqn:stdprojres} the \textbf{standard projective resolution} of the representation $M$.

\subsection{Auslander-Reiten Theory}\label{sec:ARtheory}

Let $\nu: \rep_{k} Q \to \rep_{k} Q$ be the \textbf{Nakayama functor}. The restriction of $\nu$ to $\text{proj}(Q)$ is an equivalence of the categories of projective representations of $Q$, $\text{proj}(Q)$, and  injective representations, $\text{inj}(Q)$, whose quasi-inverse is given by \[\nu^{-1}=\Hom(DA^{\text{op}},-):\text{inj}(Q)\to\text{proj}(Q).\] 
Moreover, for any vertex $x$, \[\nu P(x) = I(x)\] 
and if $c$ is a path from $x$ to $y$, and $f_c \in \Hom_Q(P(y),P(x))$ is the corresponding morphism, then \[\nu f_c : I(y) \to I(x)\] is the morphism given by the cancellation of the path $c$. 
 
\begin{definition}\label{def:tau}
Let \[0\to P_1 \xrightarrow{p_1} P_0 \xrightarrow{p_0} M \to 0\] be a minimal projective resolution. Applying the Nakayama functor, we get an exact sequence 
\[0\to \tau M \to \nu P_1 \xrightarrow{\nu p_1} \nu P_0 \xrightarrow{\nu p_0}  \nu M \to 0\]
where $\tau M = \ker(\nu p_1)$ is called the Auslander-Reiten translate of $M$ and $\tau$ is called the \textbf{Auslander-Reiten translation}.
\end{definition}

\begin{definition}
Let \[0\to M \xrightarrow{i_0} I_0 \xrightarrow{i_1} I_1 \to 0\] be a minimal injective resolution. Applying the inverse Nakayama functor, we get an exact sequence 
\[0\to \nu^{-1} M \xrightarrow{\nu^{-1} i_0} \nu^{-1} I_0 \xrightarrow{\nu^{-1} i_1} \nu^{-1} I_1 \to  \tau^{-1} M \to 0\]
where $\tau^{-1} M = \coker(\nu^{-1} i_1)$ is called the inverse Auslander-Reiten translate of $M$ and $\tau^{-1}$ the \textbf{inverse Auslander-Reiten translation}.
\end{definition}

The reader may refer to the sources \cite{Schiffler:2014aa, DWbook} more information on Auslander-Reiten theory, as well as a definition of the Auslander-Reiten quiver (AR-quiver).

\begin{example}
 The three components of the Kronecker quiver $\mathcal{P}$, $\mathcal{I}$, $\mathcal{R}$ are organized in the Auslander-Reiten quiver of $\mathsf{K}$ as show in figure~\ref{fig:AR(K)} below. The action of the Auslander-Reiten translation $\tau$ on the indecomposables is also represented.

\begin{figure}[ht]
\caption{$\AR(\mathsf{K})$ for the Kronecker quiver}
\[\begin{tikzcd}[ampersand replacement=\&,column sep=tiny]
	\&\&\&\&\& {} \\
	\&\&\&\&\& {R_3(\lambda)} \arrow["{\tau }"' swap,loop right, dashed, color={rgb,255:red,214;green,153;blue,92},looseness=3]\\
	\& {P_1} \& {} \& {P_3} \& {} \& {R_2(\lambda)} \arrow["{\tau }"' swap,loop right, dashed, color={rgb,255:red,214;green,153;blue,92},looseness=3] \&\&\& {I_2} \&\& {I_0} \\
	{P_0} \&\& {P_2} \&\&\& {\underbrace{\phantom{++;} R_1(\lambda) \phantom{++;}}_{\lambda \in k \cup \{\infty\}}} \arrow["{\tau }"' swap,loop right, dashed, color={rgb,255:red,214;green,153;blue,92},looseness=3,start anchor={[xshift=-4ex,yshift=2.5mm]east}, end anchor={[xshift=-4ex, yshift=-1mm]east}]\& {} \& {I_3} \&\& {I_1} \\
	\&\&\&\&\& {} 
	\arrow[shift left=1, from=4-1, to=3-2]
	\arrow[shift left=1, from=3-2, to=4-3]
	\arrow[shift left=1, from=4-3, to=3-4]
	\arrow[shift left=1, from=4-6, to=3-6]
	\arrow[shift left=1, from=4-8, to=3-9]
	\arrow[shift left=1, from=3-9, to=4-10]
	\arrow[shift left=1, from=4-10, to=3-11]
	\arrow[shift right=1, from=4-1, to=3-2]
	\arrow[shift right=1, from=3-2, to=4-3]
	\arrow[shift right=1, from=4-3, to=3-4]
	\arrow[shift right=1, from=4-8, to=3-9]
	\arrow[shift right=1, from=3-9, to=4-10]
	\arrow[shift right=1, from=4-10, to=3-11]
	\arrow[shift left=1, from=3-6, to=2-6]
	\arrow[shift left=1, from=2-6, to=3-6]
	\arrow[shift left=1, from=3-6, to=4-6]
	\arrow["\tau"', color={rgb,255:red,214;green,153;blue,92}, dashed, from=3-4, to=3-2]
	\arrow["\tau"', color={rgb,255:red,214;green,153;blue,92}, dashed, from=4-3, to=4-1]
	\arrow["{\tau }"', color={rgb,255:red,214;green,153;blue,92}, dashed, from=3-11, to=3-9]
	\arrow["\tau"', color={rgb,255:red,214;green,153;blue,92}, dashed, from=4-10, to=4-8]
	\arrow["\cdot \,\, \cdot \,\, \cdot"{description}, shift right=5, draw=none, from=4-8, to=4-7]
	\arrow["\cdot \,\, \cdot \,\, \cdot"{description}, shift right=5, draw=none, from=3-4, to=3-5]
	\arrow["\vdots"{description}, draw=none, from=2-6, to=1-6]
\end{tikzcd}\]
\label{fig:AR(K)}
\end{figure}
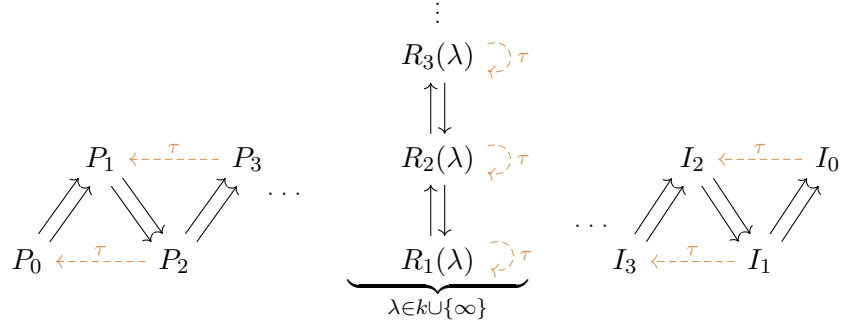
\end{example}

\begin{corollary}\label{cor:tau_isos} 
    For two indecomposable representations $M, N \in \rep \mathsf{K}$ and an integer $k\geq 0$, then
        \begin{enumerate}[(a)]
            \item $\Hom_Q(M,N) \cong \Hom_Q(\tau^k M, \tau^k N)$ if neither $M$ nor $N$ is projective.
            \item $\Hom_Q(M,N) \cong \Hom_Q(\tau^{-k} M, \tau^{-k} N)$ if neither $M$ nor $N$ is injective.
        \end{enumerate}
\end{corollary}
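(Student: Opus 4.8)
The plan is to reduce both parts to the single-step case $k=1$ and then iterate, using throughout that the path algebra $k\mathsf{K}$ is hereditary. For $k=0$ both statements are trivial equalities. For part (a) it therefore suffices to prove
\[\Hom_Q(M,N)\cong\Hom_Q(\tau M,\tau N)\qquad\text{for }M,N\text{ indecomposable and non-projective,}\]
and then apply this identity $k$ times. The one subtlety in the iteration is that each successive translate $\tau^{i}M,\tau^{i}N$ must again be non-projective in order for $\tau^{i+1}$ to be defined; this holds automatically in the regular component (where $\tau$ fixes every $R_n(\lambda)$) and in the preinjective component (where $\tau I_n\cong I_{n+2}$), and on the postprojective side for as long as the translates have not reached $P_0$ or $P_1$. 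Part (b) is the formal dual: replace $\tau$ by $\tau^{-1}$, ``projective'' by ``injective'', and submodules by quotients. So I only describe (a).

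For the base case I would invoke the standard theorem (recorded in \cite{Schiffler:2014aa, DWbook}) that the Auslander--Reiten translation induces an equivalence between the stable category $\underline{\modc}\,k\mathsf{K}$, whose morphisms are module maps modulo those factoring through a projective, and the costable category $\overline{\modc}\,k\mathsf{K}$, whose morphisms are module maps modulo those factoring through an injective. In particular
\[\underline{\Hom}_Q(M,N)\;\cong\;\overline{\Hom}_Q(\tau M,\tau N).\]
It then remains to see that for our indecomposables these stable Hom-spaces coincide with the honest ones: no nonzero morphism $M\to N$ factors through a projective, and no nonzero morphism $\tau M\to\tau N$ factors through an injective.

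The first vanishing uses heredity directly: if $f\colon M\to N$ factors as $M\xrightarrow{g}P\xrightarrow{h}N$ with $P$ projective, then $\im g$ is a submodule of the projective module $P$, hence itself projective, so $g$ factors as $M\twoheadrightarrow\im g\hookrightarrow P$; the epimorphism $M\twoheadrightarrow\im g$ splits, and since $M$ is indecomposable and not projective we must have $\im g=0$, so $f=0$. Dually, since over a hereditary algebra quotients of injectives are injective, no nonzero morphism into an indecomposable non-injective module can factor through an injective; and $\tau N$ is indecomposable (because $N$ is indecomposable non-projective) and non-injective, because the Auslander--Reiten sequence $0\to\tau N\to E\to N\to 0$ would split if $\tau N$ were injective. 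Hence $\overline{\Hom}_Q(\tau M,\tau N)=\Hom_Q(\tau M,\tau N)$, and chaining the three displayed identities gives the $k=1$ case; iterating gives (a).

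I expect the genuinely delicate point to be not any computation but the accounting around well-definedness of the iterated translate — ensuring that $\tau^{i}M,\tau^{i}N$ stay non-projective for $0\le i<k$ — which is why the statement is cleanest in the regular and preinjective components. An alternative route to the $k=1$ case avoids the stable-category input and derives $\Hom_Q(M,N)\cong\Hom_Q(\tau M,\tau N)$ directly from the Auslander--Reiten formula $D\operatorname{Ext}^1_Q(X,Y)\cong\overline{\Hom}_Q(Y,\tau X)$ for hereditary algebras together with the same two vanishing observations, but the bookkeeping is essentially the same.
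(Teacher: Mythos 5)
Your proof is correct. It differs from the paper's in one substantive way: the paper's ``proof'' consists of citing \cite{DWbook} Proposition~6.4.3 for the one-step statement
$\Hom_Q(M,N)\cong\Hom_Q(\tau M,\tau N)$ (for $M,N$ without projective summands) and asserting that the corollary ``follows,'' whereas you actually prove the one-step case from more primitive facts — the stable-category equivalence $\underline{\Hom}(M,N)\cong\overline{\Hom}(\tau M,\tau N)$ together with the two heredity observations (submodules of projectives are projective; quotients of injectives are injective) that let you replace the stable Hom-spaces by honest ones when $M$ is indecomposable non-projective and $\tau N$ is indecomposable non-injective. Your derivation is essentially a proof of the cited proposition for indecomposables, so the routes meet at that lemma; yours is more self-contained, the paper's shorter. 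You are also right to flag the iteration subtlety that the corollary as stated glosses over: to chain the one-step isomorphism $k$ times one needs $\tau^{i}M$ and $\tau^{i}N$ to stay non-projective for $0\le i<k$, which for the Kronecker quiver is automatic in the regular and preinjective components but constrains $k$ when $M$ or $N$ is postprojective ($\tau^i P_m=P_{m-2i}$ eventually hits $P_0$ or $P_1$). The paper's later applications of the corollary respect this implicit restriction, but the statement itself does not make it explicit, and your account is the more careful one. Two small checks on your argument that readers might want spelled out: that $\tau N$ is indecomposable when $N$ is indecomposable non-projective (standard, since $\tau$ is an equivalence on the stable category), and that for the dual vanishing you only need $\tau N$ non-injective, not $\tau M$ — which is exactly what your argument uses, so no gap, just worth noting that the hypotheses are used asymmetrically in the two vanishings.
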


These results follow from the following statements given in \cite{DWbook} Proposition 6.4.3.
    \begin{enumerate}[(i)]
        \item If $M$ and $N$ both have no projective summands then \[\Hom_Q(M,N) \cong \Hom_Q(\tau M, \tau N)\] 
        \item If $M$ and $N$ both have no injective summands, then \[\Hom_Q(M,N) \cong \Hom_Q(\tau^{-1} M,\tau^{-1} N)\]
    \end{enumerate}

\subsection{Derived Categories}\label{sec:derivedcateg}
Next, we collect some definitions and results from \cite{Happel} about triangulated and derived categories.

Let $\mathcal{A}$ be an additive category with an automorphism $T$ with inverse $T^{-1}$. $T$ is called the \textbf{translation functor} and translates an object $X$ to $TX$, sometimes written $X[1]$. The inverse acts by translating $X$ to $X[-1]$. A \textbf{triangulation} of $\mathcal{A}$ is a collection of \textbf{triangles} $X\xto{u} Y \xto{v} Z \xto{w} X[1]$ such that the conditions (TR1)-(TR4) of \cite{Happel} are satisfied. The category $\mathcal{A}$ is then called a \textbf{triangulated category}. For convenience, we list (TR1) and (TR2) below.

\begin{definition}\label{def:triangles}  
Let $X,Y,Z$ be objects in $\mathcal{A}$ and $u:X\to Y, v:Y \to Z, w:Z\to X[1]$ be morphisms between them. 

    \begin{itemize}      
        \item[(TR1)] Every triangle isomorphic to a triangle is a triangle. 
        Every morphism $u: X \rightarrow Y$ in $\mathcal{A}$, can be embedded into a triangle $X \xrightarrow{u} Y \xrightarrow{v} Z \xrightarrow{w} X[1]$. 
        The sextuple $X \xrightarrow{\mathbbm{1}_X} X \xrightarrow{0} 0 \xrightarrow{0} X[1]$ is a triangle. 
        
        \item[(TR2)] If $X \xrightarrow{u} Y \xrightarrow{v} Z \xrightarrow{w} X[1]$ is a triangle, then $Y \xrightarrow{v} Z \xrightarrow{w} X[1] \xrightarrow{-u[1]} Y[1]$ is a triangle.  
    \end{itemize}
\end{definition}

 We remark that \cite{Happel} also tells us that the converse of (TR2) is true. 

\begin{definition}\label{def:complex}
    Let $\mathcal{C}$ be an abelian category.  A \textbf{(differential) complex} over $\mathcal{C}$ is $X^{\bullet} = (X^i,d_X^i)_{i\in \Z}$, a collection of objects $X^i$ of $\mathcal{C}$ and morphisms $d_X^i:X^i\to X^{i+1}$ (called differentials) such that $d_X^i d_X^{i+1} = 0$. The complex is called a \textbf{stalk complex} if there exists an $i_0$ such that $X^{i_0} \neq 0$ and $X^i = 0$ for all $i \neq i_0$. Then $X^{i_0}$ is called a \textbf{stalk} and we say that the complex is concentrated in degree $i_0$.

    The complex is \textbf{bounded} if $X^i = 0$ for all but finitely many $i < 0$ and for all but finitely many $i > 0$.
\end{definition}

\begin{definition}
    For two complexes $X^{\bullet}=(X^i,d_X^i), Y^{\bullet}=(Y^i,d_Y^i)$, a morphism between them is $f^{\bullet}$, given by a sequence of morphisms $f^i:X^i\to Y^i$ such that $d_X^i f^{i+1} = f^i d_Y^i$ for all $i \in \Z$. 
\end{definition}

\begin{definition}\label{def:cone}
    Define the \textbf{(mapping) cone} $C_f^{\bullet}$ of a morphism $f^{\bullet}:X^{\bullet} \to Y^{\bullet}$ is the complex \[C_f^{\bullet} = ((X[1]^{\bullet})^i \oplus Y^i, d_{C_f}^i)\] with differential
        \[X^{i+1}\oplus Y^i \xto{\begin{bmatrix}-d_X^{i+1} & f^{i+1} \\ 0 & d_Y^i \end{bmatrix}} X^{i+1}\oplus Y^{i+1}\]
\end{definition}

The mapping cone embeds the morphism $f^{\bullet}$ into a triangle 
    \[X^{\bullet} \xto{f^{\bullet}} Y^{\bullet} \to C_f^{\bullet} \to X[1]^{\bullet}.\] 
Applying the converse of (TR2) to this triangle, we get another triangle 
    \[C_f[-1]^{\bullet} \to X^{\bullet} \xto{f^{\bullet}} Y^{\bullet} \to C_f^{\bullet} .\] 
We refer to $C_f[-1]^{\bullet}$ as the \textbf{co-cone} of the morphism $f$ and, following \cite{Sanders}, we use the co-cone as a ``kernel" of the morphism $f$. In fact, for any triangle $$X^{\bullet} \xto{f^{\bullet}} Y^{\bullet} \to Z^{\bullet} \to X[1]^{\bullet},$$ we will use $Z[-1]^{\bullet}$ as the co-cone.

If $X, Y$ are stalk complexes (i.e. $X,Y \in \mathcal{C}$), we note that the mapping cone $C_f$ of a morphism $f:X\to Y$ is only nonzero in degrees $-1$ and $0$ with $C_f^{-1} = X, C_f^0=Y$ and $d_{C_f}^{-1} = \begin{bmatrix}0 & f\\ 0 & 0\end{bmatrix} \cong f$.

\begin{definition}\label{def:derivedcateg}
    For $\mathcal{C}$ an abelian category, the \textbf{bounded derived category} $D^b(\mathcal{C})$ is the category of bounded complexes over $\mathcal{C}$ with quasi-isomorphism inverted (see \cite{Happel} for details).
\end{definition}

It turns out that $D^b(\mathcal{C})$ is a triangulated category
and we can embed $\mathcal{C}$ in $D^b(\mathcal{C})$ by sending each object $X \in \mathcal{C}$ to a stalk complex concentrated at $i_0 = 0$, with $X^0 = X$ and $X^i=0$ for all other $i$.
Whenever there is no confusion, we will write $X$ to mean the stalk complex $X^{\bullet}$ concentrated in degree $0$.

When $\mathcal{C}=\rep_k (A)$ for a finite-dimensonal algebra $A$, \cite{Happel} tells us that we have a special triangle, referred to as an Auslander-Reiten triangle given by 
    \[\nu X[-1] \to Y \to X \to \nu X\]
for $X$ indecomposable. Auslander-Reiten triangles are related to Auslander-Reiten sequences, almost-split exact sequences of the form 
    \[0 \to X \to Y \to Z \to 0\]
where $X, Z$ are indecomposable and $X \cong \tau Z$. The reader may refer to the source for a more in-depth treatment. Most importantly, the triangle above being an Auslander-Reiten triangle tells us that \[\tau X \cong \nu X[-1]\] in the derived category. Therefore, the Auslander-Reiten translation is a composition of two auto-equivalences $\nu, T$ on the derived category and is thus an autoequivalence itself. Therefore, we get the following, stated in the next lemma for convenience:

\begin{lemma}\label{lem:tau_autoequiv}
    The Auslander-Reiten translation $\tau$ is an autoequivalence on the derived category $D^b(\mathcal{C})$ and preserves triangles.
\end{lemma}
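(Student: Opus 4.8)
The plan is to realize $\tau$ on the bounded derived category as a composite of two functors that are already known to be exact autoequivalences, and then invoke the formal fact that composites and inverses of exact autoequivalences of a triangulated category are again exact autoequivalences. Concretely, as recorded in the discussion preceding the lemma, the Auslander--Reiten triangle $\nu X[-1]\to Y\to X\to \nu X$ identifies the derived Auslander--Reiten translate of an indecomposable $X$ with $\nu X[-1]$; since $\mathcal C=\rep_k(Q)$ is hereditary, every object of $D^b(\mathcal C)$ is isomorphic to the direct sum of the shifts of its cohomology objects, so this identification propagates to an isomorphism of endofunctors
\[\tau \;\cong\; \nu \circ T^{-1}\]
on $D^b(\mathcal C)$, a statement established in \cite{Happel}.

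Next I would check that each factor is an exact autoequivalence. The translation functor $T$ is, by the very definition of a triangulated category, an automorphism of $D^b(\mathcal C)$, and by (TR2) together with its converse both $T$ and $T^{-1}$ send triangles to triangles; hence $T^{-1}$ is an exact autoequivalence. For the Nakayama functor, recall $A=kQ$ has finite global dimension (at most $1$), so $DA$ has finite projective dimension and the derived functor $\nu = -\otimes^{\mathbf L}_A DA$ is defined on all of $D^b(\mathcal C)$; by \cite{Happel} it is an exact equivalence, with quasi-inverse $\nu^{-1}=\mathbf R\Hom_A(DA,-)$, refining the elementary equivalence $\mathrm{proj}(Q)\to\mathrm{inj}(Q)$ already mentioned. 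Consequently $\tau\cong\nu\circ T^{-1}$ is a composite of exact autoequivalences, so it is an autoequivalence of $D^b(\mathcal C)$, and being exact it carries triangles to triangles, which is the second assertion of the lemma.

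The point requiring the most care is exactly the passage from the object-wise identification $\tau X\cong \nu X[-1]$ to a functorial isomorphism $\tau\cong\nu\circ T^{-1}$, and, relatedly, the fact that $\nu$ is well-defined and invertible on the whole of $D^b(\mathcal C)$ rather than merely on stalk complexes of projective or injective representations where its combinatorial description lives. Both of these are part of Happel's treatment of derived categories of hereditary (more generally, finite global dimension) algebras, so in the write-up I would cite the relevant statements of \cite{Happel} rather than reprove them; the remainder of the argument is the purely formal observation, valid in any triangulated category, that exact autoequivalences are closed under composition and inversion.
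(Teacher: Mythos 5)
Your proof takes essentially the same approach as the paper's argument, which appears in the discussion immediately preceding the lemma: identify $\tau \cong \nu\circ T^{-1}$ on $D^b(\mathcal C)$ via the Auslander--Reiten triangle identity $\tau X\cong \nu X[-1]$, and then conclude that $\tau$ is an exact autoequivalence because it is a composite of the exact autoequivalences $\nu$ and $T^{-1}$. You flesh out the details the paper elides---upgrading the object-wise isomorphism to a natural isomorphism of functors, and verifying that the derived Nakayama functor is defined and invertible on all of $D^b(\mathcal C)$ because $kQ$ has finite global dimension---which is precisely where the care is needed.
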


 In general, we may associate the short exact sequence \[0 \to X \xto{u} Y \xto{v}  Z \to 0\] with the triangle \[X \xto{u} Y \xto{v} Z \xto{w} X[1]\] for some object $Z$ and some morphism $w:Z \to X[1]$.

\section{Proof of the Main Theorem}

To begin, let us recall two theorems by Auslander and Bongartz \cite{Bongartz} which provide criteria for distinguishing isomorphism classes of certain modules over a ring. 

\begin{theorem} Let $R$ be a commutative ring and $\mathcal{A}$ an abelian $R$-linear category such that each morphism set in $\mathcal{A}$ has finite length as an $R$-module. Let $\mathcal{C}$ be a full subcategory of $\mathcal{A}$ closed under direct sums and kernels. Then two objects $M$ and $N$ of $\mathcal{C}$ are isomorphic if and only if the lengths of $\Hom(M,X)$ and $\Hom(N,X)$ as $R$-modules coincide for all $X$ in $\mathcal{C}$. 
\end{theorem}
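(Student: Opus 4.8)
The forward direction is formal: an isomorphism $M\xrightarrow{\sim}N$ induces, for each $X$, an $R$-module isomorphism $\Hom(N,X)\xrightarrow{\sim}\Hom(M,X)$ by precomposition, so the two have equal length. All the content is in the converse, which I would organize as follows. Since every $\Hom$-set has finite $R$-length, every endomorphism ring $\mathrm{End}(Y)$ in sight is left Artinian (its left ideals are $R$-submodules), hence semiperfect; as $\mathcal A$ is abelian, idempotents split, so every object of $\mathcal A$ decomposes into a finite direct sum of indecomposables with local endomorphism rings and Krull--Remak--Schmidt uniqueness holds. Observe also that $\mathcal C$, being closed under kernels, is closed under direct summands (a summand is the kernel of the complementary idempotent). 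Using additivity of $\ell_R\Hom(-,X)$ on direct sums, I then split off the common part of $M$ and $N$ and reduce to the case where they have \emph{no} indecomposable summand in common; in that case the claim becomes $M=N=0$.

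For the reduced problem the engine is the evaluation morphism of Definition~\ref{def:evalmorph}, in its version over a general base: choosing $R$-module generators $f_1,\dots,f_r$ of $V:=\Hom(M,N)$, I form $\mathrm{ev}\colon M^{\oplus r}\to N$, $(m_i)_i\mapsto\sum_i f_i(m_i)$. Its source lies in $\mathcal C$, hence so does $K:=\ker(\mathrm{ev})$ — here closure under kernels is essential. By construction $\Hom(M,\mathrm{ev})\colon\Hom(M,M^{\oplus r})\to\Hom(M,N)$ is surjective, since $f=\sum_i\lambda_i f_i$ equals $\mathrm{ev}$ precomposed with $(\lambda_i\,\mathrm{id}_M)_i$. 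Applying the left exact functors $\Hom(M,-)$ and $\Hom(N,-)$ to $0\to K\to M^{\oplus r}\xrightarrow{\mathrm{ev}}N$, the first yields an exact sequence $0\to\Hom(M,K)\to\mathrm{End}(M)^{r}\to\Hom(M,N)\to 0$, whence $r\,\ell_R\mathrm{End}(M)=\ell_R\Hom(M,K)+\ell_R\Hom(M,N)$; the second yields $0\to\Hom(N,K)\to\Hom(N,M)^{r}\to T\to 0$ with $T:=\im\big(\Hom(N,\mathrm{ev})\big)\subseteq\mathrm{End}(N)$, whence $r\,\ell_R\Hom(N,M)=\ell_R\Hom(N,K)+\ell_R(T)$.

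Now I feed in the hypothesis at the three test objects $X\in\{M,K,N\}$, all of which lie in $\mathcal C$. The identities $\ell_R\Hom(N,M)=\ell_R\mathrm{End}(M)$, $\ell_R\Hom(N,K)=\ell_R\Hom(M,K)$ and $\ell_R\Hom(M,N)=\ell_R\mathrm{End}(N)$ turn the second length equation into $r\,\ell_R\mathrm{End}(M)=\ell_R\Hom(M,K)+\ell_R(T)$, which combined with the first forces $\ell_R(T)=\ell_R\Hom(M,N)=\ell_R\mathrm{End}(N)$. Since $T$ is a submodule of the finite-length module $\mathrm{End}(N)$ of the same length, $T=\mathrm{End}(N)$; in particular $\mathrm{id}_N\in\im\big(\Hom(N,\mathrm{ev})\big)$, so $\mathrm{ev}$ is a split epimorphism and $N$ is a direct summand of $M^{\oplus r}$. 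By Krull--Remak--Schmidt, the indecomposable summands of $N$ are then among those of $M$; since $M$ and $N$ share no summand, $N$ has none at all, i.e.\ $N=0$. The hypothesis at $X=M$ now gives $\ell_R\mathrm{End}(M)=\ell_R\Hom(N,M)=\ell_R\Hom(0,M)=0$, so $\mathrm{id}_M=0$ and $M=0$; unwinding the reduction gives $M\cong N$ in general.

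The step I expect to be genuinely delicate is the passage from purely numerical data to an honest splitting of $\mathrm{ev}$: this succeeds only because the finiteness of all $\Hom$-lengths lets ``same length'' be upgraded to ``equal submodule'', and because closure under kernels puts the auxiliary object $K$ — needed exactly to cancel the $\Hom(-,K)$ terms in the two length equations — back inside the class on which the hypothesis is available. A companion subtlety is that $\mathcal C$ need not be closed under cokernels, so the argument must be arranged using only $\Hom$ \emph{out of} objects already known to lie in $\mathcal C$; this is why it runs through $K=\ker(\mathrm{ev})$ and never through $\coker(\mathrm{ev})$ or $\im(\mathrm{ev})$. (In the representation-theoretic situation this also delivers the quantitative sharpening of Corollary~\ref{cor:specialC}: the only test objects the proof ever uses are $M$, $N$ and $K=\ker(\mathrm{ev})$, and $\dim K\le(\dim M)\cdot\dim\Hom(M,N)\le(\dim M)^2\dim N$.)
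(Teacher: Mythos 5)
Your proof is correct and runs on the same engine as the paper's (Bongartz's original argument): choose generators of $\Hom(M,N)$, form the evaluation morphism $\mathrm{ev}\colon M^{\oplus r}\to N$ with kernel $K\in\mathcal C$, apply $\Hom(M,-)$ and $\Hom(N,-)$ and compare lengths to force $\mathrm{ev}$ to be a split epimorphism, then invoke Krull--Remak--Schmidt. The only difference is organizational: you split off the entire common part of $M$ and $N$ at the outset and argue the remainders must both vanish, whereas the paper inducts on $\ell_R\Hom(N,N)$ and peels off one common indecomposable summand per step; your arrangement also makes explicit that the only test objects ever used are $M$, $N$, and $K$, which is precisely the observation the paper isolates afterward to obtain Corollary~\ref{cor:specialC}.
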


\begin{proof}
    Let $M$ and $n$ be given in $\mathcal{C}$ such that for all $X$ in $\mathcal{C}$ the lengths of $\Hom(M,X)$ and $\Hom(N,X)$ are equal. First of all, we claim that $M$ and $N$ have a nonzero direct summand in common. To provide this we take generators $f_1,f_2,\ldots,f_n$ of $\Hom(M,N)$ as an $R$-module and we look at the map $f:M^n \to N$ given by 
        \[f(m_1,m_2,\ldots,m_n)=f_1(m_1) + f_2(m_2) + \ldots + f_n(m_n).\]
    Clearly, the kernel $K$ of $f$ belongs to $\mathcal{C}$. By construction, the exact sequence 
        \[0 \to K \to M^n \to N\]
    induces an exact sequence
        \[0 \to \Hom(M,K) \to \Hom(M,M^n),\to \Hom(M,N) \to 0.\]
    Counting lengths, we see that the induced sequence
        \[0 \to \Hom(N,K) \to \Hom(N,M^n),\to \Hom(N,N) \to 0.\]
    has to be exact too. Therefore, $f$ is a retraction, when $N$ is a direct summand of $M^n$. It follows easily from our assumptions on $\mathcal{A}$ and $\mathcal{C}$, that any object in both categories is a finite direct sum of indecomposable objects having local endomorphism rings. Hence the Krull-Remak-Schmidt theorem holds in $\mathcal{A}$ and $\mathcal{C}$. In particular, $N$ and $M$ have a common indecomposable direct summand because $N$ is a direct summand of $M^n$.

    The proof of the theorem proceeds by induction on the length of $\Hom(N,N)$, the case $\Hom(N,N)=0$ being trivial. By the argument given before, we may write $M=U\oplus M'$ and $N=U\oplus N'$ with some objects $U,M'$ and $N'$ in $\mathcal{C}$ satisfying $U\neq 0$. Of course, the lengths of $\Hom(M',X)$ and $\Hom(N',X)$ still coincide for all $X$ in $\mathcal{C}$. Since the length of $\Hom(N',N')$ is strictly smaller than the length of $\Hom(N,N)$, we get by induction that $M'$ and $N'$ are isomorphic, whence the same is true for $M$ and $N$.
\end{proof}

We will extract the map $f$ defined in the proof above and adapt it to the situation of quiver representations. 

\begin{definition}\label{def:evalmorph}
    We define the \textbf{evaluation morphism} $\ev[M][N]:M^n \rightarrow N$ for two representations $M$ and $N$ in $\rep_{k} Q$ for a quiver $Q$ and a field $k$ as the following map
    
    \[\ev[M][N](m_1,m_2,\ldots,m_n)=f_1(m_1) + f_2(m_2) + \ldots + f_n(m_n).\]
\end{definition}

 In the setting of quiver representations, we replace the commutative ring $R$ with the field $k$ and take $\mathcal{A}$ to be $\rep_{k} Q$ which is an abelian $k$-linear category. When the basis of $\Hom_Q(M,N)$ is not important, we write 
    \[\ev[M][N]: M \otimes_{k} \Hom_{Q}(M,N) \to N.\]
The following refines Bongartz's in two ways: by noticing that Bongartz's proof only requires that the kernels of $\ev[M][N]$ are in $\mathcal{C}$ and by providing a bound on $\dim X$.

\begin{corollary}\label{cor:specialC}
    Let $k$ be a field and let $\mathcal{C}$ be a full subcategory of a $\rep_{k} Q$ which is closed under direct sums and contains $\ker(\ev[M][N])$ for all $M,N$ in $\mathcal{C}$. Then, $M,N \in \mathcal{C}$ are isomorphic if and only if 
	\[\dim_{k} \Hom_{Q}(M,X) = \dim_{k} \Hom_{Q}(N,X),\]
    for all $X \in \mathcal{C}$ with $\dim X \leq (\dim M)^2 \dim N$.
\end{corollary}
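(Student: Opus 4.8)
The ``only if'' direction is immediate: an isomorphism $M \cong N$ induces $k$-linear isomorphisms $\Hom_Q(N,X) \cong \Hom_Q(M,X)$ for every $X$. For the converse the plan is to run the proof of Bongartz's theorem recalled above, with $R = k$ and $\mathcal{A} = \rep_{k} Q$, while tracking two things: that the only subobject the argument constructs is the kernel of an evaluation morphism, and that the finitely many objects $X$ against which it tests $M$ and $N$ all have dimension at most $(\dim M)^2 \dim N$. First I would dispose of the trivial cases $M = 0$ or $N = 0$ (testing against $X = N$, respectively $X = M$), and assume henceforth $M, N \neq 0$.

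The construction is Definition \ref{def:evalmorph}: fix a $k$-basis $f_1, \dots, f_n$ of $\Hom_Q(M,N)$, so that $n = \dim_{k} \Hom_Q(M,N)$, and form $\ev[M][N] : M^n \to N$ with kernel $K \coloneqq \ker \ev[M][N]$. By hypothesis $K \in \mathcal{C}$, while $M^n \in \mathcal{C}$ by closure under direct sums and $N \in \mathcal{C}$. The quantitative input that produces the bound is the estimate $n \le \dim M \cdot \dim N$, which holds because $\Hom_Q(M,N)$ embeds into $\bigoplus_{x \in Q_0} \Hom_{k}(M_x, N_x)$, a space of dimension $\sum_{x \in Q_0} (\dim M_x)(\dim N_x) \le (\dim M)(\dim N)$. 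Hence $\dim M^n = n \dim M \le (\dim M)^2 \dim N$, $\dim K = n \dim M - \dim N \le (\dim M)^2 \dim N$, and $\dim N \le (\dim M)^2 \dim N$; so the hypothesis $\dim_{k} \Hom_Q(M,X) = \dim_{k} \Hom_Q(N,X)$ is available for each of $X = K,\, M^n,\, N$, which are precisely the objects the next step requires.

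Next I would reproduce Bongartz's counting. Since the $f_i$ span $\Hom_Q(M,N)$, post-composition with $\ev[M][N]$ makes $\Hom_Q(M,M^n) \to \Hom_Q(M,N)$ surjective, so $0 \to \Hom_Q(M,K) \to \Hom_Q(M,M^n) \to \Hom_Q(M,N) \to 0$ is exact; applying the left-exact functor $\Hom_Q(N,-)$ to $0 \to K \to M^n \xrightarrow{\ev[M][N]} N$ gives exactness of $0 \to \Hom_Q(N,K) \to \Hom_Q(N,M^n) \to \Hom_Q(N,N)$. Feeding the three dimension equalities into these sequences forces $\Hom_Q(N,M^n) \to \Hom_Q(N,N)$ to be surjective, so a preimage of $\mathbbm{1}_N$ splits $\ev[M][N]$ and $N$ is a direct summand of $M^n$. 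Since every indecomposable summand of $M^n$ is an indecomposable summand of $M$ (Krull--Remak--Schmidt in $\rep_{k} Q$, where endomorphism rings of indecomposables are local), every indecomposable summand of $N$ is an indecomposable summand of $M$; as $N \neq 0$, we obtain a common nonzero indecomposable summand $U$ and write $M = U \oplus M'$, $N = U \oplus N'$.

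To finish I would induct on $\dim_{k} \Hom_Q(N,N)$, the base case $\Hom_Q(N,N) = 0$ (that is, $N = 0$) being among the trivial cases above. Subtracting $\dim_{k} \Hom_Q(U,X)$ from both sides gives $\dim_{k} \Hom_Q(M',X) = \dim_{k} \Hom_Q(N',X)$ for all $X \in \mathcal{C}$ with $\dim X \le (\dim M')^2 \dim N' \le (\dim M)^2 \dim N$, while $\dim_{k} \Hom_Q(N',N') < \dim_{k} \Hom_Q(N,N)$ because $U \neq 0$; the inductive hypothesis then yields $M' \cong N'$, hence $M \cong N$. The one point I expect to need genuine care --- the only place where passing from Bongartz's hypotheses to the ones stated here is not purely formal --- is the legitimacy of this last step: applying the inductive hypothesis to $(M',N')$ presupposes $M', N' \in \mathcal{C}$, so that the data needed to rerun the argument for this pair (namely $(M')^{n'}$ for $n' = \dim_{k}\Hom_Q(M',N')$, together with $\ker\ev[M'][N']$) again lie in $\mathcal{C}$. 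Under Bongartz's stronger assumption this is automatic, since a direct summand is the kernel of an idempotent endomorphism and $\mathcal{C}$ is closed under all kernels; under the weaker assumption ``closed under kernels of evaluation morphisms'' one must either record the mild, and in the examples of interest harmless, additional hypothesis that $\mathcal{C}$ be closed under direct summands, or argue directly that the summands produced by the splitting of $\ev[M][N]$ can be chosen inside $\mathcal{C}$. I would make sure this bookkeeping is airtight before asserting the full statement.
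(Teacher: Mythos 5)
Your proof follows the same approach the paper has in mind: run Bongartz's argument with $R = k$, $\mathcal{A} = \rep_k Q$, observe that the only kernel the argument constructs is $\ker\ev[M][N]$, and bound the dimensions of the three objects $K = \ker\ev[M][N]$, $M^n$, and $N$ that the counting step tests against by $(\dim M)^2\dim N$. You carry out the splitting, the dimension bound, and the counting correctly, and the paper's own justification for the corollary is nothing more than this, packaged as two short remarks rather than a displayed proof. The closure issue you raise at the end is, however, not merely a point ``to be careful about''; it is a genuine gap in the argument as it stands. Bongartz's inductive step writes $M = U \oplus M'$, $N = U \oplus N'$ with $U \neq 0$ and then invokes the statement for the pair $(M', N')$; this presupposes $M', N' \in \mathcal{C}$. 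Under Bongartz's hypothesis (closure under all kernels) this is automatic, because a direct summand is the kernel of an idempotent endomorphism. Under the weakened hypothesis here --- closure only under kernels of evaluation morphisms between objects \emph{of $\mathcal{C}$} --- there is no mechanism for producing $\ker\ev[M'][N']$ inside $\mathcal{C}$ without already knowing $M', N' \in \mathcal{C}$, and the splitting of $\ev[M][N]$ does not obviously furnish complements inside $\mathcal{C}$ either. So the corollary as written needs either the extra hypothesis that $\mathcal{C}$ is closed under direct summands (harmless in all the paper's examples, since each $\mathcal{C}_{\mathcal{X}}$ in Proposition~\ref{conj:specialC_Kronecker} is of the form $\operatorname{add}$ of a set of indecomposables and hence is closed under summands) or a different argument replacing the induction. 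You should record that hypothesis explicitly rather than leaving it as a ``to be checked'' note.

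One smaller point: your disposal of the degenerate cases does not go through under the stated bound. If $M = 0$ and $N \ne 0$, then $(\dim M)^2 \dim N = 0$, so the only permissible test object is $X = 0$; you are not entitled to take $X = N$. The hypothesis then holds vacuously while the conclusion $M \cong N$ is false, so the ``if'' direction of the corollary actually fails for the pair $(0,N)$, and symmetrically for $(M,0)$. The corollary therefore also implicitly assumes $M, N \ne 0$ (or needs a bound like $\max\{\dim M, \dim N, (\dim M)^2 \dim N\}$); that should be noted alongside the closure hypothesis.
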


Categories $\mathcal{C}$ which satisfy the hypotheses of Corollary~\ref{cor:specialC}, will be referred to as \textbf{completely-decomposable} subcategories of $\rep_{k} Q$. 

For the first part of the corollary, we note that following Bongartz's proof only requires $\ker \ev[M][N]$ rather than $\ker f: M\to N$ for all $M, N$ in $\mathcal{C}$. In the setting of quiver representations, requiring only the kernels of the evaluation morphism rather than all kernels does indeed reduce the number of objects required to be in $\mathcal{C}$. 

\begin{example}
For example, consider the Kronecker quiver $Q=\mathsf{K}$ and let $\mathcal{C}$ be the category of representations which are direct sums of preinjective indecomposable representation $I_m$, $m \geq 0$. Let us calculate the kernel of the map $f:I_1 \to I_0$ where $f$ is given by the pair of maps $(\begin{bsmallmatrix} \lambda & -1\end{bsmallmatrix}, 0)$ for some $\lambda \in k$.     

The kernel of this map is the representation $R_1(\lambda)$, because one can see that the dimension vector of the kernel must be $(1,1)$ with maps $\lambda$ and $1$ over the arrows. Thus, requiring this kernel in $\mathcal{C}$ would require $R_1(\lambda)$.  On the other hand, the kernel of the evaluation morphism between $I_1$ and $I_0$ is $I_2$, which is already in the category we described initially. As we will see in the next section, it is true for the Kronecker quiver that the kernels of the evaluation morphism between indecomposables in the same component of the $AR$-quiver stay within that component.
\end{example}

Secondly, we highlight the fact that we only need to calculate $\dim_{k} \Hom_{Q}(M,X)$ for $X = \ker \ev[M][N]$, $M$, and $N$ in Bongartz's proof. Since $\ker \ev[M][N] \subseteq M \otimes \Hom(M,N)$ and $\Hom(M,N)$ has dimension bounded by $(\dim M)(\dim N)$, we can bound the dimension of the kernel by $(\dim M)^2\dim N$. It is clear that $M$ and $N$ also satisfy this bound.

A natural problem is then to find subcategories $\mathcal{C}$ which satisfy the hypotheses of the theorem. I am particularly interested in the case that $\mathcal{A}$ is a quotient of a quiver path algebra, and $\mathcal{C}$ is a subcategory of modules which naturally arise in persistence theory.
Since little is known about properties of $\ev[M][N]$, there is much work to be done in identifying such $\mathcal{C}$. We will provide examples of such a subcategory for the Kronecker quiver.

\subsection{The evaluation morphism and the Auslander-Reiten \mbox{translation}}
The main theorem of this section describes how the Auslander-Reiten translation $\tau$ (Definition~\ref{def:tau}) interacts with evaluation morphisms. It was first discovered through observations with the Kronecker quiver test case which will prove significantly useful in generalizing Corollary~\ref{cor:specialC} to other quivers. 

\begin{proposition}\label{prop:tau_f}
    Given two representations $M,N$ of a quiver $Q$ and morphism $f:M \to N$, we have
    \begin{enumerate}[(a)]
    \item The AR-translate $\tau M$ is a subset of the space $\bigoplus\limits_{a\in Q_1} I(ta)\otimes M_{sa}$
    \item The AR-translate of $f$ can be written as \begin{equation}\tau f = \bigoplus_{a\in Q_1} \mathbbm{1}_{I(ta)} \otimes \, f_{sa} \Big|_{\tau M}\, ,\end{equation}
        which is the restriction of the map
            \begin{equation}\bigoplus_{a\in Q_1} \mathbbm{1}_{I(ta)} \otimes \, f_{sa}: \bigoplus_{a\in Q_1} I(ta) \otimes M(sa) \to \bigoplus_{a\in Q_1} I(ta) \otimes N(sa)\end{equation} to $\tau M$.
    \end{enumerate}
\end{proposition}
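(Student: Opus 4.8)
The plan is to read off both statements from the standard projective resolution of $M$ (Proposition~\ref{prop:stdres}) after applying the Nakayama functor $\nu$, rather than from a minimal projective resolution. Write the standard resolution as
\[
0 \to P_1^M \xrightarrow{\,u_M\,} P_0^M \xrightarrow{\,v_M\,} M \to 0,\qquad P_1^M = \bigoplus_{a\in Q_1} P(ta)\otimes_k M_{sa},\quad P_0^M = \bigoplus_{x\in Q_0} P(x)\otimes_k M_x ,
\]
both terms being projective since $P(x)\otimes_k V$ is a direct sum of $\dim_k V$ copies of the projective $P(x)$. Since $\nu$ is $k$-linear and $\nu P(x)=I(x)$, applying it yields the exact sequence
\[
0\to \ker(\nu u_M)\to \bigoplus_{a\in Q_1} I(ta)\otimes_k M_{sa}\xrightarrow{\,\nu u_M\,}\bigoplus_{x\in Q_0} I(x)\otimes_k M_x\xrightarrow{\,\nu v_M\,}\nu M\to 0 .
\]
First I would argue that $\ker(\nu u_M)$ represents $\tau M$: a projective presentation need not be minimal, but it is the minimal presentation plus a contractible two-term complex of projectives $C_1\xrightarrow{\sim}C_0$, and since $\nu$ restricts to an equivalence $\text{proj}(Q)\to\text{inj}(Q)$ it carries this summand to an isomorphism $\nu C_1\xrightarrow{\sim}\nu C_0$, which contributes nothing to the kernel. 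Hence $\ker(\nu u_M)\cong \tau M$ in the sense of Definition~\ref{def:tau}, and this is exactly part~(a): $\tau M$ is (isomorphic to) the subrepresentation $\ker(\nu u_M)$ of $\bigoplus_{a} I(ta)\otimes_k M_{sa}$.

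For part~(b), the essential point is that the standard projective resolution is \emph{functorial} in $M$. A morphism $f\colon M\to N$ of representations induces a morphism of resolutions whose component $P_0^M\to P_0^N$ is $\bigoplus_x \mathbbm{1}_{P(x)}\otimes f_x$ and whose component $P_1^M\to P_1^N$ is $\bigoplus_a \mathbbm{1}_{P(ta)}\otimes f_{sa}$; I would verify that this is a chain map by a short computation on elementary tensors, where compatibility with $v$ uses only that $f$ is $kQ$-linear and compatibility with $u$ additionally uses $f_{ta}\,\varphi_a^M=\varphi_a^N\,f_{sa}$ for each arrow $a$. Now $\tau f$ is obtained by lifting $f$ to the projective resolutions, applying $\nu$, and restricting to the kernels of the first differentials, and this output is independent of the chosen lift: two lifts of $f$ differ by a map factoring through $u_M$, hence through $\nu u_M$ after applying $\nu$, which vanishes on $\ker(\nu u_M)$. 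Because $\nu$ is $k$-linear we have $\nu(\mathbbm{1}_{P(ta)}\otimes f_{sa})=\mathbbm{1}_{I(ta)}\otimes f_{sa}$, and therefore
\[
\tau f = \Big(\bigoplus_{a\in Q_1}\mathbbm{1}_{I(ta)}\otimes f_{sa}\Big)\Big|_{\tau M},
\]
which is precisely the claimed restriction of $\bigoplus_a \mathbbm{1}_{I(ta)}\otimes f_{sa}\colon \bigoplus_a I(ta)\otimes M_{sa}\to\bigoplus_a I(ta)\otimes N_{sa}$ to $\tau M$.

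I expect the main obstacle to be the non-minimality of the standard projective resolution: one has to be confident that passing from a minimal presentation to the standard one does not change $\ker(\nu u_M)$ up to isomorphism, which is exactly the ``minimal $\oplus$ contractible'' decomposition together with the fact that $\nu$ sends a split-exact complex of projectives to a split-exact complex of injectives. Everything else — the chain-map verification in part~(b), the independence of $\tau f$ from the chosen lift, and the identity $\nu(\mathbbm{1}_{P(ta)}\otimes f_{sa})=\mathbbm{1}_{I(ta)}\otimes f_{sa}$ — is a routine unwinding of definitions once the tensor-over-$k$ bookkeeping is set up.
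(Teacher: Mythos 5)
Your proof follows essentially the same route as the paper's: take the standard projective resolution of Proposition~\ref{prop:stdres}, exhibit the explicit lifts $\bigoplus_x \mathbbm{1}_{P(x)}\otimes f_x$ and $\bigoplus_a \mathbbm{1}_{P(ta)}\otimes f_{sa}$ of $f$, verify the two commutativity relations on elementary tensors, apply $\nu$ using $\nu P(x)=I(x)$, and read off $\tau f$ as the restriction of $\nu f''$ to the kernel. The one genuine difference is that you explicitly address a gap the paper leaves tacit: Definition~\ref{def:tau} builds $\tau M$ from a \emph{minimal} projective presentation, while the standard resolution is generally not minimal, so one must know that $\ker(\nu u_M)$ computed from the standard resolution is still isomorphic to $\tau M$. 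Your ``minimal $\oplus$ contractible'' argument, together with the observation that $\nu$ restricted to projectives is an equivalence and hence carries a split-exact two-term complex of projectives to a split-exact two-term complex of injectives, is exactly the right justification, and you similarly note the independence of $\tau f$ from the chosen lift. Both points are worth keeping; they make the argument complete where the paper's version implicitly assumes the standard resolution may be substituted for a minimal one without comment.
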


\begin{proof}
We begin by taking the standard projective resolutions for $M, N$ as in Proposition~\ref{prop:stdres}, with 
    \[u_M(a \otimes m) \coloneqq a\alpha \otimes m - a \otimes \alpha m, \quad a\in P(t(\alpha)), m \in e_{s(\alpha)}M\] and \[v_M(a \otimes m) \coloneqq am, \quad a\in P(x), m\in e_x M,\] and similarly for $N$.
We connect the two resolutions by $f$ and lifts 
    \[f' = \bigoplus_{x\in Q_0} \mathbbm{1}_{P(x)} \otimes f_x \quad \text{and} \quad f'' = \bigoplus_{a \in Q_1} \mathbbm{1}_{P(ta)} \otimes f_{sa}.\]

The lifts $f'$ and $f''$ are defined by the commutativity of the following diagram:
\begin{equation}\begin{tikzcd}[ampersand replacement=\&]
	M \&\& {\bigoplus_{x\in Q_0} \, P(x) \otimes M_x} \&\& {\bigoplus_{a\in Q_1} \, P(ta) \otimes M_{sa}} \\
	\\
	\\
	N \&\& {\bigoplus_{x\in Q_0} \, P(x) \otimes N_x} \&\& {\bigoplus_{a\in Q_1} \, P(ta) \otimes N_{sa}}
	\arrow["{f \,}"', dashed, from=1-1, to=4-1]
	\arrow["{u_M}"', from=1-5, to=1-3]
	\arrow["{f''}"', dashed, from=1-5, to=4-5]
	\arrow["{f'}"', dashed, from=1-3, to=4-3]
	\arrow["{u_N}", from=4-5, to=4-3]
	\arrow["{v_N}", from=4-3, to=4-1]
	\arrow["{v_M}"', from=1-3, to=1-1]
\end{tikzcd}\end{equation}

 It follows that
    \begin{nalign}\label{eq:comm_rels}
        fv_M &= v_Nf' \\
        f'u_M &= u_N f''
    \end{nalign}

The original function $f=(f_x:M(x) \to N(x))_{x \in Q_0}$ contains data for maps between $M(x)$ and $N(x)$ at each vertex $x \in Q_0$. We may also use the identity map $\mathbbm{1}_{P(x)}$ in the first argument of each tensor. To show that these do indeed satisfy the commutativity relations \ref{eq:comm_rels}:
    \begin{align*}
        fv_M(p\otimes m) &= f(pm) \\
                         &= pf_x(m)\nonumber\\
                         & \nonumber\\
        v_Nf'(p\otimes m) &= v_N ( \oplus_{x\in Q_0} \mathbbm{1}_{P(x)} \otimes f_x)(p \otimes m)\\
        &= v_N(p\otimes f_x(m))\nonumber\\
        &= pf_x(m)\nonumber
    \end{align*}
for $p \in P(x), m\in M(x)$. Also,
    \begin{align*}
        f'u_M(p\otimes m) &= f'(pa \otimes m - a \otimes pm) \\
                          &= (\oplus_{a \in Q_1} \mathbbm{1}_{P(ta)} \otimes f_{sa})(pa \otimes m - a \otimes pm)\nonumber\\
                          &= pa \otimes f_{sa}(m) - a \otimes f_{sa}(pm)\nonumber\\
                          &= pa \otimes f_{sa}(m) - a \otimes pf_{sa}(m)\nonumber\\
                          & \nonumber\\
        u_Nf''(p\otimes m) &= u_N\oplus_{a \in Q_1} \mathbbm{1}_{P(ta)} \otimes f_{sa})(p \otimes m)\\
                           &= u_N(p\otimes f_{sa}(m))\nonumber\\
                           &= pa \otimes f_{sa}(m) - a \otimes pf_{sa}(m)\nonumber
    \end{align*}
for $p \in P(ta), m \in M(sa)$. Therefore, we have a morphism $(f,f',f'')$ between the standard resolutions of $M$ and $N$. 

\begin{equation}\begin{tikzcd}[ampersand replacement=\&]
	M \&\& {\bigoplus_{x\in Q_0} \, P(x) \otimes M_x} \&\& {\bigoplus_{a\in Q_1} \, P(ta) \otimes M_{sa}} \\
	\\
	\\
	N \&\& {\bigoplus_{x\in Q_0} \, P(x) \otimes N_x} \&\& {\bigoplus_{a\in Q_1} \, P(ta) \otimes N_{sa}}
	\arrow["{f \,}"', dashed, from=1-1, to=4-1]
	\arrow["{u_M}"', from=1-5, to=1-3]
	\arrow["{\bigoplus_{a\in Q_1} \, \mathbbm{1}_{P(ta)} \otimes f_{sa}}"', dashed, from=1-5, to=4-5]
	\arrow["{\bigoplus_{x\in Q_0} \, \mathbbm{1}_{P(x)}\otimes f_x}"', dashed, from=1-3, to=4-3]
	\arrow["{u_N}", from=4-5, to=4-3]
	\arrow["{v_N}", from=4-3, to=4-1]
	\arrow["{v_M}"', from=1-3, to=1-1]
\end{tikzcd}\end{equation}

Next, we apply the Nakayama functor to the diagram to obtain injective presentations of $\tau M, \tau N$:
\begin{equation}\begin{tikzcd}[ampersand replacement=\&]
	{\bigoplus_{x\in Q_0} \, I(x) \otimes M_x} \&\& {\bigoplus_{a\in Q_1} \, I(ta) \otimes M_{sa}} \&\& {\tau M} \\
	\\
	\\
	{\bigoplus_{x\in Q_0} \, I(x) \otimes N_x} \&\& {\bigoplus_{a\in Q_1} \, I(ta) \otimes N_{sa}} \&\& {\tau N}
	\arrow["{\tau f = \bigoplus_{a\in Q_1} \mathbbm{1}_{I(ta)} \otimes f_{sa} \Big|_{\tau M}}"', dashed, from=1-5, to=4-5]
	\arrow["{\nu u_M}"', from=1-3, to=1-1]
	\arrow["{\bigoplus_{a\in Q_1} \, \mathbbm{1}_{I(ta)} \otimes f_{sa}}"', dashed, from=1-3, to=4-3]
	\arrow["{\bigoplus_{x\in Q_0} \, \mathbbm{1}_{I(x)}\otimes f_x}"', dashed, from=1-1, to=4-1]
	\arrow[from=1-5, to=1-3]
	\arrow[from=4-5, to=4-3]
	\arrow["{\nu u_N}", from=4-3, to=4-1]
\end{tikzcd}\end{equation}

Because $\tau M, \tau N$ inject into the spaces to their respective left, we conclude that $\tau f$ is the restriction of $\nu f''$ which can be represented by the restriction 
\[\tau f = \bigoplus_{a \in Q_1} \mathbbm{1}_{I(ta)} \otimes f_{sa}\big|_{\tau M} \colon \tau M \to \tau N. \qedhere\]
\end{proof}

We can now present the main theorem of this section.

\begin{theorem}\label{thm:tev_description}
    For a quiver $Q$ and two representations $M,N \in \rep_{k} Q$ without any projective summands, 
        \[\ev[\tau M][\tau N] \circ (\mathbbm{1}_{\tau M} \otimes \, \varphi)= \tau \ev[M][N].\]
    
    where $\varphi$ is an isomorphism from $\Hom(M,N)$ to $\Hom(\tau M, \tau N)$ which exists by Lemma~\ref{cor:tau_isos}$(a)$. In other words, the following diagram commutes:
\begin{equation}\label{dgm:triangle}
    \begin{tikzcd}[ampersand replacement=\&]
	{\tau M \otimes \Hom(M,N)} \&\&\& {\tau N} \\
	\\
	\& {} \\
	{\tau M \otimes \Hom(\tau M,\tau N)}
	\arrow["{\tau \ev[M][N]}", from=1-1, to=1-4]
	\arrow["{\ev[\tau M][\tau N]}"', from=4-1, to=1-4]
	\arrow["{\mathbbm{1}_{\tau M}\otimes \varphi}"', dashed, from=1-1, to=4-1]
\end{tikzcd}\end{equation}
\end{theorem}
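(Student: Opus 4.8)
The plan is to reduce the statement to the explicit description of $\tau f$ given in Proposition~\ref{prop:tau_f} together with the fact that $\tau$ is additive. First I would fix a basis $f_1,\dots,f_n$ of $\Hom_Q(M,N)$ and use the identification $M^{\oplus n}\cong M\otimes_k\Hom_Q(M,N)$ that sends the $i$-th summand to $M\otimes\langle f_i\rangle$; under this identification the evaluation morphism $\ev[M][N]\colon M\otimes\Hom_Q(M,N)\to N$ is, by definition, the unique morphism whose restriction to the $i$-th summand is $f_i$, i.e. $\ev[M][N](m\otimes f_i)=f_i(m)$.

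Next I would apply $\tau$. Because $M,N$ have no projective summands, neither does $M^{\oplus n}$, so $\tau(M^{\oplus n})$ and $\tau N$ are honest representations; and since $\tau$ is additive — by Lemma~\ref{lem:tau_autoequiv} it is an autoequivalence of the derived category $D^b(\mathcal{C})$, which in particular preserves finite biproducts, or alternatively one checks directly from the construction in Proposition~\ref{prop:stdres} and Proposition~\ref{prop:tau_f} that applying the Nakayama functor to the standard resolution of $M^{\oplus n}$ yields the $n$-fold direct sum of the corresponding data for $M$ — it commutes with finite direct sums, $\tau(M^{\oplus n})\cong(\tau M)^{\oplus n}$, with the canonical inclusions going to the canonical inclusions. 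Hence $\tau\ev[M][N]\colon (\tau M)^{\oplus n}\to\tau N$ is the morphism whose restriction to the $i$-th summand is $\tau f_i$. By Proposition~\ref{prop:tau_f}(b), $\tau f_i=\bigoplus_{a\in Q_1}\mathbbm{1}_{I(ta)}\otimes(f_i)_{sa}\big|_{\tau M}=\varphi f_i$, so under the identification $(\tau M)^{\oplus n}\cong\tau M\otimes\Hom_Q(M,N)$ via the same basis $\{f_i\}$ we obtain $\tau\ev[M][N](x\otimes f_i)=(\varphi f_i)(x)$ for all $x\in\tau M$.

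It then remains to evaluate the other composite on the spanning set $\{x\otimes f_i\}$. By definition $(\mathbbm{1}_{\tau M}\otimes\varphi)(x\otimes f_i)=x\otimes\varphi f_i$, and since $\varphi$ is an isomorphism (Corollary~\ref{cor:tau_isos}(a)) the elements $\varphi f_1,\dots,\varphi f_n$ form a basis of $\Hom_Q(\tau M,\tau N)$, so $\ev[\tau M][\tau N](x\otimes\varphi f_i)=(\varphi f_i)(x)$. This agrees with the value of $\tau\ev[M][N]$ found above, and since the $x\otimes f_i$ span $\tau M\otimes\Hom_Q(M,N)$ the two morphisms are equal; that is, diagram~\eqref{dgm:triangle} commutes.

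The step I expect to be the main obstacle is the additivity argument: one must be careful that $\tau$, which is not exact on the module category, nevertheless sends the biproduct $M^{\oplus n}$ with its inclusions to $(\tau M)^{\oplus n}$ with its inclusions, so that $\tau\ev[M][N]$ genuinely decomposes over the summands $M\otimes\langle f_i\rangle$ with components $\tau f_i$. The clean way is to pass to $D^b(\mathcal{C})$, where $\tau$ is an honest additive autoequivalence (Lemma~\ref{lem:tau_autoequiv}); alternatively one tracks through the proof of Proposition~\ref{prop:tau_f} directly. A secondary point is to confirm that the isomorphism $\varphi$ of Corollary~\ref{cor:tau_isos}(a) is the one implemented by $f\mapsto\tau f$ in the form given by Proposition~\ref{prop:tau_f}(b), which is what makes $\{\varphi f_i\}$ a basis of $\Hom_Q(\tau M,\tau N)$ and pins down the left leg of the triangle.
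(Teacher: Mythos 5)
Your proof is correct and takes essentially the same approach as the paper: both reduce the claim to Proposition~\ref{prop:tau_f}(b), which identifies $\tau f$ with $\varphi f = \bigoplus_{a}\mathbbm{1}_{I(ta)}\otimes f_{sa}\big|_{\tau M}$, and then verify equality of the two legs of the triangle on spanning elements $x\otimes f_i$. The one genuine addition is that you make explicit the additivity of $\tau$ needed to identify $\tau\bigl(M\otimes\Hom(M,N)\bigr)$ with $\tau M\otimes\Hom(M,N)$ (so that $\tau\ev[M][N]$ has the claimed domain and decomposes with components $\tau f_i$), a step the paper's proof leaves implicit when it applies Proposition~\ref{prop:tau_f} directly to the morphism $\ev[M][N]$.
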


\begin{proof} 
By the previous proposition, we obtain a description of $\tau \ev[M][N]$:
    \begin{equation}\tau \ev[M][N] = \bigoplus_{a \in Q_1} \mathbbm{1}_{I(ta)} \otimes \ev_{sa} \big|_{\tau M \otimes \Hom(M,N)} : \tau M \otimes \Hom(M,N) \to \tau N.\end{equation}
We describe these maps explicitly. The map across the top of the triangle diagram \ref{dgm:triangle} is \[\tau \ev[M][N]: \tau M \otimes \Hom_{Q}(M,N) \to \tau N\] 
given by 
    \[ \left( \sum\limits_{a\in Q_1} \overline{p}_a\otimes m_a \right) \otimes \, f \mapsto \sum\limits_{a\in Q_1} \overline{p}_a \otimes \, f_{sa}(m_a) \] 
where \[f=(f_x)_{x\in Q_0} \in \Hom_{Q}(M,N), \quad  \overline{p}_a \in I(ta), \quad  m_a \in M_{sa}\] and extended linearly to $\tau M \subseteq \bigoplus_{a\in Q_1} I(ta) \otimes M_{sa}$. The map down the left side is
    \[\mathbbm{1}_{\tau M} \otimes \, \varphi: \tau M \otimes \Hom_{Q}(M,N) \to \tau M \otimes \Hom_{Q}(\tau M, \tau N)\] 
given by 
    \[ \left( \sum\limits_{a\in Q_1} \overline{p}_a\otimes m_a\right) \otimes \, f \mapsto \left( \sum\limits_{a\in Q_1} \overline{p}_a \otimes m_a \right)\otimes \tau f\] 
where \[\tau f = \bigoplus_{a\in Q_1} \mathbbm{1}_{I(ta)} \otimes \, f_{sa} \in \Hom_{Q}(\tau M, \tau N), \quad  \overline{p}_a \in I(ta), \quad m_a \in M_{sa}\] as proven in Proposition~\ref{prop:tau_f}$(b)$ and extended linearly to $\tau M$ as in the previous case.

The map moving from the bottom to the top right is
    \[\ev[\tau M][\tau N]: \tau M \otimes \Hom_{Q}(\tau M, \tau N) \to \tau N\] 
given by 
    \[ \left( \sum\limits_{a\in Q_1} \overline{p}_a\otimes m_a\right) \otimes \tau f \mapsto \left( \sum\limits_{a\in Q_1} \tau f(\overline{p}_a \otimes m_a) \right)\] 
Therefore, the composition $\ev[\tau M][\tau N] \circ (\mathbbm{1}_{\tau M} \otimes \varphi)$ gives 
    \begin{nalign*}
        \left( \sum\limits_{a\in Q_1} \overline{p}_a \otimes m_a \right) \otimes f \mapsto \left( \sum\limits_{a\in Q_1} \overline{p}_a \otimes m_a\right) \otimes \tau f  &\mapsto \sum\limits_{a\in Q_1} \tau f(\overline{p}_a \otimes m_a)\\
                    &= \sum\limits_{a\in Q_1} (\mathbbm{1}_{I(ta)} \otimes f_{sa})(\overline{p}_a\otimes m_a)\\
                    &= \sum\limits_{a\in Q_1} \overline{p}_a \otimes f_{sa}(m_a)
    \end{nalign*}
which is equal to the first map, $\tau \ev[M][N]$.
\end{proof}

\begin{corollary}\label{cor:ker_ev_t}
    For two representations $M, N \in \rep_{k}(Q)$ for some quiver $Q$ such that $\tau^{k-1} M, \tau^{k-1} N$ are not projective and for any $k \in \Z_{\geq 0}$, we have 
    \begin{equation} 
        \ker \ev[M][N] \cong \tau^{-k} \ker \ev[\tau^k M][\tau^k N].
    \end{equation}
    Similarly, the statement holds for $k < 0$ if $\tau^{k+1} M, \tau^{k+1} N$ are not injective.
\end{corollary}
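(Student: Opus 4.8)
The plan is to prove the case $k=1$ and then iterate, and to read the hypothesis ``$\tau^{k-1}M$ not projective'' in the strong sense that $\tau^{k-1}M$ and $\tau^{k-1}N$ have no projective direct summands. Since a projective summand of $\tau^{j}M$ with $j<k-1$ would be annihilated by the next application of $\tau$ and hence force $\tau^{k-1}M=0$, this hypothesis in fact guarantees that $\tau^{j}M$ and $\tau^{j}N$ have no projective summands for all $0\le j\le k-1$, so that Theorem~\ref{thm:tev_description} and Proposition~\ref{prop:tau_f} apply to each pair $(\tau^{j}M,\tau^{j}N)$ in that range. The case $k=0$ is trivial.

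For $k=1$ the goal is the isomorphism $\tau\ker\ev[M][N]\cong\ker\ev[\tau M][\tau N]$, from which the stated form follows on applying $\tau^{-1}$. I would obtain this from two observations. First, Theorem~\ref{thm:tev_description} gives $\tau\ev[M][N]=\ev[\tau M][\tau N]\circ(\mathbbm{1}_{\tau M}\otimes\varphi)$ with $\mathbbm{1}_{\tau M}\otimes\varphi$ an isomorphism, and precomposing a morphism with an isomorphism does not change its kernel up to isomorphism, so $\ker(\tau\ev[M][N])\cong\ker\ev[\tau M][\tau N]$. Second, $\tau$ commutes with forming the kernel of $\ev[M][N]$, that is, $\tau\ker\ev[M][N]\cong\ker(\tau\ev[M][N])$. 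For this I would use the explicit descriptions already in the paper: Proposition~\ref{prop:tau_f} (via the standard projective resolution, Proposition~\ref{prop:stdres}) realizes $\tau L$, for any $L$, as the kernel of $\nu u_L\colon\bigoplus_{a\in Q_1}I(ta)\otimes L_{sa}\to\bigoplus_{x\in Q_0}I(x)\otimes L_x$, naturally in $L$. Because $(-)\otimes_k(-)$ and evaluation at a vertex are exact, a submodule $K\subseteq L$ then satisfies $\tau K=\tau L\cap\bigl(\bigoplus_{a\in Q_1}I(ta)\otimes K_{sa}\bigr)$ inside $\bigoplus_{a\in Q_1}I(ta)\otimes L_{sa}$. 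Applying this with $L=M^{n}$ and $K=\ker\ev[M][N]$, together with the formula of Proposition~\ref{prop:tau_f} writing $\tau\ev[M][N]$ as the restriction to $\tau(M^{n})$ of $\bigoplus_{a}\mathbbm{1}_{I(ta)}\otimes(\ev[M][N])_{sa}$, one reads off $\tau\ker\ev[M][N]=\ker(\tau\ev[M][N])$. Combining the two observations gives the $k=1$ case.

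Iterating this identity along the chain $(\tau^{j}M,\tau^{j}N)$, $0\le j\le k-1$, yields $\tau^{k}\ker\ev[M][N]\cong\ker\ev[\tau^{k}M][\tau^{k}N]$ for all $k\ge 1$, and applying $\tau^{-k}$ to both sides produces the corollary. The one delicate step is this last one: undoing $\tau$ by $\tau^{-1}$ recovers a module exactly when it has no projective summand to lose. The clean resolution is to carry out the final identification in $D^{b}(\rep_k Q)$: by Lemma~\ref{lem:tau_autoequiv}, $\tau$ is an honest autoequivalence there, so $\tau^{-k}\tau^{k}X\cong X$ for every object $X$, and since all the representations occurring above have no projective summands, the derived powers of $\tau$ agree on the nose with the classical ones and no two-term complexes can intrude. (Equivalently, one verifies directly that $\ker\ev[M][N]$ has no projective direct summand.) The case $k<0$ is entirely dual: replace the standard projective resolution by the minimal injective resolution, $\nu$ by $\nu^{-1}$, and $\tau$ by the right-exact $\tau^{-1}$, and use the dual of Theorem~\ref{thm:tev_description}; the hypothesis that $\tau^{k+1}M,\tau^{k+1}N$ are not injective is precisely what allows that dual statement at each of the $|k|$ steps. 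I expect the main obstacle to be exactly this summand bookkeeping — ensuring $\tau^{-k}\tau^{k}$ is literally the identity on $\ker\ev[M][N]$ and not $\ker\ev[M][N]$ with a projective (resp.\ injective) summand stripped off — with everything else formal once Theorem~\ref{thm:tev_description} and the exactness of $\otimes_k$ are available.
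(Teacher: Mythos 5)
Your proposal follows the paper's overall strategy for the first step — using Theorem~\ref{thm:tev_description} to get $\ker(\tau\ev[M][N])\cong\ker\ev[\tau M][\tau N]$ — but it replaces the paper's argument for $\tau\ker\ev[M][N]\cong\ker(\tau\ev[M][N])$ by a genuinely different one. The paper embeds the short exact sequence $0\to\ker\ev\to M\otimes\Hom(M,N)\to\im\ev\to 0$ into $D^b(\rep_k Q)$ as a triangle, applies $\tau$ using Lemma~\ref{lem:tau_autoequiv}, and reads a short exact sequence back off the resulting triangle. You instead argue directly at the module level: Proposition~\ref{prop:tau_f} realizes $\tau L$ inside $\bigoplus_{a}I(ta)\otimes L_{sa}$ naturally in $L$, so for a subrepresentation $K\subseteq L$ one has $\tau K=\tau L\cap\bigoplus_a I(ta)\otimes K_{sa}$, and taking $L=M\otimes\Hom(M,N)$, $K=\ker\ev$, and using exactness of $\otimes_k$ to identify $\bigoplus_a I(ta)\otimes K_{sa}=\ker\bigl(\bigoplus_a\mathbbm{1}_{I(ta)}\otimes\ev_{sa}\bigr)$, you get $\tau K=\ker(\tau\ev)$. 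This is more concrete than the paper's triangle argument and is a legitimate alternative; in fact it sidesteps a subtle point the paper glosses over, namely that reading a short exact sequence off the $\tau$-translated triangle requires all three $\tau$-translates to be stalk complexes, which is not guaranteed when $\ker\ev$ or $\im\ev$ has projective summands.

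Where the proposal has a real flaw is the parenthetical escape route. The claim that ``one verifies directly that $\ker\ev[M][N]$ has no projective direct summand'' is simply false: the paper's own Theorem~\ref{thm:ev(Pm,N)} for the Kronecker quiver gives, e.g., $\ker\ev[P_2][P_n]\cong(P_1)^{n-2}$ for $n>2$, and $P_1=P(x)$ is projective; here $P_2$ and $P_n$ are not projective, so the corollary's hypotheses hold. This example (and the paper's later use of the corollary, where $\ker\ev[P_0][\cdot]\cong I_0[-1]\otimes\cdot$ is a shifted complex rather than a module) shows that the isomorphism must be read in $D^b(\rep_k Q)$ with $\ker$ understood as a co-cone, and that a module-level reading of $\tau^{-k}\tau^k$ cannot be rescued by arguing the kernel is projective-free. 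Your primary resolution — pass to $D^b$ and use that $\tau$ is an autoequivalence there — is the correct one and matches the paper; just drop the incorrect parenthetical alternative, and be a bit more careful in your iteration step that the object $\ker\ev[\tau^k M][\tau^k N]$ appearing on the right side need not be a module at all (only its $\tau^{-k}$-translate is), which is exactly why the whole argument must live in the derived category.
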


\begin{proof}
    Suppose $k=1$. It suffices to take $M, N \in \rep_{k} Q \subseteq D^b( \rep_{k} Q)$ non-projective. Then the result follows from Theorem~\ref{thm:tev_description} which states that $$ \ev[\tau M][\tau N] \circ (\mathbbm{1}_{\tau M} \otimes \, \varphi) = \tau \ev[M][N].$$ Because the maps $\mathbbm{1}_{\tau M}$ and $\varphi$ are both isomorphisms, we have that in $\rep_{k} Q$
        \begin{nalign}
            \ker \tau \ev[M][N] &= \ker \left(\ev[\tau M][\tau N] \circ (\mathbbm{1}_{\tau M} \otimes \, \varphi) \right) \nonumber\\
                                &\cong \ker \ev[\tau M][\tau N] .\label{eq:evtau1}
        \end{nalign}    
    Now consider the following short exact sequence in $\rep_{k} Q \subseteq D^b(\rep_{k}Q)$.
        \begin{equation}
            0\to\ker \ev[M][N] \to M\otimes \Hom(M,N)\xto{\ev[M][N]} \im \ev[M][N] \to 0
        \end{equation}
    This corresponds to a triangle 
        \begin{equation}\ker \ev[M][N] \to M\otimes \Hom(M,N)\xto{\ev[M][N]} \im \ev[M][N] \to (\ker \ev[M][N])[1]
    \end{equation}
    in $D^b(\rep_{k} Q)$. Then, since the Auslander-Reiten translation $\tau$ is an autoequivalence on $D^b(\rep_{k} Q)$ which preserves triangles (Lemma~\ref{lem:tau_autoequiv}), we have that the following is also a  triangle.
        \begin{equation}
            \tau \ker \ev[M][N] \to \tau M\otimes \Hom(M,N)\xto{\ev[M][N]}\tau \im \ev[M][N] \to \tau(\ker \ev[M][N])[1]
        \end{equation}
    and this triangle corresponds to the exact sequence
        \begin{equation}
            0\to \tau \ker \ev[M][N] \to \tau M\otimes \Hom(M,N)\xto{\tau \ev[M][N]} \tau \im \ev[M][N] \to 0.
        \end{equation}
    From this sequence, we conclude that 
        \begin{equation}
            \tau \ker \ev[M][N] \cong \ker \tau \ev[M][N].
        \end{equation} 
    Thus, by the isomorphism~\ref{eq:evtau1}, 
        \begin{equation} \tau \ker \ev[M][N] \cong \ker      \ev[\tau M][\tau N].
        \end{equation}
    Next, we can apply the inverse $\tau^{-1}$ to get 
        \begin{equation}
            \ker \ev[M][N] \cong \tau^{-1} \ker \ev[\tau M][\tau N].
        \end{equation}
    A similar argument works for $\tau^k$ for any power $k \in \Z$, as long as $\tau^{k-1}M, \tau^{k-1}N$ are not projective if $k$ is positive and $\tau^{k-1}M, \tau^{k-1}N$ are not injective if $k$ is negative.
\end{proof}

 Finally, notice that the evaluation morphism is additive in the first and second arguments under a few restrictions. 

\begin{proposition}
    Given two finite-dimensional representations $M \cong \bigoplus_{i=1}^n M_i$ and $N \cong \bigoplus_{i=1}^n N_i$ in $\rep_k Q$, the evaluation morphism
    \begin{equation}
        \ev[M][N] : M \otimes_k \Hom_Q(M, N) \longrightarrow N
    \end{equation}
    splits as a direct sum over the source:
    \[
        M \otimes_k \Hom_Q(M, N) \cong \bigoplus_{i=1}^n \left( M_i \otimes_k \Hom_Q(M_i, N) \right),
    \]
    and if, in addition,
    \[
        \Hom_Q(M_i, N_j) = 0 \quad \text{for all } i \ne j,
    \]
    then the map also splits over the target, and we have
    \begin{equation}
        \ev[M][N] = \bigoplus_{i=1}^n \ev[M_i][N_i] :
        \bigoplus_{i=1}^n \left( M_i \otimes_k \Hom_{Q}(M_i, N_i) \right)
        \longrightarrow \bigoplus_{i=1}^n N_i.
    \end{equation}
\end{proposition}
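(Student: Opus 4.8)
The plan is to reduce everything to the basis-free description of the evaluation morphism. First I would record that, at each vertex $x \in Q_0$, $\ev[M][N]$ is the $k$-linear map $M_x \otimes_k \Hom_Q(M,N) \to N_x$ given on pure tensors by $m \otimes f \mapsto f_x(m)$, and that this family of maps is a morphism of representations precisely because every $f \in \Hom_Q(M,N)$ commutes with the structure maps of $M$ and $N$; choosing a basis $f_1,\dots,f_n$ of $\Hom_Q(M,N)$ and identifying $M \otimes_k \Hom_Q(M,N) \cong M^n$ recovers Definition~\ref{def:evalmorph}. With this in hand, both splittings become naturality statements that can be checked on pure tensors, which is what I would do.

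For the source splitting, I would invoke two standard facts: the functor $\Hom_Q(-,N)$ is additive, so the canonical inclusions $M_i \hookrightarrow M$ induce an isomorphism $\Hom_Q(M,N) \xrightarrow{\sim} \bigoplus_{i} \Hom_Q(M_i,N)$ sending $f$ to the tuple of its restrictions; and $-\otimes_k-$ commutes with finite direct sums, so $M \otimes_k \Hom_Q(M,N) \cong \bigoplus_i \big(M_i \otimes_k \Hom_Q(M_i,N)\big)$. The only thing to verify is that $\ev[M][N]$ is block-diagonal for this decomposition of its domain: if $m \in M_i$ and $f \in \Hom_Q(M,N)$, then $f(m)$ depends on $f$ only through its restriction to $M_i$, so the $i$-th block of $\ev[M][N]$ is literally $\ev[M_i][N]$ after the identification above. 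Note that at this stage the codomain is still all of $N$ and does not yet decompose — a morphism $M_i \to N$ need not respect the decomposition of $N$ — which is exactly what forces the extra hypothesis in the second half.

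For the refined statement, assume $\Hom_Q(M_i,N_j)=0$ whenever $i \neq j$. Applying additivity of $\Hom_Q(M_i,-)$ to $N = \bigoplus_j N_j$ and using the vanishing gives $\Hom_Q(M_i,N) \cong \Hom_Q(M_i,N_i)$, so the $i$-th source summand is $M_i \otimes_k \Hom_Q(M_i,N_i)$. Moreover, under this identification every $f \in \Hom_Q(M_i,N_i)$ has image inside the summand $N_i \subseteq N$, so the $i$-th block of $\ev[M][N]$ is $\ev[M_i][N_i]$ followed by the inclusion $N_i \hookrightarrow N$; summing the $n$ blocks and identifying $\bigoplus_i N_i$ with $N$ yields $\ev[M][N] = \bigoplus_i \ev[M_i][N_i]$ as claimed. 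I would present this via a single commutative square whose vertical maps are the decomposition isomorphisms on source and target and whose horizontal maps are $\ev[M][N]$ and $\bigoplus_i \ev[M_i][N_i]$, so that commutativity need only be checked once, on pure tensors.

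The step I expect to be the main (though still mild) obstacle is the bookkeeping that separates the two regimes: the source always splits, with blocks governed by $\Hom_Q(M_i,N)$, whereas the target splits only after the off-diagonal Hom-spaces vanish, and one must take care that the source isomorphism, the target isomorphism, and $\ev$ assemble into a commuting diagram rather than merely matching up dimensions. Everything else is routine additivity of $\Hom$ and distributivity of $\otimes_k$ over finite direct sums.
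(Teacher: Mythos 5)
Your proposal follows the paper's own proof closely: both invoke additivity of $\Hom_Q(-,N)$ and distributivity of $\otimes_k$ over finite direct sums, and both obtain the target splitting from the vanishing $\Hom_Q(M_i,N_j)=0$ for $i\ne j$. However, you reproduce the same imprecision that the paper's proof contains. The displayed source decomposition $M \otimes_k \Hom_Q(M,N) \cong \bigoplus_i \left(M_i \otimes_k \Hom_Q(M_i,N)\right)$ is not an isomorphism of vector spaces in general: distributing $\otimes_k$ over $M \cong \bigoplus_i M_i$ and $\Hom_Q(M,N) \cong \bigoplus_j \Hom_Q(M_j,N)$ produces the double sum $\bigoplus_{i,j}\left(M_i \otimes_k \Hom_Q(M_j,N)\right)$, and the cross terms $i\ne j$ are nonzero whenever $M_i\ne 0$ and $\Hom_Q(M_j,N)\ne 0$, so they cannot be silently dropped (a dimension count confirms the two sides do not agree). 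What is actually true --- and what your observation that $f(m)$ ``depends on $f$ only through its restriction to $M_i$'' is implicitly exploiting --- is that $\ev[M][N]$ vanishes on each off-diagonal summand $M_i \otimes_k \Hom_Q(M_j,N)$ with $i\ne j$, since a morphism $f\in\Hom_Q(M_j,N)$ extended by zero annihilates $M_i$. Hence $\ev[M][N]$ factors through the projection onto the diagonal $\bigoplus_i\left(M_i\otimes_k\Hom_Q(M_i,N)\right)$, and the first half of the proposition should really be read as a factorization of the morphism rather than an isomorphism of the source. Once that correction is made, the rest of your argument goes through: under $\Hom_Q(M_i,N_j)=0$ for $i\ne j$ the diagonal summand becomes $\bigoplus_i\left(M_i\otimes_k\Hom_Q(M_i,N_i)\right)$, and your commutative-square argument for the target splitting is correct and essentially identical to what the paper does.
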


\begin{proof}
    Using additivity of the $\Hom$-functor and the tensor product $\otimes$, we have isomorphisms:
    \begin{align}
        \Hom_Q(M, N) &\cong \Hom_Q\left(\bigoplus_i M_i, N \right) \cong \prod_i \Hom_Q(M_i, N) \\
        M \otimes_k \Hom_Q(M, N) &\cong \left(\bigoplus_i M_i\right) \otimes_k \left(\prod_i \Hom_Q(M_i, N)\right)
    \end{align}

    Because all representations are finite-dimensional, the product is naturally isomorphic to the direct sum
        \[\prod_i \Hom_Q(M_i, N) \cong \bigoplus_i \Hom_Q(M_i, N).\]
        
    and thus
        \[M \otimes_k \Hom_Q(M, N) \cong \bigoplus_i \left( M_i \otimes_k \Hom_Q(M_i, N) \right).\]

    For each $i$, define the $i$-th evaluation morphism
        \[\ev[M_i][N] : M_i \otimes_k \Hom_Q(M_i, N) \longrightarrow N, \quad m_i \otimes f_i \mapsto f_i(m_i).\]

    The evaluation morphism
        \[\ev[M][N] = \sum_i \ev[M_i][N] : \bigoplus_i \left( M_i \otimes_k \Hom_Q(M_i, N) \right) \longrightarrow N\]
        
    is then the direct sum of these component morphisms. Finally, because $\Hom_Q(M_i,N_i)=0$ for $i\ne j$, the evaluation morphism splits over the target as well and we get
        \begin{equation}
            \ev[M][N] : \bigoplus_i \left( M_i \otimes_k \Hom_Q(M_i, N_i) \right) \longrightarrow \bigoplus_i N_i.
        \end{equation}
        
    Therefore,
        \begin{equation}
            \ev[M][N] = \bigoplus_i \ev[M_i][N_i].
        \end{equation}

\end{proof}

\section{Completely-decomposable subcategories for the Kronecker quiver}\label{sec:pattern-theorems}

 The main theorems of this section are formulas which describe the kernel of the evaluation morphism $\ev[M][N]$ for $M, N$ indecomposable representations of the Kronecker quiver $\mathsf{K}$. The formulas give us information on when the kernel of the evaluation morphism is contained within a component AR($\mathsf{K}$) and when it is a direct sum of indecomposables from different components. This will be useful information in determining the structure of completely-decomposable subcategories for $\rep_{k} \mathsf{K}$. The nontrivial kernels of the evaluation morphism applied to indecomposable representations of the Kronecker quiver are summarized in the Table \ref{table:patterns}. 

\begin{table}[htp]
\centering
\caption{Nontrivial Kernels of the Evaluation Morphism for $Q = \mathsf{K}$}
\label{table:patterns}
\def\arraystretch{1.5}
    \begin{tabular}{|c|c|c|c|}
        \hline
        \textbf{M} & \textbf{N} & $\ker(\ev[M][N])$ & \textbf{Conditions} \\ \hline \hline
        {$P_m$} & {$P_n$} & $(P_{m-1})^{n-m}$ & $n > m \geq 1$ \\ \hline
        {$P_m$} & {$R_n(\lambda)$} & $(P_{m-1})^n$ & $m \geq 1$ \\  \hline
        {$P_m$} & {$I_n$} & $(P_{m-1})^{m+n+1}$ & $m \geq 1$ \\ \hline
        \multirow{2}{*}{$R_m(\lambda)$} & \multirow{2}{*}{$R_n(\lambda)$} & $(R_m(\lambda))^{m-1}$ & $m \leq n$ \\ \cline{3-4} 
         &  & $R_{m-n}(\lambda) \oplus (R_m(\lambda))^{n-1}$ & $m>n$ \\ \hline
        \multirow{2}{*}{$R_m(\lambda)$} & \multirow{2}{*}{$I_n$} & $(R_m(\lambda))^{m-1}$ & $m < n+1$ \\ \cline{3-4} 
         &  & $P_{m-n-1} \oplus (R_m(\lambda))^{m-1}$ & $m \geq n+1$\\ \hline
        {$I_m$} & {$I_n$} & $(I_{m+1})^{m-n}$ & $m > n \geq 1$ \\ \hline
    \end{tabular}
\end{table}

We now state the results pertaining to the kernel of the evaluation morphism between indecomposable representations of the Kronecker quiver $\mathsf{K}$. The statements, collectively referred to as the ``Pattern Theorems", are provided below and their proofs are deferred to Appendix~\ref{appendix:pattern-proofs}.

\begin{theorem}\label{thm:ev(Pm,N)}
    Given a postprojective indecomposable representation $P_m$, $m\geq 0$ and an indecomposable representation $N$ of the Kronecker quiver $\mathsf{K}$, we have that 
        \begin{enumerate}[(a)] 
            \item $\ker \ev[P_m][P_n] \cong (P_{m-1})^{n-m}, \quad n > m \geq 1 $,
                
            \item $\ker \ev[P_m][R_n(\lambda)] \cong (P_{m-1})^n, \quad m \geq 1$,
                
            \item $\ker \ev[P_m][I_n] \cong (P_{m-1})^{m+n+1}, \quad m \geq 1.$
        \end{enumerate}
    All other kernels are equal to 0.
\end{theorem}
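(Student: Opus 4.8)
\emph{Outline of the approach.} The plan is to handle all three parts uniformly, for an arbitrary indecomposable $N$ of $\mathsf K$, by reducing to two facts: $\ker\ev[P_m][N]$ is always a power of $P_{m-1}$, and $\ev[P_m][N]$ is surjective. Write $d=\dim_k\Hom(P_m,N)$. Throughout I use the Euler form $\langle(a,b),(c,e)\rangle=ac+be-2ae$ of $\mathsf K$, which computes $\dim\Hom-\dim\operatorname{Ext}^1$, together with $\operatorname{Ext}^1(P_m,N)=0$: this holds because $P_0,P_1$ are projective and, for $m\ge2$, $\operatorname{Ext}^1(P_m,N)\cong D\,\overline{\Hom}(N,\tau P_m)=D\,\overline{\Hom}(N,P_{m-2})$, which vanishes since there are no nonzero maps from a regular or preinjective indecomposable to a postprojective one and $\Hom(P_j,P_{m-2})=0$ once $j\ge m-1$. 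Evaluating the Euler form gives $d=n-m+1$ for $N=P_n$ with $n\ge m$ (and $d=0$ if $n<m$), $d=n$ for $N=R_n(\lambda)$, and $d=m+n$ for $N=I_n$; it also records that, for $m\ge1$, the indecomposables $X$ with $\Hom(P_m,X)=0$ are precisely $P_0,\dots,P_{m-1}$, and that $\operatorname{Ext}^1(P_m,P_j)=0$ exactly when $j\ge m-1$.

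\emph{Step 1: $\ker\ev[P_m][N]$ is a power of $P_{m-1}$.} The kernel $K$ is a submodule of $P_m^{\,d}=P_m\otimes\Hom(P_m,N)$, hence a direct sum $\bigoplus_i P_{j_i}$ of postprojectives (an indecomposable regular or preinjective summand of $K$ would inject into the postprojective module $P_m^{\,d}$, which is impossible). Apply $\Hom(P_m,-)$ to $0\to K\to P_m^{\,d}\xrightarrow{\ev}\im\ev\to0$: since $\End P_m=k$, the composite $\Hom(P_m,P_m^{\,d})\to\Hom(P_m,\im\ev)\hookrightarrow\Hom(P_m,N)$ is exactly ``expand a morphism in the chosen basis of $\Hom(P_m,N)$'', hence an isomorphism, and a short diagram chase then yields $\Hom(P_m,K)=0$ and $\operatorname{Ext}^1(P_m,K)=0$ (using $\operatorname{Ext}^1(P_m,P_m)=0$). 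The first condition forces every $j_i\le m-1$, the second forces every $j_i\ge m-1$; thus $K\cong P_{m-1}^{\,r}$. (When $m=0$, or more generally whenever $\Hom(P_m,N)=0$, the first condition already gives $K=0$.)

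\emph{Step 2: $\ev[P_m][N]$ is surjective, and $r=d+(\dim N)_x-(\dim N)_y$.} Let $T=\im\ev[P_m][N]=\operatorname{tr}_{P_m}(N)\subseteq N$. If $T=U\oplus V$ with $\Hom(P_m,U)=0$, then every morphism $P_m\to T$ has trivial $U$-component, so every morphism $P_m\to N$ (which factors through $T$ by definition of the trace) has image in $V$; hence $T=\operatorname{tr}_{P_m}(N)\subseteq V$ and $U=0$. By the list of indecomposables with vanishing $\Hom$ from $P_m$ recorded above, $T$ therefore has no summand among $P_0,\dots,P_{m-1}$, whence $\operatorname{Ext}^1(P_m,T)=0$. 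From $0\to T\to N\to N/T\to0$ this makes $\Hom(P_m,N)\to\Hom(P_m,N/T)$ surjective; but that map is zero, so $\Hom(P_m,N/T)=0$, i.e.\ $N/T\cong\bigoplus P_{a_i}$ with each $a_i\le m-1$. Finally $\operatorname{Ext}^1(N/T,T)=0$ (the contributions $\operatorname{Ext}^1(P_{a_i},P_j)$ with $a_i\le m-1$ and $j\ge m$ vanish, and $\operatorname{Ext}^1(P_{a_i},-)$ annihilates the regular and preinjective summands of $T$), so the sequence $0\to T\to N\to N/T\to0$ splits and $N\cong T\oplus N/T$; as $N$ is indecomposable and $T\ne0$ (we are in the cases $\Hom(P_m,N)\ne0$), we get $N/T=0$. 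Now $0\to P_{m-1}^{\,r}\to P_m^{\,d}\to N\to0$, and subtracting the $x$- from the $y$-coordinate of the dimension vectors gives $r=d+(\dim N)_x-(\dim N)_y$, that is $r=n-m,\ n,\ m+n+1$ for $N=P_n,\ R_n(\lambda),\ I_n$ respectively.

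\emph{Step 3: the remaining kernels vanish,} since for $m=0$ the map $\ev[P_0][N]$ is the inclusion $(0,N_y)\hookrightarrow N$ (from $\Hom(P(y),N)\cong N_y$), and for $N=P_n$ with $n\le m$ either $\Hom(P_m,P_n)=0$ (if $n<m$) or $\ev$ is an isomorphism (if $n=m$). The crux of the whole argument is Step 2: since the isomorphism type of $\ker\ev$ is pinned down by $d$ and $\dim(\im\ev)$, the real content is the surjectivity of $\ev[P_m][N]$, which here drops out of the single observation that the trace $\operatorname{tr}_{P_m}(N)$ has no summand $P_j$ with $j<m$. I would expect the bulk of the work in a full write-up to be the homological ledger assembled in the first paragraph — the values of $\dim\Hom(P_m,N)$ and the exact ranges in which $\operatorname{Ext}^1(P_m,-)$ vanishes — together with a uniform treatment of the edge cases $m=1$, $n=0$, and $\lambda=\infty$; the structural steps themselves are short once those facts are recorded, and this conceptual route avoids the lengthier case-by-case matrix computations.
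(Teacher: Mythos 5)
Your proof is correct, and it is a genuinely different route from the paper's. The paper reduces to the base cases $P_0$ and $P_1$ via the Auslander--Reiten translation $\tau$ (splitting into $m$ even and $m$ odd), computes $\ker\ev[P_1][N]$ by an explicit matrix reduction of $\begin{bsmallmatrix}\alpha_N & \beta_N\end{bsmallmatrix}$, handles $\ev[P_0][N]$ by passing to the derived category and using the co-cone $I_0[-1]\otimes N(x)$ as a ``kernel'' (since the genuine kernel there vanishes), and then transports the answer back with Corollary~\ref{cor:ker_ev_t}. Your argument instead works homologically at a single $m$: you identify the shape of the kernel by showing $\Hom(P_m,K)=0$ and $\operatorname{Ext}^1(P_m,K)=0$ via the observation that $\Hom(P_m,\ev)$ is an isomorphism onto $\Hom(P_m,N)$, then establish surjectivity of $\ev[P_m][N]$ by the trace argument (the trace $T=\operatorname{tr}_{P_m}(N)$ can have no summand $P_j$ with $j<m$, $\operatorname{Ext}^1(P_m,T)=0$ forces $\Hom(P_m,N/T)=0$, and $\operatorname{Ext}^1(N/T,T)=0$ splits the sequence against indecomposability of $N$), and finally read off the multiplicity from the dimension vectors in $0\to P_{m-1}^{\,r}\to P_m^{\,d}\to N\to0$. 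I checked the ledger: the Euler form and AR duality give exactly the vanishing ranges and Hom-dimensions you quote, the arithmetic $r=d+(\dim N)_x-(\dim N)_y$ yields $n-m$, $n$, $m+n+1$ in the three cases, and the edge cases $m=0$, $m=1$, and $n\le m$ are all dispatched. Your route is more uniform (no parity split, no derived-category co-cone) and explains \emph{why} the kernel is a pure power of $P_{m-1}$, at the cost of not producing an explicit basis of the kernel; the paper's route is more computational but produces explicit kernel generators and sets up the $\tau$-translation machinery it reuses for the remaining Pattern Theorems. Both are valid proofs of the statement.
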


\begin{theorem}\label{thm:ev(Rm,Rn)}
    Given two regular indecomposable representations $R_m(\lambda), R_n(\lambda)$, $m, n\geq 1, \lambda \in k \cup \{\infty\}$, of the Kronecker quiver $\mathsf{K}$, we have that 
        \[\ker \ev[R_m(\lambda)][R_n(\lambda)] \cong \begin{cases}(R_m(\lambda))^{m-1}, & m \leq n\\ R_{m-n}(\lambda) \oplus (R_m(\lambda))^{n-1}, & m>n.\end{cases} \]
    All other kernels are equal to 0.
\end{theorem}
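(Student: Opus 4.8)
The plan is to reduce the computation of $\ker\ev[R_m(\lambda)][R_n(\lambda)]$ to concrete linear algebra, using the explicit matrix presentations of the regular indecomposables. First I would compute $\Hom_{\mathsf{K}}(R_m(\lambda), R_n(\lambda))$: a morphism is a pair of matrices $(g_x,g_y)$ with $g_x,g_y \in k^{n\times m}$ satisfying the two intertwining relations $g_y\alpha_{R_m} = \alpha_{R_n}g_x$ and $g_y\beta_{R_m} = \beta_{R_n}g_x$. Taking $\lambda\neq\infty$ (so $\alpha = \mathbbm{1}$, $\beta = J(\lambda)$) these force $g_x = g_y$ and $g_x J_m(\lambda) = J_n(\lambda)g_x$, i.e.\ $g_x$ intertwines the two nilpotent-shifted Jordan blocks; a standard computation shows the space of such matrices has dimension $\min(m,n)$, spanned by ``staircase'' matrices, and I would record an explicit basis $f_1,\dots,f_{\min(m,n)}$. (The case $\lambda = \infty$ is handled symmetrically and gives the same answer, as do all $\lambda$ by the $\tau$-stability of regular components — Corollary \ref{cor:tau_isos} — if one prefers to fix a convenient $\lambda$.) Then $\dim\Hom = \min(m,n) =: d$, so $\ev[R_m(\lambda)][R_n(\lambda)]\colon R_m(\lambda)^{\,d}\to R_n(\lambda)$ and the source has dimension vector $(dm, dm)$, the target $(n,n)$.

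Next I would analyze the map $\ev$ itself. By construction its component at the vertex $x$ is the linear map $k^{dm}\to k^n$, $(m_1,\dots,m_d)\mapsto \sum_i (f_i)_x m_i$, and likewise at $y$ with $(f_i)_y = (f_i)_x$; since the $f_i$ span $\Hom$, the image of $\ev$ is all of $R_n(\lambda)$ — indeed already the single generator $f = \mathbbm{1}$-type map is surjective on the nose — so $\ev$ is an epimorphism and $\ker\ev$ has dimension vector $(dm - n,\, dm - n)$. This already pins down the total dimension: $d m - n = m(m-1)$ when $m\le n$, and $= m(n-1) + (m-n)$ when $m>n$, matching the claimed answers $(R_m(\lambda))^{m-1}$ and $R_{m-n}(\lambda)\oplus (R_m(\lambda))^{n-1}$ respectively. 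The real content is identifying the \emph{isomorphism type} of the kernel, not just its dimension.

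For that, I would show that $\ker\ev$, as a subrepresentation of $R_m(\lambda)^{\,d}$, is a direct sum of regular indecomposables with the \emph{same} parameter $\lambda$: this follows because the regular component $\mathcal R_\lambda$ is an abelian subcategory (a ``tube'') closed under subobjects, so any subrepresentation of $R_m(\lambda)^{\,d}$ decomposes as $\bigoplus_j R_{m_j}(\lambda)$. Then it remains to determine the multiset $\{m_j\}$. I would do this by computing the restriction of $\alpha$ (equivalently, the single Jordan block $J$ governing the regular structure) to the kernel subspace and reading off its Jordan type; concretely, with the staircase basis for $\Hom$ in hand, $\ker\ev_x\subseteq k^{dm}$ is the solution space of an explicit block-Toeplitz system, and the induced nilpotent (after subtracting $\lambda$) has a Jordan form one can compute from the staircase structure. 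The answer should be: when $m\le n$, the kernel is ``homogeneous'', $d-1 = m-1$ copies of $R_m(\lambda)$ (the top piece gets absorbed into the image); when $m>n$, only $n-1$ full copies survive and the defect $m - n$ shows up as a single shorter block $R_{m-n}(\lambda)$. An alternative, possibly cleaner route for the inductive step: use Corollary \ref{cor:ker_ev_t} together with a reduction on $m$ — e.g.\ relate $\ev[R_m][R_n]$ to $\ev[R_{m-1}][R_{n-1}]$ or to $\ev$ on a sub/quotient — to set up an induction whose base cases ($m=1$, or $m\le n$ with small $m$) are immediate; but I expect the direct linear-algebra computation of the Jordan type to be the cleanest to write down.

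The main obstacle is the last step: extracting the precise partition $\{m_j\}$ of $\ker\ev$ from the block-Toeplitz/staircase description. Getting the dimension is routine (surjectivity of $\ev$), and getting ``it lives in the tube $\mathcal R_\lambda$'' is structural, but separating the homogeneous bulk $(R_m(\lambda))^{\#}$ from the single anomalous summand $R_{m-n}(\lambda)$ in the $m>n$ case — and verifying there is exactly one such summand, of exactly that length — requires a careful bookkeeping of how the generators $f_1,\dots,f_d$ interact, i.e.\ a genuine rank computation on a structured matrix. I would organize this by an induction on $\min(m,n)$ (peeling off one generator at a time, each contributing one copy of $R_m(\lambda)$ to a complement of the kernel), with the $m>n$ versus $m\le n$ dichotomy emerging from whether the last generator's image is or is not already contained in the span of the previous ones.
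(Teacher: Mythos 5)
There is a genuine gap in two places, both concentrated in the dimension/structure step that you treat as ``routine.'' First, the claim that $\ev[R_m(\lambda)][R_n(\lambda)]$ is an epimorphism is false when $m<n$. In the staircase basis of $\Hom(R_m(\lambda),R_n(\lambda))$ every matrix has its nonzero entries in the top $m$ rows, so each $\phi_i$ has image contained in a fixed $R_m(\lambda)$ sitting as a subrepresentation of $R_n(\lambda)$; in particular no ``$\mathbbm{1}$-type'' map $k^m\to k^n$ can be onto when $m<n$. Consequently $\im\ev\cong R_m(\lambda)$ and $\coker\ev\cong R_{n-m}(\lambda)$, and the correct kernel dimension comes from $dm - m = m^2 - m$, not $dm - n = m^2 - n$. (Your formula ``$dm-n=m(m-1)$ when $m\le n$'' is only an identity when $m=n$, so as written the dimension count does not go through for $m<n$.) The paper obtains the right count by running the four-term exact sequence $0\to K\to R_m\otimes\Hom\to R_n\to R_{n-m}\to 0$, which is exactly where the cokernel term enters.

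Second, the structural step ``the regular component $\mathcal R_\lambda$ is closed under subobjects'' is not true for $\rep\mathsf{K}$: for instance $P_0\cong(0\toto k)$ is a subrepresentation of $R_1(\lambda)$ (take $U_x=0$, $U_y=k$), so regular modules do admit postprojective submodules. What you can say, and what makes the argument work, is: (i) there are no nonzero maps from preinjectives into regulars, so $K$ has no preinjective summands; (ii) each $P_j$ has dimension vector $(j,j+1)$, so the balanced dimension vector $\ddim K=(k,k)$ forces zero postprojective summands; (iii) by Lemma~\ref{lem:lamba=mu} a nonzero map $R_a(\mu)\to R_m(\lambda)$ forces $\mu=\lambda$, so the only regular summands have parameter $\lambda$. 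You need (ii), which in turn requires the corrected dimension vector from the first point. After those repairs, your remaining plan (read off the Jordan type of $\beta-\lambda$ on $K$) is a legitimate route and is in spirit what the paper does, but the paper carries it out by exhibiting an explicit basis $\mathcal B_i$ for each summand $S_i$, checking $S_i\subseteq K$, verifying $\bigcup\mathcal B_i$ is independent, matching dimensions via Lemma~\ref{lem:recognition}, and then computing $\beta_i$ on each basis to identify $S_i\cong R_m(\lambda)$ (or $R_{m-n}(\lambda)$ for the one anomalous summand in the $m>n$ case). That explicit bookkeeping is exactly the ``main obstacle'' you flag; as things stand your proposal defers it, so the isomorphism-type identification is not yet established.
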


\begin{theorem}\label{thm:ev(Rm,In)}
    Given a regular indecomposable representation $R_m(\lambda)$, $m \geq 1, \lambda \in k \cup \{\infty\}$, and a preinjective indecomposable representation $I_n, n \geq 0$, of the Kronecker quiver $\mathsf{K}$, we have that 
        \[\ker \ev[R_m(\lambda)][I_n] \cong \begin{cases} (R_m(\lambda))^{m-1},& m < n+1 \\ P_{m-n-1} \oplus (R_m(\lambda))^{m-1},& m \geq n+1\end{cases}\]
    All other kernels are equal to 0.
\end{theorem}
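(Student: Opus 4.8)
The plan is to compute $\ker\ev[R_m(\lambda)][I_n]$ directly from an explicit basis of $\Hom_{\mathsf{K}}(R_m(\lambda),I_n)$, using the matrix descriptions of the representations $R_m(\lambda)$ and $I_n$ recorded in the definition of the Kronecker quiver. First I would fix $\lambda\neq\infty$ (the case $\lambda=\infty$ being analogous after swapping the roles of $\alpha$ and $\beta$, or by a change of basis) so that $R_m(\lambda)$ has maps $\alpha=\mathbbm{1}_m$, $\beta=J_m(\lambda)$, and $I_n$ has maps $\alpha_{I_n}=[\,\mathbbm{1}_n\ 0\,]$, $\beta_{I_n}=[\,0\ \mathbbm{1}_n\,]$. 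A morphism $f=(f_x,f_y)\colon R_m(\lambda)\to I_n$ consists of matrices $f_x\in k^{n\times m}$, $f_y\in k^{n\times(n+1)}$ satisfying the two intertwining relations $f_y\alpha=\alpha_{I_n}f_x$ and $f_y\beta=\beta_{I_n}f_x$, i.e. $[\,\mathbbm{1}_n\ 0\,]f_x = f_y$ and $[\,0\ \mathbbm{1}_n\,]f_x = f_yJ_m(\lambda)$. I would solve this linear system to get a normal form for $f$ parametrized by a few free entries; counting these free parameters recovers $\dim\Hom_{\mathsf{K}}(R_m(\lambda),I_n)$, which I expect to be $m$ when $m<n+1$ and $n+1$ when $m\ge n+1$ — this can be cross-checked against Corollary~\ref{cor:tau_isos} and the known Hom-dimensions in the AR-quiver, and it already flags the dichotomy appearing in the statement.

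Next I would assemble the evaluation morphism. Writing $h=\dim\Hom_{\mathsf{K}}(R_m(\lambda),I_n)$ and picking a basis $f_1,\dots,f_h$ from the normal form, the map $\ev[R_m(\lambda)][I_n]\colon R_m(\lambda)^{\,h}\to I_n$ is given at vertex $x$ by the block matrix $[\,(f_1)_x\ \cdots\ (f_h)_x\,]\in k^{n\times mh}$ and at vertex $y$ by $[\,(f_1)_y\ \cdots\ (f_h)_y\,]\in k^{n\times(n+1)h}$. The kernel $K$ is the subrepresentation of $R_m(\lambda)^{\,h}$ with $K_x=\ker$ of the $x$-block and $K_y=\ker$ of the $y$-block, with the restricted $\alpha,\beta$ maps. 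Since each $f_i$ at the source vertex acts by an $n\times m$ matrix and the span of all the $(f_i)_x$ as linear functionals is what matters, I would argue that the $x$-component of the evaluation map is surjective (the $f_i$ restricted to $x$ already span $\Hom$ of the underlying vector-space data in the relevant range), pin down $\dim K_x=mh-n$ and $\dim K_y=(n+1)h-n$, hence $\ddim K=(mh-n,(n+1)h-n)$, and then identify how $\alpha|_K=\mathbbm{1}$ and $\beta|_K=J_m(\lambda)^{\oplus h}|_K$ decompose $K$ into Kronecker indecomposables. Comparing the dimension vector with $(R_m(\lambda))^{m-1}$, which has dimension vector $(m(m-1),m(m-1))$, and with $P_{m-n-1}\oplus (R_m(\lambda))^{m-1}$, whose dimension vector is $(m-n-1+m(m-1),\,m-n+m(m-1))$, should force the answer once the restricted operators are understood; in the regime $m\ge n+1$ the appearance of a single postprojective summand $P_{m-n-1}$ corresponds to the restricted pencil $(\alpha|_K,\beta|_K)$ having a nontrivial ``overflow'' block that is no longer square.

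The main obstacle I anticipate is controlling the restricted maps $\alpha|_K,\beta|_K$ finely enough to read off the indecomposable decomposition of $K$, rather than merely its dimension vector: one must show the kernel is exactly a direct sum of copies of $R_m(\lambda)$ (with one extra $P_{m-n-1}$ summand when $m\ge n+1$) and contains no preinjective or other-$\lambda$ regular summands. The cleanest route is probably to diagonalize, choosing the basis $f_1,\dots,f_h$ of $\Hom$ so that the blocks $(f_i)_x$ are ``staircase'' matrices; then $K_x$ is spanned by explicit vectors on which $\alpha$ and $\beta$ act blockwise, and one can exhibit an explicit isomorphism $K\cong (R_m(\lambda))^{m-1}$ (resp.\ $P_{m-n-1}\oplus(R_m(\lambda))^{m-1}$) by writing down the change of basis. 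Alternatively, one can invoke Corollary~\ref{cor:ker_ev_t}: since $\tau R_m(\lambda)\cong R_m(\lambda)$ and $\tau I_n\cong I_{n+1}$, applying $\tau$ repeatedly reduces the computation to the case of small $n$ (ultimately $I_0$, a simple), where the kernel is easy to compute by hand, and then one transports the answer back by $\tau^{-k}$ using that $\tau$ fixes the regular component and shifts the preinjective/postprojective indices predictably; the ``$P_{m-n-1}$'' summand then emerges precisely when the inductive descent crosses the boundary where $\tau^{-k}I_0$ would have to become postprojective. I would present the direct computation as the primary argument and remark on the $\tau$-equivariance as a consistency check, and finally note that $\lambda=\infty$ follows by the symmetry of the Kronecker quiver under interchanging the two arrows.
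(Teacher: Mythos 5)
Your overall strategy matches the paper's in spirit --- an explicit $\Hom$-basis computation at a base case, then $\tau$-descent via Corollary~\ref{cor:ker_ev_t} --- but several of your concrete assertions are wrong in ways that would derail the argument. First, $\dim\Hom(R_m(\lambda),I_n)=m$ for \emph{all} $n\geq 0$, not $\min(m,n+1)$: the Euler form gives $\langle(m,m),(n+1,n)\rangle=m(n+1)+mn-2mn=m$, and $\Ext^1(R_m(\lambda),I_n)$ vanishes because by Auslander--Reiten duality it is dual to $\Hom(\tau^{-1}I_n,R_m(\lambda))$, which is zero since morphisms only go $\mathcal{P}\to\mathcal{R}\to\mathcal{I}$. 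So the dichotomy in the statement does not come from a jump in $\dim\Hom$; it comes from whether the postprojective index $m-n-1$ is nonnegative. Your dimension bookkeeping also slips: since $R_m(\lambda)_y=k^m$, the $y$-component of the evaluation map is a matrix in $k^{n\times mh}$, not $k^{n\times(n+1)h}$, and surjectivity at vertex $x$ would give $\dim K_x=mh-(n+1)$, not $mh-n$. More seriously, surjectivity actually \emph{fails} precisely when $m<n+1$: the image of $\ev[R_m(\lambda)][I_n]$ then has dimension vector $(m,m)$ with cokernel $\cong I_{n-m}$, so the step where you ``pin down $\ddim K$ assuming surjectivity'' must itself split into cases, which your sketch does not address. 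Finally, $\tau I_n\cong I_{n+2}$, not $I_{n+1}$ (see Figure~\ref{fig:AR(K)}: $\tau I_0=I_2$), so a single $\tau$-descent to $I_0$ only reaches even $n$; you need a second base case, as the paper does with $I_1$, and the identity $\tau^k P_{m-1}=P_{m-1-2k}$ must be parity-matched accordingly.

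Beyond these, the genuinely hard part --- showing $K$ decomposes as exactly $P_{m-n-1}\oplus (R_m(\lambda))^{m-1}$ with no stray summands --- is deferred to ``diagonalize and read off blocks,'' which is where the real work lives. The paper's proof handles it by writing down, for each base case, explicit generating sets $\mathcal{B}_i$ for candidate subrepresentations $S_i\subseteq K$, checking each $\mathcal{B}_i\subseteq K$ pointwise, proving $\bigcup_i\mathcal{B}_i$ is linearly independent with total size equal to $\ddim K$ so that Lemma~\ref{lem:recognition} yields $K=\bigoplus_i S_i$, and then computing the restricted maps $\alpha|_{S_i}$, $\beta|_{S_i}$ on those bases to identify each summand as $P_{m-1}$ (respectively $P_{m-2}$ for the $I_1$ base case) or as $R_m(0)$. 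Until you carry out something of that concreteness --- and your ``staircase'' suggestion is only a starting point --- the decomposition claim is not established.
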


\begin{theorem}\label{thm:ev(Im,In)}
    Given two preinjective indecomposable representations $I_m, I_n, m,n \geq 0$, of the Kronecker quiver $\mathsf{K}$, we have that 
        \[\ker \ev[I_m][I_n] \cong (I_{m+1})^{m-n}, \quad m > n \geq 1.\]
    All other kernels are equal to 0.
\end{theorem}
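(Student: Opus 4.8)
The plan is to compute $\ker \ev[I_m][I_n]$ directly from the explicit matrix descriptions of the preinjective indecomposables, and then to invoke the machinery developed earlier to reduce the general case to a base case. First I would establish the $\Hom$-space $\Hom_{\mathsf K}(I_m, I_n)$: using $\ddim I_m = (m+1,m)$ and $\ddim I_n = (n+1,n)$, a morphism $f = (f_x, f_y)$ is a pair of matrices intertwining the pairs $(\alpha_{I_m},\beta_{I_m})$ and $(\alpha_{I_n},\beta_{I_n})$, where $\alpha_{I_m} = [\,\mathbbm 1_m \ | \ 0\,]$ and $\beta_{I_m} = [\,0 \ | \ \mathbbm 1_m\,]$. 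Solving the intertwining relations $\alpha_{I_n} f_x = f_y \alpha_{I_m}$ and $\beta_{I_n} f_x = f_y \beta_{I_m}$ should show $\Hom_{\mathsf K}(I_m, I_n) = 0$ unless $m \ge n$, in which case $\dim_k \Hom_{\mathsf K}(I_m, I_n) = m - n + 1$ (this is consistent with Corollary~\ref{cor:tau_isos}: shifting down to $\Hom(I_{m-n}, I_0)$, and $I_0 = S_x$ is the simple projective-free injective, so maps into it read off the cokernel-type data of $I_{m-n}$). This already explains why all kernels vanish when $m \le n$: if $m < n$ the $\Hom$-space is zero, and if $m = n$ the only maps are scalars on the (unique up to iso) indecomposable, so $\ev$ is a split surjection $I_m \to I_m$ with zero kernel.

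For the main case $m > n \ge 1$, I would first handle the base case $n = 0$, where $\Hom(I_m, I_0) \cong (I_m)_x^*$ has dimension $m+1$ by Proposition~\ref{prop:hom_isos}(b), so that $\ev[I_m][I_0] \colon I_m^{\,m+1} \to I_0$ is a map whose target is the simple at $x$. One computes its kernel by a dimension count plus an identification of the indecomposable summands: $\dim \ker = (m+1)\cdot(2m+1) - 1$, and since the kernel must be preinjective (a subrepresentation of a preinjective is preinjective, as there are no nonzero maps from $\mathcal P$ or $\mathcal R$ into $\mathcal I$), and the only summands of dimension vector compatible with the count are copies of $I_{m+1}$, one gets $\ker \ev[I_m][I_0] \cong (I_{m+1})^{m}$. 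Here $\ddim (I_{m+1})^m = m\cdot(m+2, m+1) = (m^2+2m, m^2+m)$, and indeed $\ddim I_m^{m+1} - \ddim I_0 = (m+1)(m+1,m) - (1,0) = (m^2+2m, m^2+m)$, confirming the formula in the case $n=0$ (note the theorem as stated restricts to $n \ge 1$, but the $n=0$ computation is the natural anchor). Then, for general $m > n \ge 1$, since $I_n = \tau^{-n} I_0$ and $I_m = \tau^{-n} I_{m-n}$ with none of the intermediate translates injective (the preinjectives other than the $I_0$-end are never injective-free-failing in the relevant direction — precisely, $\tau^{k+1} I_m, \tau^{k+1} I_n$ are non-injective for the needed range since $\tau^{-1}$ moves toward $I_0$ which is the injective end), Corollary~\ref{cor:ker_ev_t} gives $\ker \ev[I_m][I_n] \cong \tau^{-n} \ker \ev[\tau^n I_m][\tau^n I_n] = \tau^{-n}\ker\ev[I_{m-n}][I_0] \cong \tau^{-n}(I_{m-n+1})^{m-n} = (I_{m+1})^{m-n}$, using that $\tau^{-n} I_{m-n+1} = I_{m-n+1+n} = I_{m+1}$ and $m - n = (m-n) - 0$ copies.

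The main obstacle I expect is the base-case kernel computation $\ker \ev[I_m][I_0] \cong (I_{m+1})^m$: while the dimension count is immediate, pinning down that the kernel is actually a direct sum of copies of $I_{m+1}$ (rather than some other preinjective with the same total dimension vector, or a representation with regular/postprojective summands) requires care. I would argue this by (i) observing the kernel is a subrepresentation of $I_m^{m+1}$, hence has no postprojective or regular summands since $\Hom(\mathcal P, \mathcal I) = \Hom(\mathcal R, \mathcal I) = 0$; (ii) computing, via the explicit matrices, that the two structure maps $\alpha, \beta$ of the kernel are both injective (equivalently that $\ker \ev$ has no simple-at-$y$ summand $I_{-1}$-type degeneracy and no $P$-type summand), which forces each indecomposable summand to be of the form $I_j$ with the same "slope"; and (iii) matching dimension vectors to conclude all summands are $I_{m+1}$. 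An alternative, possibly cleaner route for (ii)–(iii) is to dualize: apply the duality $D = \Hom_k(-,k)$ which sends $\rep \mathsf K$ to $\rep \mathsf K^{\op} \cong \rep \mathsf K$ and interchanges $\mathcal I \leftrightarrow \mathcal P$, turning $\ker \ev[I_m][I_0]$ into a cokernel-type computation for postprojectives that mirrors Theorem~\ref{thm:ev(Pm,N)}(a); consistency with that already-stated theorem would then serve as a strong check. Once the base case is secured, the reduction via Corollary~\ref{cor:ker_ev_t} and the bookkeeping of $\tau$-powers is routine.
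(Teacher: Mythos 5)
Your overall plan (reduce via Corollary~\ref{cor:ker_ev_t} to a boundary case, then compute the boundary kernel directly) is reasonable in spirit and is indeed the strategy the paper uses for Theorem~\ref{thm:ev(Pm,N)}. However, the paper's actual proof of Theorem~\ref{thm:ev(Im,In)} does \emph{not} use a $\tau$-reduction; it constructs, for arbitrary $m > n$, explicit subrepresentations $S_1, \dots, S_{m-n}$ of $\ker \ev[I_m][I_n]$ inside $I_m \otimes \Hom(I_m,I_n)$, shows via Lemma~\ref{lem:recognition} that their direct sum exhausts the kernel, and then identifies each $S_i \cong I_{m+1}$ by exhibiting its structure maps. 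Your proposal is therefore a genuinely different route, but it contains several gaps that would need to be fixed before it could work.

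First, the $\tau$-arithmetic is wrong. For the Kronecker quiver $\tau$ moves along the preinjective component by \emph{two} indices, not one: $\tau I_n = I_{n+2}$ and $\tau^{-1} I_n = I_{n-2}$ (as used explicitly in the proofs of Theorems~\ref{thm:ev(Pm,N)} and \ref{thm:ev(Rm,In)}, and visible in Figure~\ref{fig:AR(K)}). Thus $I_n \ne \tau^{-n} I_0$ for $n \ge 1$; instead $I_n = \tau^{n/2} I_0$ for $n$ even and $I_n = \tau^{(n-1)/2} I_1$ for $n$ odd. Consequently, one base case $\ker \ev[I_{m-n}][I_0]$ does not suffice -- you would need a second base case $\ker \ev[?][I_1]$ to cover odd $n$, mirroring the $P_0$/$P_1$ split in the proof of Theorem~\ref{thm:ev(Pm,N)}. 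Once the index arithmetic is corrected, the conditions in Corollary~\ref{cor:ker_ev_t} ($\tau^{k+1} I_m$ and $\tau^{k+1} I_n$ not injective) do hold in the needed range, so the reduction itself is sound.

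Second, and more seriously, the argument you sketch for the base case (your point~(i)) is based on a false premise: you assert $\Hom(\mathcal{P}, \mathcal{I}) = \Hom(\mathcal{R}, \mathcal{I}) = 0$, but the paper (and standard Kronecker theory) says the opposite: nonzero morphisms between the three components flow $\mathcal{P} \to \mathcal{R} \to \mathcal{I}$. In particular $P_0 = (0 \toto k)$ and $R_1(\lambda)$ both embed into $I_1$, so subrepresentations of preinjectives need not be preinjective. The preinjective component is closed under quotients, not submodules, so the fact that $\ker \ev[I_m][I_0]$ is a submodule of a preinjective gives you nothing by itself. Moreover, Table~\ref{table:patterns} shows this concern is not hypothetical: $\ker \ev[R_m(\lambda)][I_n]$ contains a postprojective summand $P_{m-n-1}$, precisely the kind of contamination you are claiming cannot happen.

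Third, your alternative (ii) -- showing the structure maps $\alpha, \beta$ of the kernel are both injective -- would actually \emph{contradict} the intended conclusion: every $I_j$ has $\alpha_{I_j} = [\mathbbm{1}_j\ 0]$ and $\beta_{I_j} = [0\ \mathbbm{1}_j]$, both with one-dimensional kernel, so a direct sum $(I_{m+1})^{m-n}$ has non-injective structure maps. (Injective $\alpha, \beta$ is a signature of postprojectives, not preinjectives.) The duality suggestion in (iii) is a plausible line of attack in principle, but $D$ converts $\ev[I_m][I_0]$ into a coevaluation map $P_0 \to P_m \otimes \Hom(P_0, P_m)^{*}$ whose cokernel is not directly what is computed in Theorem~\ref{thm:ev(Pm,N)}(a), so it would need more care than the proposal indicates. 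To close the gap honestly you would have to do something like the paper does: produce explicit elements of the kernel, verify linear independence, match dimension vectors, and then read off $\alpha_i, \beta_i$ on the resulting summands to identify them as $I_{m+1}$.
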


It is clear from these theorems that certain subcategories satisfy the conditions of Theorem~\ref{cor:specialC}. For example, the full subcategory of all postprojectives $\{P_n\}_{n \in \Z}$, the full subcategory of all postprojectives $\{I_n\}_{n \in \Z}$, and the full subcategory of all postprojectives $\{R_n(\lambda)\}_{n \in \Z, \lambda \in k}$.

The following is a conjecture which supposes the existence of one class of examples of subcategories of $\rep_{k} \mathsf{K}$ that satisfy the conditions of the theorem. 

\begin{proposition}\label{conj:specialC_Kronecker}
    Let $\mathcal{X} \subseteq k \cup \{\infty\}$ be any subset. Define $\mathcal{C}_{\mathcal{X}} \subseteq \rep_k \mathsf{K}$ to be the full additive subcategory generated by the indecomposable representations 
        \[\{P_n,\, I_n,\, R_n(\lambda) \mid n \in \Z_{\geq 0},\ \lambda \in \mathcal{X}\},\]
    
    that is,
        \[\mathcal{C}_{\mathcal{X}} = \operatorname{add}\left(\{P_n,\, I_n,\, R_n(\lambda) \mid n \in \Z_{\geq 0},\ \lambda \in \mathcal{X}\}\right).\]
        
    Then \(\mathcal{C}_{\mathcal{X}}\) satisfies the hypotheses of Theorem~\ref{cor:specialC}.
\end{proposition}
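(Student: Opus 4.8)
The plan is to verify the two hypotheses of Theorem~\ref{cor:specialC}: that $\mathcal{C}_{\mathcal{X}}$ is closed under direct sums and that it contains $\ker(\ev[M][N])$ for all $M, N \in \mathcal{C}_{\mathcal{X}}$. Closure under direct sums is immediate from the definition of $\mathcal{C}_{\mathcal{X}} = \operatorname{add}(\cdots)$, so the content lies entirely in the second condition. First I would reduce the kernel condition to the case where $M$ and $N$ are indecomposable. By the additivity proposition for the evaluation morphism, for $M = \bigoplus_i M_i$ we have $M \otimes_k \Hom_Q(M,N) \cong \bigoplus_i (M_i \otimes_k \Hom_Q(M_i, N))$ with $\ev[M][N] = \sum_i \ev[M_i][N]$, hence $\ker \ev[M][N] \cong \bigoplus_i \ker \ev[M_i][N]$. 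A dual decomposition in the target handles $N = \bigoplus_j N_j$: since $\Hom_Q(M_i, N) = \bigoplus_j \Hom_Q(M_i, N_j)$, the map $\ev[M_i][N]$ is the ``row vector'' $(\ev[M_i][N_j])_j$ followed by inclusions, and one checks $\ker \ev[M_i][N] \cong \bigoplus_j \ker\ev[M_i][N_j]$ only when the mixed $\Hom$'s vanish appropriately — so I would instead argue directly that $\ker$ of a map into $\bigoplus_j N_j$ whose components are the $\ev[M_i][N_j]$ decomposes as needed, using that for the Kronecker quiver the relevant mixed Hom-spaces between different AR-components vanish in the directions that matter (e.g.\ $\Hom(I_m, P_n) = 0$, $\Hom(R_m(\lambda), P_n) = 0$, $\Hom(I_m, R_n(\mu)) = 0$, and $\Hom(R_m(\lambda), R_n(\mu)) = 0$ for $\lambda \ne \mu$). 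This reduces everything to checking that $\ker \ev[M][N] \in \mathcal{C}_{\mathcal{X}}$ for $M, N$ each one of $P_m$, $I_m$, $R_m(\lambda)$ with $\lambda \in \mathcal{X}$.

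Next I would simply read off the answer from the Pattern Theorems (Theorems~\ref{thm:ev(Pm,N)}--\ref{thm:ev(Im,In)}) and Table~\ref{table:patterns}, verifying in each of the six entries that every indecomposable summand of the kernel is again of the allowed form. The crucial observation is that the kernels never introduce a regular indecomposable with a new eigenvalue: $\ker\ev[P_m][R_n(\lambda)] \cong (P_{m-1})^n$ is postprojective; $\ker\ev[R_m(\lambda)][R_n(\lambda)]$ involves only $R_\bullet(\lambda)$ with the \emph{same} $\lambda$, which is in $\mathcal{X}$ by assumption; $\ker\ev[R_m(\lambda)][I_n]$ is either a power of $R_m(\lambda)$ or that plus a postprojective $P_{m-n-1}$; and the $P$-$P$, $P$-$I$, $I$-$I$ cases stay within the postprojective and preinjective families. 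Since $\{P_n\}_{n\geq 0}$, $\{I_n\}_{n\geq 0}$, and $\{R_n(\lambda)\}_{n \geq 0}$ for any fixed $\lambda \in \mathcal{X}$ are all among the generators of $\mathcal{C}_{\mathcal{X}}$, each kernel is a finite direct sum of generators, hence lies in $\mathcal{C}_{\mathcal{X}}$.

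The main obstacle, such as it is, is the reduction to indecomposables in the general case: one must be careful that $\ker$ does not interact badly when both $M$ and $N$ are decomposable and the cross Hom-spaces do not all vanish — for instance with $M = P_a \oplus R_b(\lambda)$ and $N = P_c$, where $\Hom(R_b(\lambda), P_c) = 0$ causes the $R_b(\lambda)$-block of $\ev$ to be zero and contribute its whole source to the kernel. I would handle this by writing $\ev[M][N]$ as a block matrix indexed by indecomposable summands of $M$ (rows) and of $N$ (columns), observing that the source splits as a direct sum over rows with the $i$-th row-block being the map $(\ev[M_i][N_j])_j \colon M_i \otimes \Hom(M_i, N) \to \bigoplus_j N_j$, and then noting $\ker\ev[M][N] = \bigoplus_i \ker\big((\ev[M_i][N_j])_j\big)$; each such kernel is a kernel of a map out of the indecomposable-source representation $M_i \otimes \Hom(M_i,N)$, and I would argue it is a direct sum of the $\ker\ev[M_i][N_j]$ over those $j$ with $\Hom(M_i, N_j) \neq 0$, plus trivial contributions — all of which land in $\mathcal{C}_{\mathcal{X}}$ by the indecomposable case. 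An alternative, cleaner route is to invoke Corollary~\ref{cor:ker_ev_t} to shuttle between AR-components when convenient, but the Pattern Theorems already give the kernels outright, so the bookkeeping above suffices.
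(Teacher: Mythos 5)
The reduction to indecomposable summands is where the gap lies, and it occurs twice in slightly different guises. First, the isomorphism $M \otimes_k \Hom_Q(M,N) \cong \bigoplus_i \bigl(M_i \otimes_k \Hom_Q(M_i, N)\bigr)$ that you quote from the paper's additivity proposition is false on dimension grounds: for $M = M_1 \oplus M_2$ with $M_1 \cong M_2 \cong I_2$ and $N = I_1$, the left side has dimension vector $\dim\Hom(M,N)\cdot\ddim M = 4\cdot(6,4) = (24,16)$, while the right side is $I_2^4$ with dimension vector $(12,8)$. The correct expansion of the source is $\bigoplus_{i,i',j}\, M_i \otimes \Hom(M_{i'}, N_j)$; the evaluation vanishes on the cross terms ($i\ne i'$), which therefore lie entirely in the kernel, and on the diagonal blocks it acts as $\ev[M_i][N_j]$. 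Second, and more seriously, even after absorbing the cross terms (which are harmlessly direct sums of copies of the $M_i$, hence in $\mathcal{C}_{\mathcal X}$), the diagonal restriction of $\ev[M][N]$ over each fixed indecomposable target $N_j$ is a \emph{sum} $\sum_i \ev[M_i][N_j]$ of maps into the common target $N_j$, and the kernel of a sum of maps into one target is not the direct sum of the individual kernels whenever the images overlap. In the example above, $\ker\ev[I_2][I_1] \cong I_3$ with $\ddim = (4,3)$, so your scheme --- cross terms $I_2^4$ plus $\ker\ev[I_2][I_1]^{\oplus 2} = I_3^2$ --- predicts $\ddim = (12,8)+(8,6) = (20,14)$; but the true kernel has $\ddim = (24,16) - (2,1) = (22,15)$, and a change of variables identifies it as $I_2^6 \oplus I_3$. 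The answer is still preinjective, so the proposition is not endangered, but the Pattern Theorems combined with the claimed additivity do not yield the conclusion.

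For comparison, the paper's own proof sidesteps any reduction to indecomposables: it invokes that $\rep_k\mathsf{K}$ is hereditary and Krull--Schmidt, that the postprojective component is closed under submodules, that morphisms between regular indecomposables of distinct eigenvalue vanish, and asserts that the preinjective component is likewise closed under submodules. That last assertion is not literally true --- the paper's own example $\ker\bigl(f\colon I_1\to I_0\bigr) \cong R_1(\lambda)$ exhibits a regular submodule of a preinjective --- so the paper's argument also leaves a gap at precisely the same place yours does. What neither proof actually establishes is that when $M$ (or $N$) has several preinjective summands, the subrepresentation $\ker\ev[M][N]$ cannot pick up a regular summand with eigenvalue outside $\mathcal{X}$. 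The Pattern Theorems settle this for indecomposable $M$ and $N$ (your six-case table is the right checklist), and the structural facts restrict the possibilities when $M$ is purely postprojective or regular, but the preinjective case for decomposable $M,N$ requires either a correct kernel calculus for sums of evaluation maps or a sharper structural argument (for instance, using that the preinjective component is closed under extensions in the hereditary tame setting and exhibiting $\ker\ev[M][N]$ as an iterated extension of preinjectives). That is the missing piece.
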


\begin{proof}
    By construction, the category $\mathcal{C}_{\mathcal{X}}$ is a subcategory of $\rep_k \mathsf{K}$ that is closed under direct sums. It only remains to show that $\ker(\ev[M][N]) \in \mathcal{C}_{\mathcal{X}}$ for all pairs of objects $M, N \in \mathcal{C}_{\mathcal{X}}$.\\

    Let $M, N \in \mathcal{C}_{\mathcal{X}}$. Then, by definition, $M \cong \oplus_{i=1}^n M_i$ and $N \cong \oplus_{i=1}^n N_i$ where $M_i, N_i \in \{P_n,\, I_n,\, R_n(\lambda) \mid n \in \Z_{\geq 0},\ \lambda \in \mathcal{X}\}$. \\

    The subcategory $\mathcal{C}_{\mathcal{X}}$ is by definition the full additive subcategory generated by all $P_n$, $I_n$, and those regular indecomposables $R_n(\lambda)$ for which $\lambda \in \mathcal{X} \subseteq k \cup \{\infty\}$.

    Our goal is to show that $\mathcal{C}_{\mathcal{X}}$ is closed under taking kernels of morphisms between its objects.\\
    
    It is known that the category $\rep_k \mathsf{K}$ of finite-dimensional representations of $\mathsf{K}$ is hereditary. Thus, subrepresentations  of representations in $\rep_k \mathsf{K}$ are objects of $\rep_k \mathsf{K}$. We know that kernels of morphisms are subrepresentations, so $\ker \ev[M][N] \in \rep_k \mathsf{K}$ for every $M, N \in \mathcal{C}_{\mathcal{X}}$. Furthermore, because $\mathsf{K}$ is also Krull-Schmidt, the kernel decomposes into a direct sum of indecomposable representations.
    
    The postprojective components $\{P_n\}$ and preinjective components $\{I_n\}$ are well-known to be closed under taking submodules. This means that any subrepresentation of a postprojective representation is again postprojective, and similarly for preinjective representations. Hence, any kernel of a morphism between sums of $P_n$ and $I_n$ lies in the additive closure of these indecomposables. This additive closure is clearly contained in $\mathcal{C}_{\mathcal{X}}$.
    
    The kernel of the evaluation morphism between two regular indecomposable representations  $R_m(\lambda)$ and $R_n(\lambda)$ is nonzero exactly when they have the same parameter $\lambda$. Thus, if we restrict to the subcategory $\mathcal{C}_{\mathcal{X}}$ which only includes regular representations with parameters $\lambda \in \mathcal{X}$, the kernel of the evaluation morphism between objects in $\mathcal{C}_{\mathcal{X}}$ must decompose into a direct sum of indecomposables $R_m(\lambda)$ with $\lambda \in \mathcal{X}$. No regular summands with parameters outside $\mathcal{X}$ can appear in kernels of morphisms within $\mathcal{C}_{\mathcal{X}}$.
    
    Now, any object in $\mathcal{C}_{\mathcal{X}}$ is a finite direct sum of indecomposables of the three types described above, with regular summands restricted to parameters $\lambda$ in $\mathcal{X}$. Since kernels are subrepresentations, the kernel of a morphism between two such objects decomposes into a direct sum of indecomposables, each coming from one of these three classes and respecting the parameter restrictions on regular summands.\\
    
    Hence, the kernel remains in $\mathcal{C}_{\mathcal{X}}$.
\end{proof}

The reader may refer to either \cite{ARS} or \cite{ASS2} for extensive details on the properties of the indecomposable representations of $\mathsf{K}$.

\begin{remark}
    We are particularly interested in the case where $\mathcal{X}=\{0, 1, \infty\}$ because in that setting, all morphisms in the subcategory $\mathcal{C}_{\mathcal{X}}$ can be presented by matrices using only 0s and 1s as entries, as is often the case for persistence modules.
\end{remark}

Our analysis of the Kronecker quiver demonstrates that various naturally occurring subcategories satisfy the hypotheses of our refined theorem. The behavior of the kernel of the evaluation morphism and its compatibility with the Auslander-Reiten translation suggest a robust framework for detecting isomorphism classes within completely decomposable subcategories. These results offer concrete examples and provide evidence for the applicability of our results to other types of quivers.

\clearpage
\appendix
\section{Proofs of Pattern Theorems}
\label{appendix:pattern-proofs}
This appendix contains detailed proofs of the Pattern Theorems stated in Section~\ref{sec:pattern-theorems}.
\subsection{Helpful Lemmas}
We begin here by collecting some computations as lemmas which we will be necessary in order to provide the Pattern Theorems. The lemmas here will be useful for all the proofs in this section and lemmas needed for a particular subsection will be stated and proved at the end of each subsection. Whenever necessary, computations are done in the derived category $\rep_{k} \mathsf{K}$ by embedding $\rep_{k} \mathsf{K}$ into $D^b(\rep_{k} \mathsf{K})$ as described in Section~\ref{sec:derivedcateg}.

The next lemmas from \cite{Barot} will provide justification for the choices of $m, n$ in Table~\ref{table:patterns}. The domain of the evaluation involves a tensor with the Hom-space $\Hom(M,N)$. When this Hom-space is $0$, the evaluation morphism becomes the map $0 \to N$ and thus has an uninteresting kernel. These lemmas state that the morphisms in the AR quiver for $\mathsf{K}$ can only go in left to right in the ordering of indecomposable components $\mathcal{P} \rightarrow \mathcal{R} \rightarrow \mathcal{I}$.

\begin{lemma}
If $M$ is an indecomposable representation of $\mathsf{K}$ such that there exists a non-zero morphism $f: M \to P_n$, then $M$ is isomorphic to $P_m$ for some $m \leq n$. If $m \leq n$, the space $\Hom_Q(P_m, P_n)$ has dimension $n-m+1$.
\end{lemma}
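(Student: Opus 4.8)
The plan is to split the statement into a classification step — any indecomposable mapping nontrivially to $P_n$ must be postprojective — and a dimension count. For the classification I would use the trichotomy $\operatorname{ind}(\rep_k\mathsf{K}) = \mathcal{P}\sqcup\mathcal{R}\sqcup\mathcal{I}$ together with the directedness of $\AR(\mathsf{K})$ invoked just before the lemma: there are no nonzero morphisms from a regular or a preinjective indecomposable into a postprojective one, i.e.\ $\Hom_Q(R_a(\lambda),P_n)=0$ and $\Hom_Q(I_a,P_n)=0$ for all admissible indices. This can be cited from \cite{Barot} (or \cite{ASS2}), or proved by the same matrix bookkeeping used below: writing a candidate morphism into $P_n$ in coordinates and using that $\alpha_{P_n}=\begin{bsmallmatrix}\mathbbm{1}_n\\0\end{bsmallmatrix}$ and $\beta_{P_n}=\begin{bsmallmatrix}0\\\mathbbm{1}_n\end{bsmallmatrix}$ are injective with complementary images forces every component to vanish. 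Granting this, any indecomposable $M$ with $\Hom_Q(M,P_n)\neq 0$ lies outside $\mathcal{R}\cup\mathcal{I}$, so $M\cong P_m$ for some $m\geq 0$; the inequality $m\leq n$ then drops out of the dimension formula, since the formula also shows $\Hom_Q(P_m,P_n)=0$ whenever $m>n$.

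For the dimension formula I would compute $\Hom_Q(P_m,P_n)$ directly from the explicit model. A morphism $\phi\colon P_m\to P_n$ is a pair $(\phi_x,\phi_y)$ with $\phi_x\in k^{n\times m}$, $\phi_y\in k^{(n+1)\times(m+1)}$ satisfying $\phi_y\begin{bsmallmatrix}\mathbbm{1}_m\\0\end{bsmallmatrix}=\begin{bsmallmatrix}\mathbbm{1}_n\\0\end{bsmallmatrix}\phi_x$ and $\phi_y\begin{bsmallmatrix}0\\\mathbbm{1}_m\end{bsmallmatrix}=\begin{bsmallmatrix}0\\\mathbbm{1}_n\end{bsmallmatrix}\phi_x$. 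The first relation forces the first $m$ columns of $\phi_y$ to be the columns of $\phi_x$ each with a zero appended at the bottom; the second forces the last $m$ columns of $\phi_y$ to be the columns of $\phi_x$ each with a zero prepended at the top. In particular $\phi_y$ is completely determined by $\phi_x$, and equating the two descriptions on the overlapping columns $2,\dots,m$ forces (i) the first and last rows of $\phi_x$ to vanish except at the corner entries $(1,1)$ and $(n,m)$, and (ii) the Toeplitz relation $(\phi_x)_{i+1,j}=(\phi_x)_{i,j-1}$, so $\phi_x$ is constant along each diagonal parallel to the main diagonal. An $n\times m$ matrix has $n+m-1$ such diagonals; condition (i) annihilates all of them except the $n-m+1$ running from the $(1,1)$ corner to the $(n,m)$ corner (the diagonals of ``index'' $i-j\in\{0,1,\dots,n-m\}$), each contributing one free parameter. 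Hence $\dim_k\Hom_Q(P_m,P_n)=n-m+1$ when $m\leq n$ and $0$ when $m>n$, which simultaneously finishes the classification step.

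As a cross-check that bypasses the diagonal bookkeeping, one can use that $\mathsf{K}$ is hereditary, so $\dim\Hom_Q(P_m,P_n)-\dim\operatorname{Ext}^1_Q(P_m,P_n)=\langle\ddim P_m,\ddim P_n\rangle$; the Euler form of $\mathsf{K}$ gives $\langle(m,m+1),(n,n+1)\rangle = mn+(m+1)(n+1)-2m(n+1)=n-m+1$, and $\operatorname{Ext}^1_Q(P_m,P_n)\cong D\,\overline{\Hom}_Q(P_n,\tau P_m)=D\,\overline{\Hom}_Q(P_n,P_{m-2})=0$ for $m\leq n$ by the Auslander--Reiten formula together with $\tau P_m=P_{m-2}$ (read off $\AR(\mathsf{K})$) and $\Hom_Q(P_n,P_{m-2})=0$ for $n\geq m$. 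I expect the only genuine care needed is the index bookkeeping in the Toeplitz count — in particular checking that the two corner entries survive precisely when $m\leq n$ — and, if the Euler-form route is used, ensuring the input $\Hom_Q(P_n,P_{m-2})=0$ is obtained independently (from the direct computation or by citation) rather than circularly.
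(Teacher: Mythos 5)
Your proposal is sound and, in one respect, does more than the paper: the paper does not prove this lemma at all but cites \cite{Barot} for both the classification and the Hom-space description, whereas you give a self-contained argument. The Toeplitz/diagonal bookkeeping is correct. Writing $\phi=(\phi_x,\phi_y)$ and matching the first $m$ columns of $\phi_y$ (a zero row appended below $\phi_x$) against the last $m$ columns (a zero row prepended above $\phi_x$), one gets $(\phi_x)_{1,j}=0$ for $j\geq 2$, $(\phi_x)_{n,j}=0$ for $j\leq m-1$, and constancy along the $n+m-1$ diagonals $i-j=\text{const}$; the surviving diagonals are exactly those with $i-j\in\{0,\dots,n-m\}$, giving dimension $\max(n-m+1,0)$. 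This simultaneously yields the bound $m\leq n$, so the quantitative part also delivers the qualitative restriction within $\mathcal{P}$. The Euler-form cross-check is computed correctly, and you are right to flag the circularity of feeding $\Hom_Q(P_n,P_{m-2})=0$ into the AR formula without independent input.

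The one genuinely shaky phrase is in the sketched alternative for the classification step: $\alpha_{P_n}=\begin{bsmallmatrix}\mathbbm{1}_n\\0\end{bsmallmatrix}$ and $\beta_{P_n}=\begin{bsmallmatrix}0\\\mathbbm{1}_n\end{bsmallmatrix}$ are injective, but their images in $k^{n+1}$ are \emph{not} complementary — they intersect in a subspace of dimension $n-1$, and only their sum is all of $k^{n+1}$. The vanishing $\Hom_Q(I_a,P_n)=\Hom_Q(R_a(\lambda),P_n)=0$ does follow from the same column-by-column equating you use for $P_m\to P_n$ (for $I_a\to P_n$ one finds every anti-diagonal of $\phi_x$ meets a forced-zero boundary row or column, so $\phi_x=0$), but it is not a one-line consequence of any injectivity/complementarity slogan. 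Since you offer citation as the primary route here, this is a cosmetic rather than structural flaw; I'd simply replace the slogan with either the explicit calculation or the citation, and the proof is complete.
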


\begin{lemma}
If $N$ is an indecomposable representation of $\mathsf{K}$ such that there exists a non-zero morphism $f: I_m \to N$, then $N$ is isomorphic to $I_n$ for some $n \leq m$. If $n \leq m$, the space $\Hom(I_m, I_n)$ has dimension $m-n+1$.
\end{lemma}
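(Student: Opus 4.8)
The plan is to deduce this lemma from the previous one by applying the standard $k$-duality. Let $D=\Hom_k(-,k)$ be the usual contravariant exact functor from $\rep_k\mathsf{K}$ to $\rep_k\mathsf{K}^{\mathrm{op}}$. Reversing and relabelling the two parallel arrows identifies $\mathsf{K}^{\mathrm{op}}$ with $\mathsf{K}$, so after this identification $D$ becomes a contravariant self-equivalence of $\rep_k\mathsf{K}$ with $D\circ D\cong\mathrm{id}$; because the identification swaps the two vertices, $D$ interchanges the two entries of a dimension vector. Hence $D I_n$ is an indecomposable representation of $\mathsf{K}$ with $\ddim D I_n=(n,n+1)$, and by the classification of the indecomposable representations of $\mathsf{K}$ the only indecomposable with that dimension vector is $P_n$; so $D I_n\cong P_n$, and applying $D$ once more, $D P_n\cong I_n$. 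In particular $D$ interchanges the preinjective and postprojective components $\mathcal I$ and $\mathcal P$.

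Granting this, the first assertion is immediate: a nonzero morphism $f\colon I_m\to N$ with $N$ indecomposable produces a nonzero morphism $Df\colon DN\to D I_m\cong P_m$ with $DN$ indecomposable, so by the previous lemma $DN\cong P_n$ for some $n\le m$, and applying $D$ again gives $N\cong D P_n\cong I_n$. For the dimension count I would use that $D$ is a bijection on morphism spaces, so that for $n\le m$
\[
\Hom_Q(I_m,I_n)\;\cong\;\Hom_Q(D I_n,D I_m)\;\cong\;\Hom_Q(P_n,P_m),
\]
and the last space has dimension $m-n+1$ by the previous lemma; note that the contravariance of $D$ exchanges the two indices, turning $\dim\Hom_Q(P_a,P_b)=b-a+1$ into $\dim\Hom_Q(I_m,I_n)=m-n+1$. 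When $n>m$ there is no nonzero morphism $I_m\to I_n$, by the first part applied with $N=I_n$ together with the fact that the $I_j$ are pairwise non-isomorphic.

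The only step beyond bookkeeping is the identification $D I_n\cong P_n$, which amounts to there being a \emph{unique} indecomposable of $\mathsf{K}$ with dimension vector $(n,n+1)$; this is precisely what the description of the postprojective family $\mathcal P$ supplies, so nothing new is needed. If one wished to avoid duality, an alternative is to note that $\mathsf{K}$ is hereditary of tame type, so every indecomposable lies in $\mathcal P$, $\mathcal R$, or $\mathcal I$ and $\Hom$ vanishes from $\mathcal I$ into $\mathcal P\cup\mathcal R$, forcing $N\cong I_n$; one then computes $\Hom(I_m,I_n)$ directly from the matrices $\alpha_{I_j}=[\mathbbm 1_j\ 0]$, $\beta_{I_j}=[0\ \mathbbm 1_j]$, whose commutation relations collapse to a shift relation with solution space of dimension $m-n+1$ for $n\le m$ and $0$ otherwise (equivalently, combine $\operatorname{Ext}^1(I_m,I_n)=0$ for $n\le m$, which follows from Auslander--Reiten duality and the fact that $\tau$ carries $I_m$ away from the mouth $\{I_0,I_1\}$, with the Euler-form value $\langle\ddim I_m,\ddim I_n\rangle=m-n+1$). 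I expect the main, quite minor, obstacle to be keeping the two indices straight through the contravariance, since a slip there would produce $n-m+1$ in place of $m-n+1$.
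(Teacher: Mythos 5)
The paper does not prove this lemma; it is stated as one of a pair of facts imported from \cite{Barot}, where they would most likely be obtained by a direct matrix computation of the commutation relations (essentially your ``alternative'' route, which yields the bases recorded in Lemma~\ref{lem:Hom_mats}). Your duality argument is a genuinely different and cleaner derivation: it reduces the preinjective statement to the postprojective one via the contravariant self-equivalence $D=\Hom_k(-,k)$, and the index bookkeeping checks out --- $D$ swaps the entries of $\ddim$, so $D I_n$ is indecomposable of dimension vector $(n,n+1)$, hence $D I_n\cong P_n$; nonzero $f\colon I_m\to N$ dualizes to nonzero $DN\to P_m$, so $DN\cong P_n$ with $n\le m$ and $N\cong I_n$; and $\Hom(I_m,I_n)\cong\Hom(P_n,P_m)$ has dimension $m-n+1$ by the companion lemma. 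The one step you should make fully explicit is that $P_n$ is the \emph{unique} indecomposable of $\mathsf{K}$ with dimension vector $(n,n+1)$: this uses not just the description of $\mathcal P$ but the completeness of the trichotomy $\mathcal P\cup\mathcal R\cup\mathcal I$ together with the fact that $\mathcal R$-indecomposables have dimension vectors $(k,k)$ and $\mathcal I$-indecomposables have $(k+1,k)$, so no other component can supply a competitor. With that observation stated, the proof is complete; the duality approach buys you the ``for free'' transfer of the first lemma and avoids redoing the computation, at the cost of appealing to the classification of indecomposables (which the direct computation does not need).
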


\begin{lemma}\label{lem:lamba=mu}
If $f:R_m(\lambda) \to R_n(\mu)$ is a non-zero morphism for $\lambda, \mu \in k \cup \{\infty\}$, then $\lambda = \mu$. The $\Hom$-space $\Hom_Q(R_m(\lambda), R_n(\lambda))$ has dimension $\min(m,n)$.
\end{lemma}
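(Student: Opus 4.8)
\textbf{Proof proposal for Lemma~\ref{lem:lamba=mu}.}

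The plan is to reduce the claim about arbitrary $m,n$ to the key case $m=n=1$, then handle that base case directly. First I would recall that a morphism $f\colon R_m(\lambda)\to R_n(\mu)$ in $\rep_k\mathsf{K}$ consists of a pair of linear maps $f_x\colon k^m\to k^n$ and $f_y\colon k^m\to k^n$ intertwining the structure maps, i.e.\ $f_y\,\alpha_{R_m(\lambda)}=\alpha_{R_n(\mu)}\,f_x$ and $f_y\,\beta_{R_m(\lambda)}=\beta_{R_n(\mu)}\,f_x$. Using the explicit matrix descriptions from the Kronecker quiver definition (with $\alpha=\mathbbm{1}$, $\beta=J(\lambda)$ in the finite case, and the swapped roles for $\lambda=\infty$), the first relation gives $f_y=f_x$ when $\lambda,\mu\ne\infty$, and then the second relation becomes the matrix Sylvester-type equation $f_x\,J_m(\lambda)=J_n(\mu)\,f_x$. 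The standard fact about intertwiners of Jordan blocks (solutions of $XA=BX$ for $A,B$ single Jordan blocks) is that the solution space is zero unless the eigenvalues agree, which forces $\lambda=\mu$; the cases involving $\infty$ are symmetric after swapping $\alpha$ and $\beta$, and the mixed case $\lambda=\infty\ne\mu$ (or vice versa) can be checked to admit only $f=0$ since one structure map is invertible on one side but nilpotent on the other.

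Once $\lambda=\mu$ is established, I would compute $\dim_k\Hom_Q(R_m(\lambda),R_n(\lambda))$. Here I would invoke the Auslander--Reiten machinery already available in the paper: since all regular indecomposables $R_m(\lambda)$ and $R_n(\lambda)$ are $\tau$-periodic (indeed $\tau$-fixed, as shown in Figure~\ref{fig:AR(K)}) and in particular non-projective and non-injective, Corollary~\ref{cor:tau_isos} does not immediately reduce the rank, so instead I would compute directly. Write $d=\min(m,n)$ and assume WLOG $m\le n$ (the other case is dual, or follows by the same count). Reducing as above to $f_x=f_y=:X$ with $X\,J_m(\lambda)=J_n(\lambda)\,X$, I would parametrize the commutant: an $n\times m$ matrix commuting (in the rectangular sense) with a pair of Jordan blocks of the same eigenvalue is, after subtracting $\lambda\mathbbm{1}$ on each side, a rectangular matrix commuting with the pair of nilpotent shift matrices, and such matrices are exactly the "rectangular Toeplitz/upper-triangular" matrices, whose space has dimension $\min(m,n)$. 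This is a short linear-algebra computation; I would present it by exhibiting the basis $\{N_y^{\,j}\iota\}_{j=0}^{d-1}$ or similar, where $\iota$ is the natural inclusion/projection $k^m\hookrightarrow k^n$ (or $k^m\twoheadleftarrow k^n$), and checking linear independence.

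The main obstacle I anticipate is the bookkeeping in the $\lambda=\infty$ and mixed cases, since there the roles of $\alpha$ and $\beta$ are interchanged, so one must be careful that the two intertwining relations still collapse to a single Sylvester equation with the correct nilpotent/identity pattern; the cleanest fix is to observe that conjugating by the automorphism of $\mathsf{K}$ swapping $\alpha\leftrightarrow\beta$ sends $R_n(\lambda)$ to $R_n(\lambda^{-1})$ (with the convention $0^{-1}=\infty$), reducing every case to the finite-eigenvalue computation. A secondary subtlety is making sure the dimension count $\min(m,n)$ is tight rather than just an upper bound — for this it suffices to exhibit $\min(m,n)$ explicitly linearly independent intertwiners, which the Toeplitz basis above provides. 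Alternatively, if one prefers a representation-theoretic argument, one can note that $R_m(\lambda)$ has a filtration with $m$ copies of the simple regular $R_1(\lambda)$ and use $\dim\Hom(R_1(\lambda),R_1(\lambda))=1$ together with the absence of higher $\mathrm{Ext}$ contributions in the relevant range, but the direct matrix computation is the most self-contained route.
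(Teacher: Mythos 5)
The paper does not actually prove this lemma: it is quoted from \cite{Barot}, and the paper simply records the resulting bases in Lemma~\ref{lem:Hom_mats}. So there is no ``paper's own proof'' to compare against, and you have effectively supplied one. Your approach — reduce the intertwining conditions to a single Sylvester-type equation $XJ_m(\lambda)=J_n(\mu)X$, conclude $\lambda=\mu$ from the disjoint-spectra principle, then count the rectangular commutant of a pair of equal-eigenvalue Jordan blocks — is the standard route and does give $\dim=\min(m,n)$, matching the Toeplitz-band matrices the paper records from Barot. That part is sound.

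Two points worth tightening. First, the automorphism $\alpha\leftrightarrow\beta$ sending $R_n(\lambda)\mapsto R_n(\lambda^{-1})$ does \emph{not} reduce every case to the finite-eigenvalue one: it maps the pair $(\infty,0)$ to $(0,\infty)$, which is still mixed, so you cannot dispense with the direct argument there. Second, your stated reason for vanishing in the mixed case — ``one structure map is invertible on one side but nilpotent on the other'' — is a heuristic that becomes literally wrong when $\mu=0$ (both $\beta$-maps are then nilpotent). The clean fix, which you essentially gesture at, is an iteration: from $f_x=f_yJ_m(0)$ and $f_y=J_n(\mu)f_x$ you get $f_y=J_n(\mu)^k f_y J_m(0)^k$ for all $k$, and $J_m(0)^m=0$ kills $f_y$ (hence $f_x$) for every $\mu\ne\infty$, including $\mu=0$. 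With that one line made explicit, your argument handles all of $k\cup\{\infty\}$ uniformly and the proof is complete.
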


The proof of these facts provided in \cite{Barot} give bases for $\Hom$-spaces between indecomposable representations of $\mathsf{K}$ which we summarize in the following lemma:

\begin{lemma}\label{lem:Hom_mats}
The $\Hom$-spaces $\Hom(I_m,I_n)$ and $\Hom(R_m(\lambda),R_n(\lambda))$ are as described below.
    \begin{enumerate}[(a)]
        \item $\Hom(I_m,I_n)$, $m \geq n$, is given by the following pairs of matrices $(A,B)$:
                \begin{nalign} 
                    A &=
                        \left[
                        \begin{array}{ccccccc}
                            a_1 & a_2 & \cdots & a_{m-n+1}    & & & \\
                                & a_1 & a_2    & \cdots & a_{m-n+1} & &  \\
                     &     & \ddots & \ddots &      & \ddots & \\
                                &     &        & a_1    & a_2  & \cdots & a_{m-n+1} \\
                        \end{array}
                        \right] \in k^{(n+1) \times (m+1)} \\
        \vspace{10pt}\\               
                    B &=
                        \left[
                        \begin{array}{ccccccc}
                            a_1 & a_2 & \cdots & a_{m-n+1} & & & \\
                                & a_1 & a_2    & \cdots & a_{m-n+1} & &  \\
                     &     & \ddots & \ddots &      & \ddots & \\
                                &     &        & a_1    & a_2  & \cdots & a_{m-n+1} \\
                        \end{array}
                        \right] \in k^{n \times m}
                \end{nalign}  

        \item $\Hom(R_m(\lambda), R_n(\lambda))$ is given by the following pairs of matrices $(A,B)$:
            \begin{enumerate}[i.]
                \item If $m \leq n$,
                    \[A=B=
                        \left[
                        \begin{array}{ccccc}
                            a_1 & a_2      & \cdots & a_{m}\\
                                & \ddots   & \ddots & \vdots \\
                                &          & \ddots & a_2    \\
                                &        &  & a_1 \\
                                &          &        & \\
                                &          &        & 
                        \end{array}
                        \right] \in k^{n \times m}
                    \]  
                \item If $m > n$,
                    \[A = B =
                        \left[
                        \begin{array}{ccccccc}
                          & & a_1 & a_2      & \cdots & a_{n}\\
                         &  &     & \ddots   & \ddots & \vdots \\
                       &  &  &  & \ddots & a_2  \\
                         & &  &  &        & a_1    
                        \end{array}
                        \right] \in k^{n\times m}
                    \] 
            \end{enumerate}
    \end{enumerate}
\end{lemma}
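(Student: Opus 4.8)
The plan is to compute both $\Hom$-spaces directly from the defining commutativity relations for morphisms of Kronecker representations, recovering along the way the dimension counts recorded in the preceding lemmas; the same bases appear in \cite{Barot}, so one could alternatively just cite those computations, but the direct verification is short. Recall that a morphism $f\colon M\to N$ in $\rep_k\mathsf{K}$ is a pair $(f_x,f_y)$ of linear maps satisfying $f_y\,\alpha_M=\alpha_N\,f_x$ and $f_y\,\beta_M=\beta_N\,f_x$; I will set $A=f_x$, $B=f_y$ and read each relation off as a system of scalar equations on the entries.

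For part (a) take $M=I_m$, $N=I_n$ with $m\geq n$, so $A\in k^{(n+1)\times(m+1)}$ and $B\in k^{n\times m}$. Since $\alpha_{I_n}=[\,\mathbbm{1}_n\ 0\,]$ picks out the first $n$ rows and $\beta_{I_n}=[\,0\ \mathbbm{1}_n\,]$ the last $n$ rows, the $\alpha$-relation says that the first $n$ rows of $A$ form the block $[\,B\ 0\,]$ and the $\beta$-relation says that the last $n$ rows of $A$ form $[\,0\ B\,]$. Comparing entries gives $a_{i,j}=a_{i+1,j+1}$ whenever both $(i,j)$ and $(i+1,j+1)$ are valid positions (so $A$ is constant along each diagonal), together with $a_{i,1}=0$ for $i\geq 2$ and $a_{i,m+1}=0$ for $i\leq n$. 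Writing $a_{i,j}=c_{j-i}$, these two boundary conditions annihilate exactly the diagonals with $j-i<0$ and those with $j-i>m-n$, leaving $m-n+1$ free scalars $a_1:=c_0,\dots,a_{m-n+1}:=c_{m-n}$; and $B$ is the top-left $n\times m$ truncation of $A$. This is exactly the displayed pair of banded Toeplitz matrices, and it re-derives $\dim\Hom(I_m,I_n)=m-n+1$.

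For part (b) take $M=R_m(\lambda)$, $N=R_n(\lambda)$, so $A,B\in k^{n\times m}$. Whichever of the two structure maps of $R_m(\lambda)$ is the identity matrix --- $\alpha$ if $\lambda\neq\infty$, $\beta$ if $\lambda=\infty$ --- forces $B=A$ through one of the relations; the other relation then reads $A\,J_m(\lambda)=J_n(\lambda)\,A$ (with eigenvalue $0$ in the case $\lambda=\infty$), and since $J_r(\lambda)=\lambda\,\mathbbm{1}_r+J_r(0)$, cancelling the $\lambda A$ terms reduces this uniformly to the shift-intertwining equation $A\,J_m(0)=J_n(0)\,A$. Comparing entries again yields diagonal-constancy $a_{i,j}=a_{i+1,j+1}$, now subject to the boundary conditions $a_{i,1}=0$ for $i\geq 2$ and $a_{n,j}=0$ for $j\leq m-1$. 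Setting $a_{i,j}=c_{j-i}$ and splitting on the sign of $m-n$: when $m\leq n$ the surviving diagonals are $c_0,\dots,c_{m-1}$, giving $m$ parameters, while when $m>n$ they are $c_{m-n},\dots,c_{m-1}$, giving $n$ parameters. In both cases there are $\min(m,n)$ free scalars, laid out precisely as in the two displayed matrices, consistent with the dimension $\min(m,n)$ from Lemma~\ref{lem:lamba=mu}.

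The computations themselves are routine; I expect the only point requiring care to be the bookkeeping --- keeping the roles of the two commutativity relations straight (one pins down $A=B$ in the regular case, the other supplies the intertwining condition), correctly identifying which structure map is the identity when $\lambda=\infty$, and tracking how the two edge-vanishing conditions interact with diagonal-constancy so that the parameter count lands on $m-n+1$ in part (a) and on $\min(m,n)$, with the correct placement of the band, in each of the two subcases of part (b).
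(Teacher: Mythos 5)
The paper does not prove this lemma: it simply cites \cite{Barot} for the bases of the relevant $\Hom$-spaces and records the resulting matrix forms. Your proof fills in that computation from scratch, and it is correct. The key steps all check out: for (a), the relations $f_y\,\alpha_{I_m}=\alpha_{I_n}f_x$ and $f_y\,\beta_{I_m}=\beta_{I_n}f_x$ really do say that the first $n$ rows of $A$ equal $[B\ 0]$ and the last $n$ rows equal $[0\ B]$, and combining these on the $(n-1)$ overlapping rows gives the diagonal-constancy $a_{i,j}=a_{i+1,j+1}$ together with the two edge-vanishing conditions, leaving exactly the $m-n+1$ diagonals $c_0,\dots,c_{m-n}$; for (b), the identity structure map (whichever of $\alpha,\beta$ it is) forces $A=B$, and subtracting $\lambda A$ reduces the remaining relation to $A\,J_m(0)=J_n(0)A$ in both the $\lambda\in k$ and $\lambda=\infty$ cases, after which the diagonal-constancy plus the two boundary vanishings give $\min(m,n)$ free diagonals placed as displayed. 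The parameter counts $m-n+1$ and $\min(m,n)$ agree with the preceding lemmas.

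One small caveat worth flagging explicitly: the parameter \emph{count} is convention-independent, but the \emph{placement} of the surviving band (upper-left versus lower-right, etc.) depends on whether $J_r(0)$ is taken with ones on the super- or subdiagonal. Your derived boundary conditions $a_{i,1}=0$ for $i\ge 2$ and $a_{n,j}=0$ for $j\le m-1$ correspond to the superdiagonal convention, and that is the one that reproduces the matrices as written in the statement; with the other convention the band flips. It would strengthen the write-up to pin this down with one sentence, since the paper does not spell out its Jordan-block convention. Compared with the paper's approach, your direct computation is more self-contained and also exhibits the same entry-by-entry, diagonal-bookkeeping style used in the appendix proofs, which makes it a natural companion; the cost is having to carry out the routine bookkeeping rather than outsourcing it to \cite{Barot}.
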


Finally, we reference \cite{BSaffinealg2012} for a description of the $\AR$ quiver of the derived category $D^b(\rep{\mathsf{K}})$ for the Kronecker quiver $\mathsf{K}$. In particular, we note that in $D^b(\rep{\mathsf{K}})$, we have $\tau^{-1} I_0[n-1] = P_1[n]$. A diagram is given in Figure~\ref{fig:AR(Db(K))} below.

\begin{figure}[H]
\caption{Auslander-Reiten quiver of $D^b(\rep{\mathsf{K}})$}
\[\begin{tikzcd}[ampersand replacement=\&, column sep=tiny]
	\&\&\&\&\&\&\&\&\&\& {} \\
	\&\&\&\&\&\&\&\&\&\& {R_3(\lambda)} \arrow["{\tau }"' swap,loop right, dashed, color={rgb,255:red,214;green,153;blue,92},looseness=3] \\
	{\cdots} \& {} \&\& {I_0[-1]} \&\& {P_1} \& {} \& {P_3} \& {} \&\& {R_2(\lambda)} \arrow["{\tau }"' swap,loop right, dashed, color={rgb,255:red,214;green,153;blue,92},looseness=3] \\
	\& {} \& {I_1[-1]} \&\& {P_0} \&\& {P_2} \&\& {\cdots} \&\& {\underbrace{\phantom{++;} R_1(\lambda) \phantom{++;}}_{\lambda \in k \cup \{\infty\}}} \arrow["{\tau }"' swap,loop right, dashed, color={rgb,255:red,214;green,153;blue,92},looseness=3,start anchor={[xshift=-4ex,yshift=2.5mm]east}, end anchor={[xshift=-4ex, yshift=-1mm]east}] \\
	\arrow[shift left=1, from=4-5, to=3-6]
	\arrow[shift left=1, from=3-6, to=4-7]
	\arrow[shift left=1, from=4-7, to=3-8]
	\arrow[shift left=1, from=4-3, to=3-4]
	\arrow[shift right=1, from=4-5, to=3-6]
	\arrow[shift right=1, from=3-6, to=4-7]
	\arrow[shift right=1, from=4-7, to=3-8]
	\arrow[shift right=1, from=4-3, to=3-4]
	\arrow[shift left=1, from=3-11, to=2-11]
	\arrow[shift left=1, from=3-11, to=4-11]
	\arrow["\tau"', color={rgb,255:red,214;green,153;blue,92}, dashed, from=3-8, to=3-6]
	\arrow["\tau"', color={rgb,255:red,214;green,153;blue,92}, dashed, from=4-7, to=4-5]
	\arrow["\vdots"{description}, draw=none, from=2-11, to=1-11]
	\arrow[shift left=1, color={rgb,255:red,92;green,92;blue,214}, squiggly, from=3-4, to=4-5]
	\arrow[shift right=1, color={rgb,255:red,92;green,92;blue,214}, squiggly, from=3-4, to=4-5]
	\arrow["{\tau }"', color={rgb,255:red,214;green,153;blue,92}, dashed, from=4-5, to=4-3]
	\arrow["\tau"', color={rgb,255:red,214;green,153;blue,92}, dashed, from=3-6, to=3-4]
	\arrow[shift left=1, from=4-11, to=3-11]
	\arrow[shift left=1, from=2-11, to=3-11]
	\arrow[shorten >=1ex, shift left=1, from=3-8, to=4-9]
	\arrow[shorten >=1ex, shift right=1, from=3-8, to=4-9]
	\arrow[shift right=1, from=3-2, to=4-3]
	\arrow[shift left=1, from=3-2, to=4-3]
\end{tikzcd}\]
\label{fig:AR(Db(K))}
\end{figure}

The next lemma will present alternative criteria to those of the recognition theorem for direct sums of vector spaces. Recall that the recognition theorem tells us that if we have subspaces $S_1,\ldots,S_n$ of a vector space $V$, then we may conclude that $V= \oplus_{i=1}^n S_i$ if $\sum_{i}^n S_i = V$ and $S_i \cap \sum_{j\neq i} S_j = \{0\}$ for each $i$.
\begin{lemma}\label{lem:recognition}
    Given subspaces $S_1,\ldots,S_l$ of a vector space $V$, let $\mathcal{B}_i$ be a basis of $S_i$ for $i=1,\ldots,n$. In addition, suppose that
        \begin{enumerate}[(a)]
            \item $\mathcal{B} := \bigcup\limits_{i=1}^n \mathcal{B}_i$ is a linearly independent set of elements of $V$ and
            \item $\sum\limits_{i=1}^n \dim S_i = \dim V$.
        \end{enumerate}
    Then, $V= \oplus_{i=1}^n S_i$.
\end{lemma}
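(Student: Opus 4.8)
The statement is a mild strengthening of the standard recognition theorem for internal direct sums: instead of verifying the pairwise triviality condition $S_i \cap \sum_{j\neq i} S_j = \{0\}$, we replace it with a global dimension count combined with linear independence of the union of chosen bases. The plan is to reduce to the span and then use a dimension argument. First I would set $W := \sum_{i=1}^n S_i \subseteq V$ and observe that $W$ is spanned by $\mathcal{B} = \bigcup_{i=1}^n \mathcal{B}_i$, since each $S_i = \operatorname{span}(\mathcal{B}_i)$. By hypothesis (a), $\mathcal{B}$ is linearly independent, so $\mathcal{B}$ is a basis of $W$, whence $\dim W = |\mathcal{B}|$. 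Because the $\mathcal{B}_i$ are pairwise disjoint as subsets of the linearly independent set $\mathcal{B}$ (any element lying in two of them would be a repeated basis vector, contradicting independence, or — if one prefers — one simply counts with multiplicity), we get $|\mathcal{B}| = \sum_{i=1}^n |\mathcal{B}_i| = \sum_{i=1}^n \dim S_i$.

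Next I would combine this with hypothesis (b): $\dim W = \sum_{i=1}^n \dim S_i = \dim V$. Since $W$ is a subspace of $V$ of the same (finite) dimension, $W = V$, so the $S_i$ span $V$. It remains to upgrade the sum $V = \sum_i S_i$ to an internal \emph{direct} sum. For this I would invoke the general fact that a sum $\sum_{i=1}^n S_i$ is direct if and only if $\dim\!\left(\sum_{i=1}^n S_i\right) = \sum_{i=1}^n \dim S_i$; the forward direction is standard, and the reverse is exactly our situation. Alternatively, to keep the argument self-contained, one can argue directly: the natural surjection $\bigoplus_{i=1}^n S_i \twoheadrightarrow \sum_{i=1}^n S_i = V$ is a linear map between spaces of equal finite dimension, hence an isomorphism, which is precisely the assertion $V = \bigoplus_{i=1}^n S_i$.

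There is essentially no hard step here; the only point requiring a little care is the bookkeeping in hypothesis (a) — one must be careful about whether the $\mathcal{B}_i$ are taken to be genuinely disjoint sets or multisets indexed by $i$, since if two distinct $S_i$ shared a basis vector the union $\mathcal{B}$ could not be linearly independent anyway (it would contain that vector twice, or, reading it as a set, would have fewer elements than $\sum |\mathcal{B}_i|$ and the dimension count would fail). So the linear independence of $\mathcal{B}$ already forces the $\mathcal{B}_i$ to be disjoint with $|\mathcal{B}| = \sum_i |\mathcal{B}_i| = \sum_i \dim S_i$, and the rest is the dimension squeeze $\sum_i \dim S_i = \dim V$ forcing $\sum_i S_i = V$ together with the direct-sum criterion. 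I would note in passing (as the lemma's preamble already does) that condition (a) is what replaces the pairwise-intersection hypothesis of the classical recognition theorem, and is often easier to check in practice because one only needs to exhibit explicit bases and confirm no linear relations among their union.
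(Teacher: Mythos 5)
Your proposal is correct and reaches the same conclusion, but the mechanism for establishing that the sum is direct differs from the paper's. The paper reduces to the classical recognition theorem by verifying its two hypotheses explicitly: first it checks the intersection condition $S_i \cap \sum_{j\neq i} S_j = \{0\}$ by taking an arbitrary element $x$ in the intersection, expressing it in two ways as a linear combination over $\bigcup_i \mathcal{B}_i$, subtracting, and using the linear independence of $\mathcal{B}$ to conclude all coefficients (and hence $x$) vanish; then it checks spanning via the same dimension count you use. You instead establish the span $W = \sum_i S_i = V$ first and then appeal either to the criterion ``the sum is direct iff $\dim(\sum_i S_i) = \sum_i \dim S_i$'' or, more self-containedly, to the surjection $\bigoplus_i S_i \twoheadrightarrow V$ between spaces of equal finite dimension being an isomorphism. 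Both arguments are valid; yours is a bit more economical since it does not need to reverify the intersection condition element by element, while the paper's spells out the standard recognition-theorem hypotheses explicitly. You also identify, correctly, the small subtlety the paper glosses over: that hypothesis (a) must in effect be read as requiring the $\mathcal{B}_i$ to be pairwise disjoint (equivalently, that the union be taken with multiplicity), since otherwise the count $|\mathcal{B}| = \sum_i |\mathcal{B}_i|$ that both proofs rely on would fail; the paper silently switches to $\bigsqcup$ notation at that point, which is precisely this assumption.
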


\begin{proof}
    We will show that the conditions $(a)$ and $(b)$ of the lemma imply that $\sum_{i=1}^n S_i = V$ and $S_i \cap \sum_{j\neq i} S_j = \{0\}$ for each $i$. From there, it will follow, by the Recognition Theorem for direct products of groups, that $V= \oplus_{i=1}^n S_i$. Note that because we are only dealing with finite direct sums, the recognition theorem indeed applies. Also, because we are using vector spaces, the condition from the recognition theorem in the language of groups which requires commutativity is automatically satisfied. 

    Take $\mathcal{B}$ as defined in the lemma. Now suppose that there is an element $x \in S_i \cap \sum_{j\neq i} S_j$. Then, 
        \begin{nalign*}
                      x &= \sum_{k=1}^{\dim S_i} r_{k} \sigma_{ik} = \sum_{j\neq k} \sum_{k=1}^{\dim S_j} s_{jk} \sigma_{jk}\\
          \Rightarrow 0 &= \sum_{j\neq k} \sum_{k=1}^{\dim S_j} s_{jk} \sigma_{jk} - \sum_{k=1}^{\dim S_i} r_{k} \sigma_{ik}\\
                        &= \sum_{j=1}^n \sum_{k=1}^{\dim S_j} s_{jk} \sigma_{jk}
        \end{nalign*}
    where $s_{ik} = -r_k$ for $k=1,\ldots, \dim S_i$. Because the set $\{\sigma_{jk}\}_{j,k} = \mathcal{B}$ is linearly independent by part $(a)$ of the lemma, it must be that $s_{jk} = 0$ for all $j,k$. Then, $x=0$ and $S_i \cap \sum_{j\neq i} S_j = \{0\}$. This argument is independent of the fixed $i$, so we get the result for each $i=1,\ldots,n$. The $S_i$ are mutually disjoint and therefore so are their bases $\mathcal{B}_i$.

    Next, we want to show that $S := \sum_{i=1}^n S_i = V$. We know that $S \subseteq V$ because each $S_i \subseteq V$. Also, we may conclude that $\mathcal{B}$ is basis of $S$ with dimension
        \begin{equation}
            |\mathcal{B}| = \left|\bigsqcup\limits_{i=1}^n \mathcal{B}_i \right| = \sum\limits_{i=1}^n |\mathcal{B}_i| = \sum\limits_{i=1}^n \dim S_i
        \end{equation}
    By part $(b)$ of the lemma, $\sum_{i=1}^n \dim S_i = \dim V$, so it follows that $S$ is a submodule of $V$ with the same dimension. Hence, it must be that $S= \sum_{i=1}^n S_i =V$. 
\end{proof}

\begin{remark}
    Note that in the case of quiver representations, where there are vector spaces at each vertex and direct sums are defined vertex-wise, we can use the argument by vertex. In other words, we let $S_i = (S_i(x))_{x\in Q_0}$ so that when writing $S_i \subseteq V$, we mean $S_i(x) \subseteq V(x)$ for each vertex $x$. Similarly, we will use $\mathcal{B}_i = (\mathcal{B}_i(x))_{x \in Q_0}$ to refer to a collection of bases at each vertex. It will be understood that bases at different vertices do not interact and are linearly independent from one another. Finally, part $(b)$ of the lemma then becomes $\sum\limits_{i=1}^n \ddim S_i = \ddim V$.
\end{remark}

\subsection{From Postprojective}

\begin{customthm}{\ref{thm:ev(Pm,N)}}
    Given a postprojective indecomposable representation $P_m$, $m\geq 0$ and an indecomposable representation $N$ of the Kronecker quiver $\mathsf{K}$, we have that 
        \begin{enumerate}[(a)] 
            \item $\ker \ev[P_m][P_n] \cong (P_{m-1})^{n-m}, \quad n > m \geq 1 $,
                
            \item $\ker \ev[P_m][R_n(\lambda)] \cong (P_{m-1})^n, \quad m \geq 1$,
                
            \item $\ker \ev[P_m][I_n] \cong (P_{m-1})^{m+n+1}, \quad m \geq 1.$
        \end{enumerate}
    All other kernels are equal to 0.
\end{customthm}

\begin{proof} Consider the indecomposable representations $P_m$ and $N$ of the Kronecker quiver $\mathsf{K}$. Since $P_m$ is a postprojective element, there exists some $k \geq 1$ such that $\tau^k P_m$ is isomorphic to either $P_1 \cong P(x)$ if $m=2k+1$ or $P_0 \cong P(y)$ if $m=2k$. Also note that we have 
        \begin{equation}
            \Hom(P_m,N) \cong \Hom(\tau^k P_m, \tau^k N)
        \end{equation}
by Lemma~\ref{cor:tau_isos}$(a)$, which we may use to extend from $P_0$ or $P_1$ to the general $P_m$. Therefore, without loss of generality, we will consider the two cases $\ev[P_0][N]$ and $\ev[P_1][N]$. Note that Lemma~\ref{cor:tau_isos}$(a)$ requires that $N$ has no projective summands. Since $N$ is indecomposable, this means that we cannot apply it to the case where $N$ is $P_0$ or $P_1$. But, notice that if $N$ is $P_0$ or $P_1$, then $\Hom(P_m,N)$ is only nonzero when $m$ is 0 or 1. Since these are the base cases, we do not need the lemma to compute the kernel of the evaluation morphism and we can apply the lemma without issue.\\

\subsubsection{Case I:}
    Consider the evaluation morphism \(\ev[P_0][N]: P_0 \otimes \Hom(P_0, N) \to N\). By Lemma~\ref{lem:kerevP0}, we have that $\ker \ev[P_0][N] \cong I_0[-1] \otimes N(x)$ in the derived category. 
    
    By Corollary~\ref{cor:ker_ev_t}, we know that $\ker \ev[M][N] \cong \tau^{-k} \ker \ev[\tau^k M][\tau^k N]$ for $k \in \Z$, so we can use that $P_0 \cong \tau^k P_m$ to extend the kernel of the evaluation morphism from $M=P_0$ to $M=P_{m=2k}$:
        \begin{nalign}
            \ker \ev[P_{m}][N] &\cong \tau^{-k} \ker \ev[\tau^k P_m][\tau^k N]\\
                                  &\cong \tau^{-k} \ker \ev[P_0][\tau^k N]\\
                                  &\cong \tau^{-k} \left(I_0[-1] \otimes (\tau^k N)(x) \right)\\
                                  &\cong \tau^{-k}I_0[-1] \otimes (\tau^k N)(x)
        \end{nalign}    

    Note that $\tau^{-k}$ does not act on $(\tau^k N)(x)$. Recall from the shape of $\AR(D^b(\rep\mathsf{K}))$ (Figure~\ref{fig:AR(Db(K))}) that $\tau^{-1}I_0[-1] \cong P_1$, so that 
        \begin{nalign}
            \tau^{-k} I_0[-1] &\cong \tau^{-(k-1)} \tau^{-1} I_0[-1] \\
                              &\cong \tau^{-(k-1)} P_1 \\
                              &\cong P_{1+2(k-1)} \\
                              &\cong P_{2k-1}\\
                              &\cong P_{m-1}
        \end{nalign}
    Thus, we conclude that $\ker \ev[P_{m=2k}][N] \cong P_{m-1} \otimes (\tau^k N)(x)$. We now calculate $(\tau^k N)(x)$ for $N \cong P_n, R_n(\lambda), I_n$:
        \begin{enumerate}[(a)]
            \item $N = P_n \in \mathcal{P}, n \geq m$:
                \[(\tau^k P_n)(x) \cong P_{n-2k}(x) \cong k^{n-2k} = k^{n-m}\]

            \item $N = R_n(\lambda) \in \mathcal{R}$:
                \[(\tau^k R_n(\lambda))(x) \cong R_n(\lambda)(x) \cong k^n\]

            \item $N = I_n \in \mathcal{I}$:
                \[(\tau^k I_n)(x) \cong I_{n+2k}(x) \cong k^{n+2k+1} = k^{n+m+1}\]
        \end{enumerate}
    Finally, because $P_{m-1} \otimes k^l \cong (P_{m-1})^l$, we have shown the result.\\

\subsubsection{Case II:} 
    Now consider \(\ev[P_1][N]: P_1 \otimes \Hom(P_1, N) \to N\). By Lemma~\ref{lem:kerevP1}, 
    \begin{equation}
        \ker \ev[P_1][N] \cong P_0^{\dim \ker \begin{bsmallmatrix} \alpha_N & \beta_N \end{bsmallmatrix}}
    \end{equation}
    
    We calculate the dimension of the kernel of the evaluation morphism for each choice of $N=P_N, I_n, R_n(\lambda)$. As noted above, it reduces to finding $\dim \ker \begin{bsmallmatrix} \alpha_N & \beta_N \end{bsmallmatrix}$, the dimension of the kernel of the block matrix $\begin{bsmallmatrix} \alpha_N & \beta_N \end{bsmallmatrix}$.

    For $N=P_n \in \mathcal{P}$, we have 
    \begin{equation}\begin{bmatrix}\alpha_N & \beta_N\end{bmatrix} =   \begin{bmatrix} 
                                                     & 0 \, \ldots \, 0 \\
                                          \mathbbm{1}_n       &                  \\
                                                     &       \mathbbm{1}_n       \\
                                    0 \, \ldots \, 0 &     
                                \end{bmatrix}\end{equation}
    which results in an $(n+1)\times(2n)$ matrix where the two copies of $\mathbbm{1}_n$ overlap for $n-2$ rows (all rows excluding the rows containing $0 \ldots 0$ at the bottom or the top of each of the copies of $\mathbbm{1}_n$). Because each non-zero entry is in a row with no other elements, we may use the columns from the left copy of $\mathbbm{1}_n$ to cancel all but the final column in the right copy. We obtain an equivalent matrix 
                \begin{equation}
                \begin{bmatrix} 
                          \mathbbm{1}_n       & 0_{1\times n} \\
                     0_{1\times n}   &     \mathbbm{1}_n
                \end{bmatrix}
                \longleftrightarrow
                \begin{bmatrix} 
                       \mathbbm{1}_n       & 0_{n\times (n-1)} & 0_{n \times 1}\\
                    0_{1\times n} & 0 \, \ldots \, 0  & 1
                \end{bmatrix}
                \end{equation}
    where $0_{k \times l}$ indicates submatrix of $k \times l$ zeroes. Notice that there is a remaining entry $1$ at the bottom right so that the rank of this matrix is $n+1$ and its kernel has size $2n-(n+1)=n-1$ corresponding to the $n-1$ columns of all zeros. Therefore, $\dim \ker \begin{bsmallmatrix} \alpha_{P_n} & \beta_{P_n} \end{bsmallmatrix} = n-1$ and $\ker \ev[P_1][P_n]\cong (P_0)^{n-1}$.

    For $N=R_n(\lambda)$, we have an $n\times 2n$ matrix
                \begin{equation}\begin{bmatrix}\alpha_N & \beta_N\end{bmatrix} = \begin{bmatrix} \mathbbm{1}_n & J_n(\lambda) \end{bmatrix} \longleftrightarrow \begin{bmatrix} \mathbbm{1}_n & 0_{n\times n}\end{bmatrix}\end{equation}
    In this case, we notice that we may use the columns of the identity matrix to completely cancel the Jordan block $J_n(\lambda)$. We can easily see that the kernel will have dimension $n$ and conclude $\ker \ev[P_1][R_n(\lambda)]\cong (P_0)^n$.

    For $N=I_n$, we obtain a $n \times (2n+2)$ matrix 
                \begin{equation}\begin{bmatrix} \mathbbm{1}_n & 0 & 0 & \mathbbm{1}_n\end{bmatrix} \longleftrightarrow \begin{bmatrix} \mathbbm{1}_n & 0_{n\times 1} & 0_{n\times 1} & 0_{n\times n}\end{bmatrix}\end{equation} 
    which has dimension of the kernel equal to $n+2$. So, $\ker \ev[P_1][I_n]\cong (P_0)^{n+2}$.

    Now, by using Corollary~\ref{cor:ker_ev_t}, we can get the kernel of $\ev[P_m][N]$, with $m=2k+1$ odd. If $\tau^k P_m \cong P_1$, then:
        \begin{nalign}
            \tau^{-k} \ker \ev[P_{m=2k+1}][N] &\cong \tau^{-k} \ker \ev[\tau^k P_m][\tau^k N]\\
                                              &\cong \tau^{-k} \ker \ev[P_1][\tau^k N]
        \end{nalign}
    Then, 
        \begin{enumerate}[(1)]
            \item For each $N = P_n \in \mathcal{P}:$
                \begin{nalign}
                    \ker \ev[P_1][P_n] &\cong \tau^{-k} \ker \ev[P_1][\tau^k P_n]\\
                                       &\cong \tau^{-k} \ker \ev[P_1][P_{n-2k}]\\
                                       &\cong \tau^{-k} (P_0)^{n-2k-1}\\
                                       &\cong (P_{2k})^{n-2k-1}\\
                                       &\cong (P_{m-1})^{n-m}
                \end{nalign}

            \item For each $N = R_n(\lambda) \in \mathcal{R}:$
                \begin{nalign}
                    \ker \ev[P_1][R_n(\lambda)] &\cong \tau^{-k} \ker \ev[P_1][\tau^k R_n(\lambda)]\\
                                       &\cong \tau^{-k} \ker \ev[P_1][R_n(\lambda)]\\
                                       &\cong \tau^{-k} (P_0)^n\\
                                       &\cong (P_{2k})^n\\
                                       &\cong (P_{m-1})^n
                \end{nalign}

            \item For each $N = I_n \in \mathcal{I}:$
                \begin{nalign}
                    \ker \ev[P_1][I_n] &\cong \tau^{-k} \ker \ev[P_1][\tau^k I_n]\\
                                       &\cong \tau^{-k} \ker \ev[P_1][I_{n+2k}]\\
                                       &\cong \tau^{-k} (P_0)^{n+2k+2}\\
                                       &\cong (P_{2k})^{n+2k+2}\\
                                       &\cong (P_{m-1})^{n+m+1}
                \end{nalign}
        \end{enumerate}
        These results agree with those from Case I and combine to yield the desired formulas for $\ev[P_m][N]$.
\end{proof}

\begin{lemma}\label{lem:kerevP0}
    Let $N$ be any indecomposable representation of the Kronecker quiver $\mathsf{K}$. Then,
        \[\ker \ev[P_0][N] \cong I_0[-1] \otimes N(x)\]
\end{lemma}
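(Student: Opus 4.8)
The plan is to compute the evaluation morphism $\ev[P_0][N]$ by hand, recognize it as a monomorphism of representations, read off its cokernel, and then translate the result into the derived category using the short-exact-sequence/triangle dictionary of Section~\ref{sec:derivedcateg}.

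First I would pin down $P_0$. By the description of the postprojective component it has dimension vector $(0,1)$ with both structure maps zero, so $P_0\cong P(y)$, the projective (indeed simple) representation at the sink $y$. Applying Proposition~\ref{prop:hom_isos}(a) with the vertex $y$ gives a natural isomorphism $\Hom(P_0,N)\cong N(y)$ under which a morphism $f\colon P_0\to N$ corresponds to $f_y(e_y)\in N(y)$. Hence the source of the evaluation morphism is $P_0\otimes\Hom(P_0,N)\cong P_0\otimes N(y)$, which is $0$ at the vertex $x$ and $\cong N(y)$ at the vertex $y$.

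Next I would write $\ev[P_0][N]\colon P_0\otimes N(y)\to N$ out componentwise. At $x$ it is the zero map $0\to N(x)$; at $y$ it is $e_y\otimes f\mapsto f_y(e_y)$, which under the identification above is exactly the canonical isomorphism $k\otimes N(y)\xrightarrow{\sim}N(y)$, and in particular does not depend on the structure maps of $N$. Therefore $\ev[P_0][N]$ is a monomorphism of representations whose cokernel is the representation that is $N(x)$ at $x$, $0$ at $y$, with both structure maps forced to vanish since the target space at $y$ is $0$. As $I_0\cong I(x)$ has dimension vector $(1,0)$ with zero maps, this cokernel is isomorphic to $(I_0)^{\dim_k N(x)}$, that is, to $I_0\otimes N(x)$.

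Finally I would pass to $D^b(\rep_k\mathsf{K})$. The short exact sequence $0\to P_0\otimes N(y)\xrightarrow{\ev[P_0][N]}N\to I_0\otimes N(x)\to 0$ corresponds to a triangle $P_0\otimes N(y)\xrightarrow{\ev[P_0][N]}N\to I_0\otimes N(x)\to(P_0\otimes N(y))[1]$, so by the convention fixed in Section~\ref{sec:derivedcateg} the co-cone of $\ev[P_0][N]$, which is what $\ker$ denotes in this derived setting, is $(I_0\otimes N(x))[-1]=I_0[-1]\otimes N(x)$, as claimed. There is no computationally difficult step; the only points that need care are keeping the derived shift and co-cone conventions straight and checking that the cokernel genuinely decomposes as a direct sum of copies of $I_0$ (i.e.\ that its structure maps vanish), which is automatic here. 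I would also remark that indecomposability of $N$ is not actually used anywhere, since the identity holds for every $N\in\rep_k\mathsf{K}$; the hypothesis is present only to match the setting of the Pattern Theorems.
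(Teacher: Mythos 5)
Your argument follows the paper's proof essentially step for step: identify $P_0\cong P(y)$ and $\Hom(P_0,N)\cong N(y)$ via Proposition~\ref{prop:hom_isos}, write $\ev[P_0][N]$ componentwise as $(0,\mathbbm{1}_{N(y)})$, observe it is a monomorphism with cokernel $I_0\otimes N(x)$, and then pass to the corresponding triangle in $D^b(\rep_k\mathsf{K})$ to read off the co-cone $I_0[-1]\otimes N(x)$. The only small difference is that the paper additionally checks that the resulting short exact sequence splits before invoking the triangle, whereas you appeal directly to the general SES-to-triangle dictionary from Section~\ref{sec:derivedcateg}; both are valid, and your observation that indecomposability of $N$ is unused is correct.
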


\begin{proof}
    Consider \[\ev[P_0][N]:P_0 \otimes \Hom(P_0,N) \to N.\] Because $P_0 \cong P(y)$, by Lemma~\ref{cor:tau_isos} we obtain, $\Hom(P_0,N) \cong \Hom(P(y),N) \cong N(y)$. We may also represent the representations $P_0$ and $N$ in the evaluation morphism as $P_0: 0\toto k$ and $N: N(x) \toto N(y)$. 
    Thus, we may rewrite $\ev[P_0][N]$ to be
        \[\ev[P_0][N]:(0 \toto k) \otimes N(y) \to N(x) \toto N(y)\]
    We can further simplify by applying the tensor in the first argument:
        \[\ev[P_0][N]:0 \toto N(y) \to N(x) \toto N(y)\]
    It is then clear that $\ev[P_0][N]$ is equivalent to a pair of maps $(A,B)$ such that the following diagram commutes:

    \begin{equation}\begin{tikzcd}
        0 && {N(x)} \\
        \\
        {N(y)} && {N(y)}
        \arrow["A", from=1-1, to=1-3]
        \arrow["B"', from=3-1, to=3-3]
        \arrow["0", shift left=2, from=1-1, to=3-1]
        \arrow["0"', shift right=2, from=1-1, to=3-1]
        \arrow["{\beta_N}", shift left=2, from=1-3, to=3-3]
        \arrow["{\alpha_N}"', shift right=2, from=1-3, to=3-3]
    \end{tikzcd}\end{equation}

    We must have $A=0$ and $B=\mathbbm{1}_{N(y)}$. As both of these maps are injective and have no kernel, we focus our attention on $\coker \ev[P_0][N]$: 
        \begin{nalign}
            \coker \ev[P_0][N] &\cong \frac{N}{\im \ev[P_0][N]}\\
                               &\cong \frac{N(x)}{\im 0} \toto \frac{N(y)}{\im \mathbbm{1}_{N(y)}} \\
                               &\cong N(x) \toto 0\\
                               &\cong (k \toto 0) \otimes N(x)\\
                               &\cong I_0 \otimes N(x)
        \end{nalign}
    We can fit $\coker \ev[P_0][N] \cong I_0 \otimes N(x)$ into the following short exact sequence:
        \[0 \to P_0 \otimes N(y) \xto{\ev[P_0][N]} N \to I_0 \otimes N(x) \to 0.\]
    Note that this sequence is split: 
        \begin{nalign}
            (P_0 \otimes N(y)) \oplus (I_0 \otimes N(x)) &\cong (0 \toto k) \otimes N(y) \oplus (k \toto 0) \otimes N(x) \\
            &\cong 0 \toto N(y) \oplus N(x) \toto 0\\
            &\cong N(x) \toto N(y)\\
            &\cong N
        \end{nalign}
    Therefore, by Lemma~\ref{def:triangles}, in the derived category $D^b(\rep\mathsf{K})$, we have a triangle
            \[P_0 \otimes N(y) \xto{\ev[P_0][N]} N \to I_0 \otimes N(x) \to (P_0 \otimes N(y))[1].\]
    As with Definition~\ref{def:cone}, we will take the $(I_0 \otimes N(x))[-1] \cong I_0[-1] \otimes N(x)$ to be the co-cone of the $\ev[P_0][N]$ and so it serves as a ``kernel" of the $\ev[P_0][N]$. We will write $\ker \ev[P_0][N] \cong I_0[-1] \otimes N(x)$ in the derived category.
\end{proof}

\begin{lemma}\label{lem:kerevP1}
    Let $N$ be any indecomposable representation of the Kronecker quiver $\mathsf{K}$. Then,
        \[\ker \ev[P_1][N] \cong P_0^{\dim \ker \begin{bsmallmatrix} \alpha_N & \beta_N \end{bsmallmatrix}}\]
\end{lemma}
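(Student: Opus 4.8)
The plan is to compute $\ev[P_1][N]$ explicitly, vertex by vertex, and to read off its kernel directly. First I would use the identification $P_1 \cong P(x)$, where $x$ is the source of both arrows of $\mathsf{K}$, so that Proposition~\ref{prop:hom_isos}$(a)$ gives $\Hom(P_1,N) \cong N(x)$. It is worth making this isomorphism explicit: a morphism $f\colon P(x)\to N$ is determined by $f_x(e_x)\in N(x)$, and conversely a vector $w\in N(x)$ corresponds to the morphism $f^{w}$ with $f^{w}_x(e_x)=w$ and, on the basis $\{\alpha,\beta\}$ of $P(x)_y$, with $f^{w}_y(\alpha)=\alpha_N(w)$ and $f^{w}_y(\beta)=\beta_N(w)$, these last two being forced by the requirement that $f^{w}$ commute with the structure maps of $\mathsf{K}$.

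Next I would evaluate at each vertex. At $x$ we have $(P_1\otimes\Hom(P_1,N))_x = P_1(x)\otimes\Hom(P_1,N) = k\otimes N(x)\cong N(x)$, and the rule $m\otimes f\mapsto f_x(m)$ becomes $e_x\otimes w\mapsto w$; hence $\ev[P_1][N]$ is the identity at $x$, and in particular injective there. At $y$ we have $(P_1\otimes\Hom(P_1,N))_y = P_1(y)\otimes\Hom(P_1,N) = k^2\otimes N(x)\cong N(x)\oplus N(x)$, with the two copies of $N(x)$ indexed by the basis vectors $\alpha$ and $\beta$ of $P(x)_y$; under this identification the evaluation map sends $(w_1,w_2)\mapsto f^{w_1}_y(\alpha)+f^{w_2}_y(\beta)=\alpha_N(w_1)+\beta_N(w_2)$. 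In other words, at the vertex $y$ the evaluation morphism is exactly the block matrix $\begin{bsmallmatrix}\alpha_N & \beta_N\end{bsmallmatrix}\colon N(x)^2\to N(y)$.

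Finally I would assemble the kernel. Because $\ev[P_1][N]$ is injective at $x$, the subrepresentation $\ker\ev[P_1][N]$ is the zero space at $x$ and equals $\ker\begin{bsmallmatrix}\alpha_N & \beta_N\end{bsmallmatrix}$ at $y$; since this subrepresentation vanishes at $x$, both of its structure maps are necessarily the zero maps $0\to\ker\begin{bsmallmatrix}\alpha_N & \beta_N\end{bsmallmatrix}$. Thus $\ker\ev[P_1][N]$ is isomorphic to the representation $\bigl(0\toto\ker\begin{bsmallmatrix}\alpha_N & \beta_N\end{bsmallmatrix}\bigr)$, and since $P_0=(0\toto k)$ this is $(P_0)^{\dim\ker\begin{bsmallmatrix}\alpha_N & \beta_N\end{bsmallmatrix}}$, which is the claim. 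I do not expect a genuine obstacle here: the only step needing care is the bookkeeping of the tensor factors and of the basis $\{\alpha,\beta\}$ of $P(x)_y$, so that the map at $y$ is correctly identified with $\begin{bsmallmatrix}\alpha_N & \beta_N\end{bsmallmatrix}$ rather than a transposed or re-indexed variant; the rest is routine linear algebra. One could alternatively deduce the lemma from the standard projective resolution of $P_1$ via Proposition~\ref{prop:stdres}, but the direct computation above is shorter.
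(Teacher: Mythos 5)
Your proof is correct and follows essentially the same route as the paper: identify $\Hom(P_1,N)\cong N(x)$ via Proposition~\ref{prop:hom_isos}, compute the evaluation morphism vertex-by-vertex, observe that it is the identity at $x$ and the block matrix $\begin{bsmallmatrix}\alpha_N & \beta_N\end{bsmallmatrix}$ at $y$, and read off the kernel as copies of $P_0$. If anything, your explicit unwinding of the isomorphism $w\mapsto f^w$ supplies a clean justification for the step where the paper simply declares ``we take $A=\mathrm{id}_{N(x)}$.''
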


\begin{proof}
    we begin by rewriting the $\Hom(P_1,N) \cong \Hom(P(x), N) \cong N(x)$ via Proposition~\ref{prop:hom_isos} to get that $\Hom(P_1,N) \cong \Hom(P(x),N) \cong N(x)$ and inserting the descriptions of the representations $P_1, N$:
        \[\ev[P_1][N]: (k \toto k^2) \otimes N(x) \to N(x) \toto N(y)\]
    which, after simplifying the tensor in the first argument, becomes
        \[\ev[P_1][N]: N(x) \toto N(x) \oplus N(x) \to N(x) \toto N(y).\]
    In this situation, the evaluation morphism is a pair of maps $(A,B)$ which make the following diagram commute:
\begin{equation}\begin{tikzcd}[ampersand replacement=\&]
    {N(x)} \& {} \& {N(x)} \\
    \\
    {N(x)\oplus N(x)} \& \& {N(y)}
    \arrow["A", from=1-1, to=1-3]
    \arrow["B"', from=3-1, to=3-3]
    \arrow["{\begin{bsmallmatrix} 0 \\ 1 \end{bsmallmatrix}}", shift left=2, from=1-1, to=3-1]
    \arrow["{\begin{bsmallmatrix} 1 \\ 0 \end{bsmallmatrix}}"', shift right=2, from=1-1, to=3-1]
    \arrow["{\beta_N}", shift left=2, from=1-3, to=3-3]
    \arrow["{\alpha_N}"', shift right=2, from=1-3, to=3-3]
\end{tikzcd}\end{equation}

    We take $A=id_{N(x)}$ and write the equations involving $B$: 
        \begin{nalign*}
            \alpha_N \, id_{N(x)} &= B \, \begin{bsmallmatrix} 1 \\ 0 \end{bsmallmatrix} \\
            \beta_N \, id_{N(x)} &= B \, \begin{bsmallmatrix} 0 \\ 1 \end{bsmallmatrix}
        \end{nalign*}
    Thus, it must be that the first column of $B$ is $\alpha_N$ and the second column is $\beta_N$. In other words, we have that $B = \begin{bsmallmatrix} \alpha_N & \beta_N \end{bsmallmatrix}$. So,
        \begin{nalign}
                 \ker \ev[P_1][N] &\cong \ker \mathbbm{1}_{N(x)} \toto \ker \begin{bsmallmatrix} \alpha_N & \beta_N \end{bsmallmatrix} \\
                 &\cong 0 \toto \ker \begin{bsmallmatrix} \alpha_N & \beta_N \end{bsmallmatrix} \\
                 &\cong (0 \toto k) \otimes \ker \begin{bsmallmatrix} \alpha_N & \beta_N \end{bsmallmatrix}\\
                 &\cong P_0 \otimes \ker \begin{bsmallmatrix} \alpha_N & \beta_N \end{bsmallmatrix} \\
                 &\cong P_0^{\dim \ker \begin{bsmallmatrix} \alpha_N & \beta_N \end{bsmallmatrix}} 
        \end{nalign}
\end{proof}

\subsection{From Regular to Regular}

\begin{customthm}{\ref{thm:ev(Rm,Rn)}}
    Given two regular indecomposable representations $R_m(\lambda), R_n(\lambda)$, $m, n\geq 1, \lambda \in k \cup \{\infty\}$, of the Kronecker quiver $\mathsf{K}$, we have that 
        \[\ker \ev[R_m(\lambda)][R_n(\lambda)] \cong \begin{cases}(R_m(\lambda))^{m-1}, & m \leq n\\ R_{m-n}(\lambda) \oplus (R_m(\lambda))^{n-1}, & m>n.\end{cases} \]
    All other kernels are equal to 0.
\end{customthm}

\begin{proof}
    First, we refer to Lemmas~\ref{lem:lamba=mu} and \ref{lem:Hom_mats} which state that a map between two regular indecomposables $R_m(\lambda)$ and $R_n(\mu)$, $\lambda, \mu \in k \cup \{\infty\}$, is only nonzero when $\lambda=\mu$. We also have that the $\Hom$-space between two such representations is given by the following pairs of matrices $(A,B)$:
        \begin{enumerate}[]
            \item If $m \leq n$,
                \[A=B=
                    \left[
                    \begin{array}{ccccc}
                        a_1 & a_2      & \cdots & a_{m}\\
                            & \ddots   & \ddots & \vdots \\
                            &          & \ddots & a_2    \\
                            &        &  & a_1 \\
                            &          &        & \\
                            &          &        & 
                    \end{array}
                    \right] \in k^{n \times m}
                \]  
            \item If $m > n$,
                \[A = B =
                    \left[
                    \begin{array}{ccccccc}
                      & & a_1 & a_2      & \cdots & a_{n}\\
                     &  &     & \ddots   & \ddots & \vdots \\
                   &  &  &  & \ddots & a_2  \\
                     & &  &  &        & a_1    
                    \end{array}
                    \right] \in k^{n\times m}
                \]  
        for $a_i \in k$. $\Hom_Q(R_m(\lambda), R_n(\lambda))$ has dimension $\min(m,n)$.
        \end{enumerate}
    Note that the structure of each $\Hom$-space does not vary with $\lambda$. Without loss of generality, we can therefore consider $\lambda=0$. Let $R_m := R_m(0)$. We split our argument into two cases $m\leq n$ and $m>n$. \\

\subsubsection{Case I:}
    Suppose $m\leq n$. 
        \[\Hom(R_m,R_n) = \left\{ \left[
                    \begin{array}{ccccc}
                        a_1 & a_2      & \cdots & a_{m}\\
                            & \ddots   & \ddots & \vdots \\
                            &          & \ddots & a_2    \\
                            &          &  & a_1 \\
                            &          &        & \\
                            &          &        & 
                    \end{array}
                    \right] \in k^{n \times m} \right\}
        \]
    Let $R_m = \langle f_1,\ldots,f_m \rangle$ and $R_n = \langle h_1,\ldots,h_n \rangle$. Then, we can take $\{\phi_i\}_{i=1}^m$ to be a basis of $\Hom(R_m,R_n)$, where
        \[\phi_i(f_j) = \begin{cases}
                            0, & j=1,\ldots,i-1\\
                            h_{j-i+1}, & j=i,\ldots,n
                        \end{cases}\]
    Note that $\ev[R_m][R_n]$ is not surjective because no $\phi_i$ maps any $f_j$ to $h_k$ for $k=m+1,\ldots,n$. But, we can see that $\phi_1$ embeds a copy of $R_m$ in $R_n$ and so $\ev[R_m][R_n]$ has the largest possible image, $R_m$. So, we have 
        \[\coker \ev[R_m][R_n] = \frac{R_n}{\im \ev[R_m][R_n]} \cong \frac{R_n}{R_m} \cong R_{m-n}.\]
    Let $K = \ker \ev[R_m][R_n]$. We have the follow exact sequence.
        \begin{nalign}
                    &0 \to K \to R_m \otimes \Hom(R_m,R_n) \to R_n \to \coker \ev[R_m][R_n] \to 0\\
        \Rightarrow \, \, &0 \to K \to R_m \otimes \Hom(R_m,R_n) \to R_n \to R_{m-n} \to 0
        \end{nalign}
    which implies
        \begin{nalign} 
            \dim \ker \ev[R_m][R_n] &= \dim R_m \otimes \Hom(R_m,R_n) + \dim R_n - \dim R_{m-n} \\
                                    &= m^2+(n-m)-n\\
                                    &= m^2-m
        \end{nalign}
    We will proceed by defining subrepresentations of $K$ and using Lemma~\ref{lem:recognition} to show that the kernel $K$ is a direct sum of these subrepresentations. For $i=2,\ldots,m$, let
        \begin{nalign}{3} 
            \mathcal{B}_i = &\{f_j \otimes \phi_i \, &&| \, j=1,\ldots,i-1 \} \, \cup\\
                            &\{f_j \otimes \phi_i - f_{j-i+1}\otimes\phi_1 \, &&| \,j=i,\ldots,m\}.
        \end{nalign}
    Define $S_i$ to be the subrepresentation generated by the set $\mathcal{B}_i$ with map $\beta_i$ given by $\beta_{R_m}$. First, we show that $S_i \subseteq K$ for each $i$.

    By definition, we have \[f_j \otimes \phi_i \mapsto \phi_i(f_j) = 0\] for $j=1,\ldots,i-1$. For $j=i,\ldots,m$, 
        \begin{nalign}
            f_j \otimes \phi_i - f_{j-i+1} \otimes \phi_1 &\mapsto \phi_i(f_j) - \phi_1(f_{j-i+1})\\
                                                          &= h_{j-i+1}-h_{j-i+1}\\
                                                          &= 0
        \end{nalign}
    Next, we will show that the set $\mathcal{B} = \cup_{i=2}^m \mathcal{B}_i$ of all elements generating some $S_i$ is linearly independent. Suppose towards contradiction that 
        \begin{nalign}
            0 &= \sum_{i=2}^m \sum_{j=1}^{i-1} r_{ij}(f_j \otimes \phi_i)+ \sum_{i=2}^m \sum_{j=i}^m s_{ij}(f_j \otimes \phi_i - f_{j-i+1} \otimes \phi_1)\\
              &= \sum_{i=2}^m \sum_{j=1}^{i-1} r_{ij}f_j \otimes \phi_i+ \sum_{i=2}^m \sum_{j=i}^m (s_{ij}f_j \otimes \phi_i - s_{ij}f_{j-i+1} \otimes \phi_1)
        \end{nalign}
    Because each of the morphisms $\phi_i$ is a basis element and thus is nonzero and not a linear combination of the others, the terms involving the same $\phi_i$ for a fixed $i$ must cancel out. So, if we consider $i>1$, we must have that 
        \begin{nalign}
            0 &= \sum_{j=1}^{i-1} r_{ij}f_j\otimes \phi_i +  \sum_{j=i}^m s_{ij}f_{j} \otimes \phi_i\\
            \Rightarrow 0 &= \sum_{j=1}^m s_{ij}f_{j}\otimes \phi_i
        \end{nalign}
    where $s_{ij} = r_{ij}$ for $j=1,\ldots,i-1$. Now, because the sets $\{f_j\}_{j=1}^m, \{\phi_i\}_{i=1}^m$ are bases for $R_m, \Hom(R_m,R_n)$, the set of elements $f_j \otimes \phi_i$ forms a basis for $R_m \otimes \Hom(R_m,R_n)$. So, the set of elements $\{f_j \otimes \phi_i\}_{j=1}^m$ for a fixed $i$ are linearly independent from one another. Therefore, $s_{ij}=0$ for $j=i,\ldots,m$. The same is true for all $i$, so $s_{ij}=0$ for all $i, j$ and we have shown that $\mathcal{B}$ is a linearly independent set. 

    Now we will show that $\sum_{i=2}^m \dim S_i = \dim K$. 
        \begin{alignat}{4}
            \dim S_i = |\mathcal{B}_i| & = |\{&&f_j\otimes\phi_i, \quad &&(j=1,\ldots,i-1);\\
                                       &      &&f_j\otimes\phi_i - f_{j-i+1}\otimes\phi_1, \quad &&(j=i,\ldots,m)\}| \nonumber\\
                                       & = m &&  && \nonumber
        \end{alignat}
    so $\sum_{i=2}^m \dim S_i = \sum_{i=2}^m m = (m-1)m = m^2-m = \dim K$, as desired. By Lemma~\ref{lem:recognition}, we have that $K = \oplus_{i=2}^m S_i$. It remains to show that each $S_i \cong R_m$. At the start of the proof, we identified $R_m(x)$ and $R_m(y)$, so we can trace this identification backwards to conclude that there is a map from $S_i$ to itself, which corresponds to a map $\alpha_i = \mathbbm{1}_m$ coming from $\alpha_{R_m}=\mathbbm{1}$. So, we only need to show that $\beta_i = J_m(0)$. In order to show this, we will apply the map $\beta_i$ to the basis elements of $S_i$.
        \[\beta_i(f_j\otimes\phi_i) = \beta_{R_m}(f_j)\otimes\phi_i = \begin{cases}
                0, & j=1\\
                f_{j-1}\otimes\phi_i, & j=2,\ldots,i-1
            \end{cases}\]
        \begin{nalign} 
            \beta_i(f_j\otimes\phi_i-f_{j-i+1}\otimes\phi_1) &= \beta_{R_m}(f_j)\otimes\phi_i - \beta_{R_m}(f_{j-i+1}) \otimes\phi_1 \\ 
                    &= \begin{cases}
                            f_{j-1}\otimes\phi_i, & j=i\\
                            f_{j-1}\otimes\phi_i-f_{j-i}\otimes\phi_1, & j=i+1,\ldots,m
                        \end{cases}
        \end{nalign}
    Therefore, $\beta_i = J_m(0)$, as desired. So, we can conclude that $K \cong (R_m)^{m-1}$. This structure is not dependent on the choice of $\lambda$, so in general, we have that 
        \[\ker \ev[R_m(\lambda)][R_n(\lambda)] \cong (R_m(\lambda))^{m-1}.\]
    
\subsubsection{Case II:}
    Suppose $m >n$. Consider the evaluation morphism 
        \[\ev[R_m][R_n]:R_m\otimes\Hom(R_m,R_n).\] 
    Because $\phi \in \Hom(R_m,R_n)$ has the form $\phi=(A,A)$ and each indecomposable regular representation has an identity map at the arrow $\alpha$, we identity these representations at the vertices $x$ and $y$, and write 
        \[R_m:R_m(y) \xto{\beta_{R_m}} R_m(y).\] 
    Let $R_m = \langle e_1, \ldots, e_{m-n}, f_1,\ldots,f_n\rangle$ and $R_n = \langle h_1,\ldots,h_n\rangle$. If $m>n$, we have  
        \[\Hom(R_m, R_n) = \left\{ \left[
                        \begin{array}{ccccccc}
                          & & a_1 & a_2      & \cdots & a_{n}\\
                         &  &     & \ddots   & \ddots & \vdots \\
                       &  &  &  & \ddots & a_2  \\
                         & &  &  &        & a_1    
                        \end{array}
                        \right] \in k^{n\times m} \right\}
        \]
    with $\dim \Hom(R_m,R_n) = n$. Notice that these matrices have $m-n$ columns of all zeros on the left and we label these elements of $R_m$ as $e_1, \ldots, e_{m-n}$ to highlight this fact. We can define a basis of the $\Hom$-space between $R_m$ and $R_n$ according to the structure above to be given by morphisms $\phi_i$ such that 
        \begin{nalign}
            \phi_i(e_j) &= 0, \qquad \qquad j=1,\ldots,m-n\\
            \vspace{5pt} \\
            \phi_i(f_j) &= \begin{cases}
                                0, & j=1,\ldots,i-1\\
                                h_{j-i+1}, & j=i,\ldots,n
                            \end{cases} 
        \end{nalign}
    for $i=1,\ldots,n$. The morphism $\phi_i$ is chosen so that it corresponds to a matrix $A$ in the description above with $a_i=1$ and $a_k=0$ for $k\neq 1$. The evaluation morphism $\ev[R_m][R_n]$ is surjective because for any $h_j \in R_n$, we can write \[\ev[R_m][R_n](f_{j} \otimes \phi_1) = \phi_1(f_{j}) = h_{j-1+1}=h_j.\] We now calculate the dimension of the kernel:
        \begin{nalign}
            \dim \ker \ev[R_m][R_n] &= \dim R_m \otimes \Hom(R_m,R_n) - \dim R_n\\
                                    &= mn - n
        \end{nalign}
    Our aim is now to describe a set of $mn-n$ basis elements of $K = \ker \ev[R_m][R_n]$. Define
        \[\mathcal{B}_1 := \{ e_{j} \otimes \phi_1 \, | \,  j=1, \ldots,m-n \}\]
    For $i=2,\ldots,n$ define
        \begin{alignat}{2}
            \mathcal{B}_i := &\{e_{j} \otimes \phi_i - e_{j-i+1} \otimes \phi_1 \, &&| \, j = 1,\ldots, m-n\} \, \cup \\
                             &\{f_{j} \otimes \phi_i - e_{m-n+j-i+1} \otimes \phi_1 \, &&| \,j = 1,\ldots, i-1\}\, \cup \nonumber \\
                             &\{f_{j} \otimes \phi_i - f_{j-i+1} \otimes \phi_1 \, &&| \,j = i, \ldots, n\} \nonumber
        \end{alignat}
    Note that the elements $f_k,e_k$ are zero when $k<1$. Let $S_i$ to be the representation generated by $\mathcal{B}_i$ with the map $\beta_i: S_i \to S_i$ given by the action of $\beta_{R_m}$. Using Lemma~\ref{lem:recognition}, we will show that $K = \oplus_{i=1}^n S_i$.
    
    First we prove that each $\mathcal{B}_i$ is contained in the kernel $K$. For the elements in $\mathcal{B}_1$, we have 
        \begin{nalign} e_j \otimes \phi_1 &\mapsto \phi_1(e_j) \\ &= 0\end{nalign}
    by definition for $j=1,\ldots,m-n$. For the first type of basis element in $\mathcal{B}_i$, $i=2,\ldots,n$, we have
        \begin{nalign} e_j \otimes \phi_i - e_{j-i+1} \otimes \phi_1 &\mapsto \phi_i(e_j) - \phi_1(e_{j-i+1}) \\ &= 0-0 \\ & = 0\end{nalign}
    for $j=1,\ldots,i-1$. For the second type, we have
        \begin{nalign} f_j \otimes \phi_i - e_{m-n+j-i+1} \otimes \phi_1 &\mapsto \phi_i(f_j) - \phi_1(e_{m-n+j-i+1}) \\ &= h_{j-i+1} - 0 \\ &= 0.\end{nalign}
    Since $j=1,\ldots, i-1$, $j-i+1 < 1$ and so $ h_{j-i+1} = 0$. Finally, for the third type, we have $j=i,\ldots,n$, so
        \begin{nalign} f_j \otimes \phi_i - f_{j-i+1} \otimes \phi_1 &\mapsto \phi_i(f_j) - \phi_1(f_{j-i+1}) \\ &= h_{j-i+1} - h_{j-i+1}\\  &= 0.\end{nalign}
    Therefore, $S_i \subseteq K$ for $i=1,\ldots, n$. Let $\mathcal{B} = \cup_{i=1}^n \mathcal{B}_i$ be the set of all elements generating some $S_i$, for $i=1,\ldots,n$. Now suppose we have a linear combination
        \begin{alignat}{2}
            0 &= \sum_{j=1}^{m-n} r_j (e_j \otimes \phi_1) &&+ \sum_{i=2}^n \sum_{j=1}^{m-n} s_{ij} (e_j \otimes \phi_i - e_{j-i+1}\otimes \phi_1) \\ 
                    &&&+ \sum_{i=2}^n \sum_{j=1}^{i-1} t_{ij}(f_j \otimes \phi_i - e_{m-n+j-i+1}\otimes\phi_1) \nonumber\\ 
                    &&&+ \sum_{i=2}^n \sum_{j=i}^{n} u_{ij}(f_j \otimes \phi_i - f_{j-i+1}\otimes\phi_1)\nonumber
        \end{alignat}
    Note that the coefficients of the linear combination can be applied so that they only appear in the first argument of the tensor. Also, because the elements $\{\phi_i\}$ for a basis of $\Hom(R_m,R_n)$, none of them are zero. Therefore, in order to sum to zero, the terms involving $\phi_i$ for each fixed $i=1,\ldots,n$ must cancel each other out. For a fixed $i > 1$, we have
        \begin{nalign}
            0  &= \sum_{j=1}^{m-n} s_{ij}e_j \otimes \phi_i +  \sum_{j=1}^{i-1} t_{ij}f_j \otimes \phi_i + \sum_{j=i}^{n} u_{ij}f_j \otimes \phi_i\\
                &= \left( \sum_{j=1}^{m-n} s_{ij}e_j +  \sum_{j=1}^{i-1} t_{ij}f_j + \sum_{j=i}^{n} u_{ij}f_j \right) \otimes \phi_i 
        \end{nalign}
    Because the $\phi_i$ is a basis element of $\Hom(R_m,R_n)$, it is not zero, so we must have that the sum in the first argument of the tensor is zero: 
        \[0 = \sum_{j=1}^{m-n} s_{ij}e_j +  \sum_{j=1}^{i-1} t_{ij}f_j + \sum_{j=i}^{n} u_{ij}f_j \]
    This is a linear combination of elements which form a basis of $R_m$ and so we conclude that $s_{ij} = t_{ij} = u_{ij} = 0$ for each $j$. The argument is independent of $i>1$, so we conclude that $s_{ij} = t_{ij} = u_{ij} = 0$ for each $i$ as well as $j$. Going back to the original sum, we are only left with the first term 
        \[0 = \sum_{j=1}^{m-n} r_{j} e_j \otimes \phi_1.\]
    This implies that $\sum_{j=1}^{m-n}r_j e_j = 0$ and because the $\{e_j\}_{j=1}^{m-n}$ is a subset of a basis, it must be that $r_j=0$ for all $j$. Therefore, the set $\mathcal{B}$ is linearly independent.

    Next, we count the dimension of each $S_i$:
        \begin{alignat}{3}
            \dim S_1  = |\mathcal{B}_1| & = \, &&|\{e_j\otimes \phi_1 \, | \, j=1,\ldots, m-n\}| \\
                     & = &&m-n  \nonumber\\
            \nonumber\\
            \dim S_{i > 1} = |\mathcal{B}_i| & = \, &&|\{e_{j} \otimes \phi_i - e_{j-i+1} \otimes \phi_1 \quad \, | \, j = 1,\ldots, m-n\} \\
                     & &&+ \{f_{j} \otimes \phi_i - e_{m-n+j-i+1} \otimes \phi_1 \, | \, j = 1,\ldots, i-1\} \nonumber\\
                     & &&+ \{f_{j} \otimes \phi_i - f_{j-i+1} \otimes \phi_1 \, | \, j = i, \ldots, n\}|  \nonumber\\
                     & = \, &&m-n+n  \nonumber\\      
                     & = \, &&m \nonumber
        \end{alignat}
    Because there are $n-1$ subrepresentations $S_i$ with $i \neq 1$, the dimension of $S$ is \[\sum_i \dim S_i = m-n + (n-1)m = mn-n,\] which matches the dimension of the kernel $K$. By Lemma~\ref{lem:recognition}, we conclude that $K = \oplus_{i=1}^n S_i$

    Finally, we must show that $S_1 \cong R_{m-n}$ and that $S_i \cong R_m$ for $i=2,\ldots,n$. We will do so by applying $\beta_i$ to each subrepresentation $S_i$. Recall that we identified $R_m(x)$ and $R_m(y)$ using that $\alpha_{R_m}= \mathbbm{1}_m$. Thus, we have $S_i(x) = S_i(y)$, as well, and $\alpha_1 = \mathbbm{1}_{m-n}$ and $\alpha_i = \mathbbm{1}_{m}$ for $i=2,\ldots,n$.  
    
    For $i=1$, we have 
        \begin{nalign}
            \beta_1(e_j \otimes \phi_1) = \beta_{R_m}(e_j) \otimes \phi_1 = \begin{cases} 0, & j=1 \\ e_{j-1} \otimes \phi_1, & j=2,\ldots,m-n. \end{cases}
        \end{nalign}
    So, $\beta_1 = J_{m-n}(0)$ which means that $S_1 \cong R_{m-n}$. For $i>1$,
        \begin{nalign}
            \beta_i(e_j\otimes \phi_i - e_{j-i+1} \otimes \phi_1) &= \beta_{R_m}(e_j)\otimes\phi_i - \beta_{R_m}(e_{j-i+1}) \otimes \phi_1\\
                &= \begin{cases}0, & j=1\\ e_{j-1}\otimes\phi_i - e_{j-i}\otimes\phi_1, & j=2,\ldots, m-n\end{cases}
        \end{nalign}
    Note that for $j=1$, we have $e_0 \otimes \phi_i - e_{1-i}\otimes\phi_1 = 0-0 = 0$. Next, we have   
        \begin{nalign}
            \beta_i(f_j\otimes \phi_i - e_{m-n+j-i+1} \otimes \phi_1) &= \beta_{R_m}(f_j)\otimes\phi_i - \beta_{R_m}(e_{m-n+j-i+1}) \otimes \phi_1\\
                &= \begin{cases}e_{m-n} \otimes \phi_i - e_{m-n-i+1}\otimes\phi_1, & j=1\\ f_{j-1}\otimes\phi_i - e_{m-n+j-i}\otimes\phi_1, & j=2,\ldots, i-1.\end{cases}
        \end{nalign}
    In the case that $j=1$, we use that $\beta_{R_m}{f_1}=e_{m-n}$. Finally, 
        \begin{nalign}
            \beta_i(f_j\otimes \phi_i - f_{j-i+1} \otimes \phi_1) &= \beta_{R_m}(f_j)\otimes\phi_i - \beta_{R_m}(f_{j-i+1}) \otimes \phi_1\\
                &= \begin{cases}f_{i-1} \otimes \phi_i - e_{m-n}\otimes\phi_1, & j=i\\ f_{j-1}\otimes\phi_i - f_{j-i}\otimes\phi_1, & j=i+1,\ldots,n.\end{cases}
        \end{nalign}
    Combining these results, we conclude that $\beta_i = J_m(0)$ for $i=2,\ldots,n$. Hence, $S_i \cong R_m$. We now conclude that $K \cong R_{m-n} \oplus (R_m)^{n-1}$. As noted at the beginning of the proof, this structure is independent of $\lambda$, so more generally, we have 
        \[\ker \ev[R_m(\lambda)][R_n(\lambda)] \cong R_{m-n}(\lambda) \oplus (R_m(\lambda))^{n-1}.\] 
    Combined with the result from Case I, we have obtained the desired result.
\end{proof}

\subsection{From Regular to Preinjective}

\begin{customthm}{\ref{thm:ev(Rm,In)}}
    Given a regular indecomposable representation $R_m(\lambda)$, $m \geq 1, \lambda \in k \cup \{\infty\}$, and a preinjective indecomposable representation $I_n, n \geq 0$, of the Kronecker quiver $\mathsf{K}$, we have that 
        \begin{equation}\ker \ev[R_m(\lambda)][I_n] \cong P_{m-n-1} \oplus (R_m(\lambda))^{m-1}, \quad m \geq n+1 \text{ or } m>1.\end{equation}
    All other kernels are equal to 0.
\end{customthm}

\begin{proof}
    As with the previous lemma, our first step is understanding the evaluation morphism. We will proceed in two cases, $\ev[R_m][I_0]$ and $\ev[R_m][I_1]$, where $R_m \coloneqq R_m(0)$. We note that by \cite{ElementsVol2} $\Hom(R_m(\lambda),N) \cong \Hom(R_m(\mu),N)$ for $\lambda, \mu \in k \cup \{\infty\}$, so we may use the simplest case of $\lambda = 0$. \\

\subsubsection{Case I:}
    Consider the morphism \[\ev[R_m][I_0]: R_m \otimes \Hom(R_m,I_0) \to I_0.\] By Proposition~\ref{prop:hom_isos}, we know that $\Hom(R_m,I_0) \cong \Hom(R_m, I(x)) \cong (R_m(x))^*$, the dual of the vector space at the vertex $x$ for the representation $R_m$. Thus, the evaluation morphism can be rewritten to be
        \[(\langle e_1, \ldots, e_m\rangle \toto \langle f_1, \ldots, f_m \rangle) \otimes (R_m(x))^* \to (k \toto 0)\]
    where we have let $R_m = \langle e_1, \ldots, e_m\rangle \toto \langle f_1, \ldots, f_m \rangle$ and $I_m:k \toto 0$. Note that $R_m(x))^*$ has dual basis $\langle e_1^*, \ldots, e_m^* \rangle$. Distributing the tensor yields:
        \[\langle e_1, \ldots, e_m\rangle \otimes (R_m(x))^* \toto \langle f_1, \ldots, f_m \rangle \otimes (R_m(x))^* \to (k \toto 0)\]
    We know that an morphism in $\Hom(R_m,I_0)$ is a pair of maps $(A,B)$ such that the following diagram commutes.
\begin{equation}\begin{tikzcd}[ampersand replacement=\&,column sep=large]
	{k^m} \& {} \& k \\
	\\
	{k^m} \& {} \& 0
	\arrow["{J_m(0)}", shift left=3, from=1-1, to=3-1]
	\arrow["{\mathbbm{1}_m}"', shift right=2, from=1-1, to=3-1]
	\arrow["{A = [a_1 \, \ldots \, a_m]}", shorten <=9pt, shorten >=9pt, from=1-1, to=1-3]
	\arrow["{B = 0}"', shorten <=9pt, shorten >=9pt, from=3-1, to=3-3]
	\arrow["0", shift left=2, from=1-3, to=3-3]
	\arrow["0"', shift right=2, from=1-3, to=3-3]
\end{tikzcd}\end{equation}
    Because both maps $\alpha_{I_0}, \beta_{I_0} = 0$, there are not commutativity restraints to ensure compatibility with the maps of $I_0$. The map $B$ must be $0$ and the map $A$ is given by the evaluation of the dual basis on the basis of $R_m$, acting as $e_j \otimes \phi_i \mapsto \phi_i(e_j) = \delta_{j}^i $, where $\phi_i = e_i^*$. Let $K = \ker \ev[R_m][I_0]$.
    
    The evaluation morphism $\ev[R_m][I_0]$ is surjective, so we may calculate the dimension of the kernel as follows:
            \begin{nalign}
                \ddim K &= \ddim R_m \otimes \Hom(R_m,I_0) - \ddim I_0 \\
                        &= \ddim R_m \otimes R_m(x)^* - \ddim I_0 \\
                        &= \ddim R_m^m - \ddim I_0 \\
                        &= m(m,m) - (1,0)\\
                        &= (m^2-1,m^2)
            \end{nalign}
    Our goal is to prove the decomposition of $K$ into indecomposable summands. We will use Lemma~\ref{lem:recognition}. First, define the following collections of elements of $K$.
        \begin{nalign}
            \mathcal{B}_1(x) &= \{e_j \otimes \phi_1 \, | \, j=2,\ldots,m\} \\
            \mathcal{B}_1(y) &= \{f_j\otimes\phi_1 \, | \, j=1,\ldots,m\}
        \end{nalign} and
        \begin{nalign}
            \mathcal{B}_i(x) &= \{e_j \otimes \phi_i - e_{j-i+1} \otimes \phi_1 \, | \, j=1,\ldots,m\}\\
            \mathcal{B}_i(y) &= \{f_j \otimes \phi_i - f_{j-i+1} \otimes \phi_1\, | \, j=1,\ldots,m\}
        \end{nalign}
    for $i=2,\ldots, m$. Note that the terms with $e_k$ or $f_k$ with $k<1$ are equal to 0. Let $S_i$ be the subrepresentation of $K$ with $S_i(x)$ is generated by $\mathcal{B}_i(x)$ and similarly at the vertex $y$. These representations each have maps $\alpha_i, \beta_i$ corresponding to the arrows $\alpha, \beta$ of the Kronecker quiver $\mathsf{K}$, respectively.
    
    First, we will show that $S_i \subseteq K$ for each $i$. First, we note that all elements involving $f_j$ are in the kernel because the map $B$ is zero. Therefore, we focus on showing that the elements at the vertex $x$ (involving $e_j$) are in the kernel:
        \begin{nalign}
            e_j \otimes \phi_1 &\mapsto \phi_1(e_j)\\
                               &= e_1^*(e_j)\\
                               &= \delta_j^i = 0; \quad 
        \end{nalign}
    for $j = 2,\ldots,m$. Similarly, we have 
        \begin{nalign}    
            e_j \otimes \phi_i - e_{j-i+1} \otimes \phi 1 &\mapsto \phi_i(e_j)-\phi_1(e_{j-i+1})\\
                                                                &= e_i^*(e_j) - e_1^*(e_{j-i+1})\\
                                                                &= \delta_j^i - \delta_{j-i+1}^1
    \end{nalign}
    for $j=1,\ldots,m$. We notice that $i=j \Leftrightarrow j-i+1=1$, so both terms are nonzero in the same cases. We then get \[\delta_j^i - \delta_{j-i+1}^1=\delta_1^1 - \delta_1^1 = 0.\] In all other cases, both terms are zero, so we get 0 overall. We conclude that $S_i \subseteq K$ for $i=1,\ldots,m$. 
    
    Next we will show that $\mathcal{B}= \cup_i \mathcal{B}_i$ is a linearly independent set. First, we prove the linear independence of the elements in $\mathcal{B}(x) = \cup_i \mathcal{B}_i(x)$. Suppose that we have a linear combination 
        \begin{nalign}
            0 &= \sum_{j=2}^m r_j (e_j \otimes \phi_1) + \sum_{i=2}^m \sum_{j=1}^{m} s_{ij} (e_j \otimes \phi_i - e_{j-i+1} \otimes \phi_1) \\
              &= \sum_{j=2}^m r_j e_j \otimes \phi_1 + \sum_{i=2}^m\sum_{j=1}^{m} s_{ij} e_j \otimes \phi_i - s_{ij} e_{j-i+1} \otimes \phi_1
        \end{nalign}
    Because the $\phi_i$ in the second argument form a basis, they are linearly independent and thus no $\phi_i$ can be written as a linear combination of the others. This means that for a fixed $i$, the terms coefficients of $\phi_i$ must cancel. Thus, if we restrict our view to the terms involving $\phi_i$ for $i>1$, we must have
        \begin{nalign}
            0 &= \sum_{j=1}^{m} s_{ij} (e_j \otimes \phi_i)\\
              &= \sum_{j=1}^{m} s_{ij} e_j \otimes \phi_i
        \end{nalign}
    Since $\phi_i$ is not 0, we must have 
        \[0 =\sum_{j=1}^{m} s_{ij} e_j\]
    But, this is a linear combination of the elements $\{e_j\}_j$ which form a basis for $R_m(x)$. Therefore, it must be that $s_{ij} = 0$ for $j=1,\ldots,m$. Because the argument is the same for each $i>1$, we conclude that $s_{ij}=0$ for $i=2,\ldots,m$ and $j=1,\ldots,m$. The original linear combination thus reduces to 
        \[0 = \sum_{j=2}^m r_j e_j \otimes \phi_1\]
    In the same way as before, we conclude that 
        \[0 = \sum_{j=2}^m r_j e_j\]
    because $\phi_1 \neq 0$. Then, because $\{e_j\}_j$ is a basis, it must be that $r_j=0$ for $j=2,\ldots,m$. Therefore, all of the coefficients in the linear combination must be zero and the elements of $\mathcal{B}(x)$ are linearly independent. A similar argument works for $\mathcal{B}(y)$, using that the set $\{f_j\}$ is a basis of $R_m(y)$.
    
    Now we will show that $\sum_{i=1}^m \ddim S_i = \ddim K$. 
        \begin{nalign}
            \sum_{i=1}^m \ddim S_i &= \ddim S_1 + \sum_{i=2}^m \ddim S_i \\
                                   &= |\mathcal{B}_1| + \sum_{i=2}^m |\mathcal{B}_i| \\
                                   &= (m-1,m) + (m-1)(m,m)\\
                                   &= (m-1,m) + (m^2-m, m^2-m)\\
                                   &= (m^2-m,m^2)\\
                                   &= \ddim K
        \end{nalign}
    By Lemma~\ref{lem:recognition}, we can conclude that $\sum_{i=1}^m S_i =\otimes_{i=1}^m S_i = K$. It remains to show that $S_1 \cong P_{m-1}$ and $S_i \cong R_m$ for $i=2,\ldots,m-1$. Recall that \[S_1: \langle e_j \otimes \phi_1 | j=2,\ldots,m \rangle \toto \langle f_j\otimes\phi_1 | j=1,\ldots,m\rangle\] with maps $\alpha_1, \beta_1$ given by the action of $\alpha_{R_m}, \beta_{R_m}$. So, for $j=2,\ldots,m$, we have
        \begin{nalign}
            \alpha_1(e_j\otimes\phi_1) &= \alpha_{R_m}(e_j)\otimes\phi_1 = f_j \otimes \phi_1\\
            \beta_1(e_j\otimes\phi_1) &= \beta_{R_m}(e_j)\otimes\phi_1 = f_{j-1} \otimes \phi_1
        \end{nalign}
    which gives us that $\alpha_i = \begin{bsmallmatrix} 0 \\ \mathbbm{1}_{m-1} \end{bsmallmatrix}$ because no element $e_j \otimes \phi_1$ is sent to $f_1 \otimes \phi_1$ and this results in a row of all zeros at the top of the matrix, while all other $m-1$ elements $e_j$ are sent to the corresponding $f_j$. We also get that $\beta_i = \begin{bsmallmatrix} \mathbbm{1}_{m-1} \\ 0\end{bsmallmatrix}$ because no element is sent to $f_m \otimes \phi_1$. If we reverse the order of the basis elements for $S_1$, the matrices will match $\alpha_{P_{m-1}}, \beta_{P_{m-1}}$, so we easily see that $S_1 \cong P_{m-1}$.
    
    We repeat the process to show that $S_i \cong R_m$, for $i=2,\ldots,m$.
        \begin{nalign}
            \alpha_i(e_j\otimes\phi_1 - e_{j-i+1} \otimes \phi_1) &= \alpha_{R_m}(e_j)\otimes\phi_i - \alpha_{R_m}(e_{j-i+1}) \otimes \phi_1 \\
                                &= f_j \otimes \phi_i -f_{j-i+1} \otimes \phi_1
        \end{nalign}
    for $j=1,\ldots,m$. We also have
        \begin{nalign}    
            \beta_i(e_j\otimes\phi_1 - e_{j-i+1} \otimes \phi_1) &= \beta_{R_m}(e_j)\otimes\phi_i - \beta_{R_m}(e_{j-i+1}) \otimes\phi_1 \\
                                &= f_{j-1} \otimes \phi_i - f_{j-i}\otimes\phi_1
        \end{nalign}   
    for $j=2,\ldots,m$. We have $\beta_{R_m}(e_1) = 0$ by definition and the element $f_{1-i}=0$ because for all $i \geq 2, 1-i<1$. Therefore, the $\beta_i$ sends the first term to 0 and we can conclude that $\alpha_i = \mathbbm{1}_m, \beta_i = J_m(0)$. So, $S_i \cong R_m$ for $i=2,\ldots,m$. Hence, $K \cong P_{m-1} \oplus (R_m)^{m-1}$, as desired.
    
    Now, we can use Corollary~\ref{cor:ker_ev_t} to extend this formula for the kernel to general $I_n$ with $\tau^{-k} I_n \cong I_0$ ($n=2k$):
        \begin{nalign}
            \ev[R_m][I_n] &\cong \tau^{k} \ev[\tau^{-k} R_m][\tau^{-k} I_n]\\
                          &\cong \tau^k \ev[R_m][I_0]\\
                          &\cong \tau^k(P_{m-1} \oplus (R_m)^{m-1}\\\
                          &\cong P_{m-2k-1} \oplus (R_m)^{m-1}\\
                          &\cong P_{m-n-1} \oplus (R_m)^{m-1}
        \end{nalign}
    which agrees with the desired formula. Note that when $m < n+1$, we lose the summand $P_{m-n-1}$.\\

\subsubsection{Case II:}
    Consider the morphism \[\ev[R_m][I_1]: R_m \otimes \Hom(R_m,I_1) \to I_1.\] As in the previous case, our first step is to understand the structure of $\Hom(R_m,I_1)$. We will calculate this space directly. First, we know that a map $\phi \in \Hom(R_m,I_1)$ is a pair of maps $(A,B)$ which commute with the maps $\alpha_{R_m}, \beta_{R_m}, \alpha_{I_1}, \beta_{I_1}$ according to the diagram below. 
\begin{equation}\begin{tikzcd}[ampersand replacement=\&,column sep=large]
	{k^m} \& {} \& {k^2} \\
	\\
	{k^m} \& {}\& k
	\arrow["{\mathbbm{1}_m}"', shift right=2, from=1-1, to=3-1]
	\arrow["{J_m(0)}", shift left=2, from=1-1, to=3-1]
	\arrow["{\begin{bsmallmatrix} 1 & 0 \end{bsmallmatrix}}"', shift right=2, from=1-3, to=3-3]
	\arrow["{\begin{bsmallmatrix} 0 & 1 \end{bsmallmatrix}}", shift left=2, from=1-3, to=3-3]
	\arrow["{A = \begin{bsmallmatrix} a_{11} & \ldots & a_{1m}\\ a_{21} & \ldots & a_{2m} \end{bsmallmatrix}}", from=1-1, to=1-3]
	\arrow["{B = \begin{bsmallmatrix} b_1 & \ldots & b_m \end{bsmallmatrix}}"', from=3-1, to=3-3]
\end{tikzcd}\end{equation}
    So, we must have 
        \begin{nalign}
            \begin{bsmallmatrix} 1 & 0 \end{bsmallmatrix} \, A = B \, \mathbbm{1}_m &\Rightarrow \begin{bsmallmatrix}a_{11}  & \ldots &  a_{1m}\end{bsmallmatrix} = \begin{bsmallmatrix}b_1 & \ldots & b_m\end{bsmallmatrix} \\
            \begin{bsmallmatrix}0 & 1\end{bsmallmatrix} \, A = B \, J_m(0) &\Rightarrow \begin{bsmallmatrix}a_{21} & \ldots & a_{2m}\end{bsmallmatrix} = \begin{bsmallmatrix}0 & b_1 & \ldots & b_{m-1}\end{bsmallmatrix}
        \end{nalign}
    Therefore, the maps $A$ and $B$ can be written as
        \[A = \begin{bmatrix} a_1 & a_2 & \ldots & a_m \\ 0 & a_1 & \ldots & a_{m-1}\end{bmatrix} \quad \text{ and } \quad 
          B = \begin{bmatrix} a_1 & \ldots & a_m \end{bmatrix}\]
    where $a_i \in k$. Define $\phi_i = (A_i,B_i)$ where $A_i, B_i$ denote the matrices $A,B$ with $a_i = 1$ and $a_k=0$ for $k \neq i$. It is clear that the set $\{\phi_i\}_{i=1}^m$ is then a basis of $\Hom(R_m,I_1)$. 
    
    Let \(R_m = \langle e_1,\ldots,e_m \rangle \toto \langle f_1,\ldots,f_m \rangle \quad \text{ and } \quad I_1 = \langle g_1,g_2 \rangle \toto \langle h_1 \rangle.\) Then, the map $\phi_i$ for $i=1,\ldots,m$ is given on these bases by
        \begin{nalign}
            \phi_i(e_j) &= \begin{cases}g_1, & j=i\\ g_2, & j=i+1\\ 0, & j \neq i, i+1 \end{cases}\\
            &\\
            \phi_i(f_j) &= \begin{cases}h_1, & j=i\\ 0, & j\neq i \end{cases} 
        \end{nalign}
    for $j=1,\ldots,m$. Let $K = \ker \ev[R_m][I_1]$. Note that $\ev[R_m][I_1]$ is surjective because we can write
        \begin{nalign}
            g_1 &= \phi_1(e_1) = \ev[R_m][I_1](e_1 \otimes \phi_1)\\
            g_2 &= \phi_1(e_1) = \ev[R_m][I_1](e_2 \otimes \phi_1)\\
            h_1 &= \phi_1(f_1) = \ev[R_m][I_1](f_1 \otimes \phi_1)
        \end{nalign}
    Therefore, we can easily calculate $\ddim K$:
        \[\ddim K = \ddim R_m \otimes \Hom(R_m,I_1) - \ddim I_1 = m(m,m)-(2,1) = (m^2-2,m^2-1).\]
    Let us define the following collections of elements in $K$.
        \begin{nalign}
            \mathcal{B}_1(x) = \{&e_j \otimes \phi_1 \, | \, j=3,\ldots,m\} \\
            \mathcal{B}_1(y) = \{&f_j\otimes\phi_1 \, | \, j=2\ldots,m\}\\
            & \\
            \mathcal{B}_i(x) = \{&e_j \otimes \phi_i \, | \, j=1,\ldots, i-1\}  \,\cup \,
                               \{e_j \otimes \phi_i - e_{j-i+1} \otimes \phi_1 \, | \,j=i,\ldots, m\} \\
            \mathcal{B}_i(y) = \{&f_j \otimes \phi_i \, | \, j=1,\ldots, i-1\} \, \cup \, \{f_j \otimes \phi_i - f_{j-i+1} \otimes \phi_1 \, | \,j=i,\ldots, m\}
        \end{nalign}
    for $i=2,\ldots,m$. Let $S_i$ be the representation such that $S_i(x)$ is generated by $\mathcal{B}_i(x)$ and similarly at the vertex $y$. These representations each have maps $\alpha_i, \beta_i$ given by the action of $\alpha_{R_m}, \beta_{R_m}$, respectively.
    
    To begin, we show that $S_i \subseteq K$ for each $i$. 
        \begin{nalign}
            e_j \otimes \phi_1 &\mapsto \phi_1(e_j) = 0
        \end{nalign}
    for $j = 3,\ldots,m$, and
        \begin{nalign}
            f_j \otimes \phi_1 &\mapsto \phi_1(f_j) = 0
        \end{nalign}
    for $j = 2,\ldots,m$. So, $S_1 \subseteq K$. For $i=2,\ldots,m$, we have
        \begin{nalign}
            e_j \otimes \phi_i &\mapsto \phi_i(e_j) = 0 \quad \text{ for } j=1,\ldots,i-1\\ \\
            e_j \otimes \phi_i - e_{j-i+1} \otimes \phi 1 &\mapsto \phi_i(e_i)-\phi_1(e_{1})\\
                                                                &= g_1-g_1\\
                                                                &= 0 \quad \text{ for } j=i\\ \\
            e_j \otimes \phi_i - e_{j-i+1} \otimes \phi 1 &\mapsto \phi_i(e_{i+1})-\phi_1(e_{2})\\
                                                                &= g_2-g_2\\ \
                                                                &= 0 \quad \text{ for } j=i+1\\ \\
            e_j \otimes \phi_i - e_{j-i+1} \otimes \phi 1 &\mapsto \phi_i(e_j)-\phi_1(e_{j-i+1})\\
                                                                &= 0-0\\
                                                                &= 0 \quad \text{ for } j=i+2,\ldots,m
        \end{nalign}
    and at vertex $y$, 
        \begin{nalign}
            f_j \otimes \phi_i &\mapsto \phi_i(f_j) = 0 \quad \text{ for } j=1,\ldots,i-1\\ \\
            f_j \otimes \phi_i - f_{j-i+1} \otimes \phi 1 &\mapsto \phi_i(f_i)-\phi_1(f_{1})\\
                                                                &= h_1-h_1\\ 
                                                                &= 0 \quad \text{ for } j=i\\ \\
            f_j \otimes \phi_i - f_{j-i+1} \otimes \phi 1 &\mapsto \phi_i(f_j)-\phi_1(f_{j-i+1})\\
                                                                &= 0-0\\
                                                                &= 0 \quad \text{ for } j=i+1,\ldots,m
        \end{nalign}
    So, $S_i \subseteq K$ for all $i$. Next we will prove the linear independence of the elements in $\mathcal{B}(x) = \cup_i \mathcal{B}_i(x)$. Suppose that we have a linear combination 
        \begin{nalign}
            0 = \sum_{j=3}^m r_j (e_j \otimes \phi_1) &+ \sum_{i=2}^m \sum_{j=1}^{i-1} s_{ij} (e_j \otimes \phi_i) +\sum_{i=2}^m \sum_{j=i}^{m} s_{ij} (e_j \otimes \phi_i - e_{j-i+1} \otimes \phi_1)
        \end{nalign}
    where we note that there is no issue in the labeling of the coefficients $s_{ij}$ as the second and third terms sum over different $j$. As in the previous case, we first restrict our attention to the terms involving $\phi_i$ for a fixed $i>1$ which must sum to 0.
        \begin{nalign}
            0 &= \sum_{j=1}^{i-1} s_{ij} e_j \otimes \phi_i + \sum_{j=i}^m s_{ij} e_{j} \otimes \phi_i\\
              &= \sum_{j=1}^{m} s_{ij} e_j \otimes \phi_i 
        \end{nalign}
    Since $\phi_i$ is not 0, we must have 
        \[0 = \sum_{j=1}^{m} s_{ij} e_j.\]
    This is now a linear combination of the elements $\{e_j\}_j$ which form a basis for $R_m(x)$. Therefore, it must be that $s_{ij} = 0$ for all $j$. They are also $0$ for each $i>1$ as the argument is the same in these cases. So, the original linear combination is reduced to \[0 = \sum_{j=3}^m r_j e_j \otimes \phi_1.\] We have that $r_j=0$ for $j=3,\ldots,m$ because $\{e_j\}_j$ is a basis, so is a linearly independent set. Now all of the coefficients of the linear combination have been shown to equal, so the set $\mathcal{B}(x)$ is linearly independent. A similar argument works for $\mathcal{B}(y)$, using that the set $\{f_j\}_j$ is a basis of $R_m(y)$.
    
    Now we would like to show that $\sum_{i=1}^m \ddim S_i = \ddim K$. 
        \begin{nalign}
            \ddim S_1(x) &= |\mathcal{B}_1(x)| = |\{e_j \otimes\phi_1 \, |\, j=3,\ldots,m\}| = m-2\\
            \nonumber\\
            \ddim S_1(y) &= |\mathcal{B}_1(y)| = |\{f_j \otimes\phi_1 \, |\, j=2,\ldots,m\}| = m-1\nonumber
        \end{nalign}
    which gives us that $\ddim S_1 = (m-2,m-1)$. 
        \begin{alignat}{3}
            \ddim S_i(x) = |\mathcal{B}_i(x)|  & = \,\, &&|\{e_j\otimes\phi_i \,|\, j=1,\ldots,i-1\}|\\
                                               & &&+|\{e_j\otimes\phi_i - e_{j-i+1}\otimes\phi_1 \,|\, j=i,\ldots,m\}|\nonumber\\
                                               & = \,\, && m \nonumber\\
                                              \nonumber \\
            \ddim S_i(y) = |\mathcal{B}_i(y)|  & = \,\, &&|\{f_j\otimes\phi_i \,|\, j=1,\ldots,i-1\}|\\
                                               & &&+|\{f_j\otimes\phi_i - f_{j-i+1}\otimes\phi_1 \,|\, j=i,\ldots,m\}|\nonumber\\
                                               & = \,\, && m \nonumber                                 
        \end{alignat}
    and so $\ddim S_i = (m,m)$ for $i=2,\ldots,n$. Combining these results, we see that
        \begin{nalign}
            \sum_{i=1}^m \ddim S_i &= \ddim S_1 + \sum_{i=2}^m \ddim S_i \\
                                   &= (m-2,m-1) + (m-1)(m,m)\\
                                   &= (m-2,m-1) + (m^2-m, m^2-m)\\
                                   &= (m^2-2,m^2-1)\\
                                   &= \ddim K
        \end{nalign}
    Therefore, by Lemma~\ref{lem:recognition}, we can conclude that $K =\otimes_{i=1}^m S_i$. It remains to show that $S_1 \cong P_{m-2}$ and $S_i \cong R_m$ for $i=2,\ldots,m$. First, applying $\alpha_1, \beta_1$ to the basis elements of $S_1(x)$ yields 
        \begin{nalign}
            \alpha_1(e_j\otimes\phi_1) &= \alpha_{R_m}(e_j)\otimes\phi_1 = f_j \otimes \phi_1\\
            \beta_1(e_j\otimes\phi_1) &= \beta_{R_m}(e_j)\otimes\phi_1 = f_{j-1} \otimes \phi_1
        \end{nalign}
    for $j=3,\ldots,m$, which gives us that $\alpha_1 = \begin{bsmallmatrix} 0_{1\times(m-2)} \\ \mathbbm{1}_{m-2} \end{bsmallmatrix}$ because no element $e_j \otimes \phi_1$ is sent to $f_2 \otimes \phi_1$ and this results in a row of all zeros at the top of the matrix, while all other $m-1$ elements $e_j$ are sent to the corresponding $f_j$. We also get that $\beta_i = \begin{bsmallmatrix} \mathbbm{1}_{m-2} \\ 0_{1\times(m-2)}\end{bsmallmatrix}$ because no element is sent to $f_m \otimes \phi_1$. If we reverse the order of the basis elements for $S_1$, the matrices will match those of $\alpha_{P_{m-2}}, \beta_{P_{m-2}}$, so we easily see that $S_1 \cong P_{m-2}$.
    
    We repeat the process to show that $S_i \cong R_m$, for $i=2,\ldots,m$.
        \begin{nalign}
            \alpha_i(e_j\otimes\phi_i) &= \alpha_{R_m}(e_j)\otimes\phi_i\\
                                       &=  f_j\otimes\phi_i
        \end{nalign}
    for $j=1,\ldots,i-1$. Also,
        \begin{nalign}
            \alpha_i(e_j\otimes\phi_i - e_{j-i+1} \otimes \phi_1) &= \alpha_{R_m}(e_j)\otimes\phi_i - \alpha_{R_m}(e_{j-i+1}) \otimes \phi_1 \\
                                &= f_j \otimes \phi_i -f_{j-i+1} \otimes \phi_1
        \end{nalign}
    for $j=i,\ldots,m$. Together these calculations tell us that the basis elements of $S_i(x)$ are sent to the corresponding elements with the same $j$ at $S_i(y)$. So, $\alpha_i = \mathbbm{1}_m$. We also have
        \begin{nalign}
            \beta_i(e_j\otimes\phi_i) &= \beta_{R_m}(e_j)\otimes\phi_i\\
                                      &=  \begin{cases}0, & j=1\\ f_{j-1}\otimes\phi_i, & j=2,\ldots,i-1 \end{cases}
        \end{nalign}
    and 
        \begin{nalign}
            \beta_i(e_j\otimes\phi_i - e_{j-i+1} \otimes \phi_1) &= \beta_{R_m}(e_j)\otimes\phi_i - \beta_{R_m}(e_{j-i+1}) \otimes\phi_1 \\
                                &= \begin{cases}f_{i-1}\otimes\phi_i, & j=i\\ f_{j-1}\otimes\phi_i-f_{j-i}\otimes\phi_1, & j=i+1,\ldots,m \end{cases}
        \end{nalign}   
    We have $\beta_i = J_m(0)$. So, $S_i \cong R_m$ for $i=2,\ldots,m$. Hence, $K \cong P_{m-2} \oplus (R_m)^{m-1}$.
    
    Now, we can use Corollary~\ref{cor:ker_ev_t} to extend this formula for the kernel to general $I_n$ with $\tau^{-k} I_n \cong I_1$ ($n=2k+1$):
        \begin{nalign}
            \ev[R_m][I_n] &\cong \tau^{k} \ev[\tau^{-k} R_m][\tau^{-k} I_n]\\
                          &\cong \tau^k \ev[R_m][I_1]\\
                          &\cong \tau^k(P_{m-2} \oplus (R_m)^{m-1}\\\
                          &\cong P_{m-2k-2} \oplus (R_m)^{m-1}\\
                          &\cong P_{m-n-1} \oplus (R_m)^{m-1}
        \end{nalign}
    which agrees with the desired formula and as before, for $m < n+1$, we lose the summand $P_{m-n-1}$. Together with the results of Case I, this proves the proposition.
\end{proof}

\subsection{From Preinjective to Preinjective}

\begin{customthm}{\ref{thm:ev(Im,In)}}
    Given two preinjective indecomposable representations $I_m, I_n, m,n \geq 0$, of the Kronecker quiver $\mathsf{K}$, we have that 
        \[\ker \ev[I_m][I_n] \cong (I_{m+1})^{m-n}, \quad m > n \geq 1.\]
    All other kernels are equal to 0.
\end{customthm}

\begin{proof}

Consider \(\ev[I_m][I_n]: I_m \otimes \Hom(I_m, I_n) \to I_n\). By Lemma~\ref{lem:Hom_mats}, we know that a morphism $\phi \in \Hom(I_m,I_n)$ is given by the following pairs of matrices $(A,B)$:
                \begin{nalign} 
                    A &=
                        \left[
                        \begin{array}{ccccccc}
                            a_1 & a_2 & \cdots & a_{m-n+1}    & & & \\
                                & a_1 & a_2    & \cdots & a_{m-n+1} & &  \\
                     &     & \ddots & \ddots &      & \ddots & \\
                                &     &        & a_1    & a_2  & \cdots & a_{m-n+1} \\
                        \end{array}
                        \right] \in k^{(n+1) \times (m+1)} \\
        \vspace{10pt}\\               
                    B &=
                        \left[
                        \begin{array}{ccccccc}
                            a_1 & a_2 & \cdots & a_{m-n+1} & & & \\
                                & a_1 & a_2    & \cdots & a_{m-n+1} & &  \\
                     &     & \ddots & \ddots &      & \ddots & \\
                                &     &        & a_1    & a_2  & \cdots & a_{m-n+1} \\
                        \end{array}
                        \right] \in k^{n \times m}
                \end{nalign}  
    for $m \geq n$. We will define notation for the basis of this $\Hom$-space as $\phi_i = (A_i. B_i)$ for $i=1,\ldots,m-n+1$ where $A_i, B_i$ denote the matrices $A, B$ above with $a_i=1$ and $a_k=0$ for $k \neq i$. Let
        \begin{nalign}
            I_m = &\langle e_1,\ldots,e_{m+1} \rangle \toto \langle f_1,\ldots,f_{m} \rangle\\
            I_n = &\langle g_1,\ldots,g_{n+1} \rangle \toto \langle h_1,\ldots,h_{n} \rangle 
        \end{nalign}
    Then, the maps $\phi_i$ for $i=1,\ldots,m-n+1$ can be described by
        \begin{nalign}
            \phi_i(e_j) &= \begin{cases}g_j, & j=i,\ldots,i+n\\ 0, & \text{otherwise}\end{cases}\\
            \phi_i(f_j) &= \begin{cases}h_j, & j=i,\ldots,i+n-1\\ 0, & \text{otherwise}\end{cases}
        \end{nalign}
    It is easy to see that the evaluation morphism will be surjective as $\phi_1$ will map $e_1,\ldots,e_{m+1}, f_1, \ldots,f_m$ to $g_1,\ldots,g_{n+1}, h_1, \ldots,h_n$, respectively. So, we may calculate the dimension of the kernel:
        \begin{nalign}
            \ddim K &= \ddim I_m \otimes \Hom(I_m,I_n) - \ddim I_n \\
                    &= (m-n+1)(m+1,m)-(n+1,n)\\
                    &= (m^2-mn+m+m-n+1, m^2-mn+m) - (n+1,n)\\
                    &= (m^2-mn+2m-n+1-n-1, m^2-mn+m-n)\\
                    &= (m^2-mn+2m-2n,m^2-mn+m-n)\\
                    &= (m-n)(m+2,m+1)
        \end{nalign}
    For $i=1,\ldots,m-n$, define $S_i:S_i(x) \toto S_i(y)$ to be generated at each vertex by the following sets of elements
        \begin{nalign}
            \mathcal{B}_i(x) = &\{ -e_{1} \otimes \phi_{i+1}\} \, \cup  \\
                       & \{e_{j} \otimes \phi_i - e_{j+1} \otimes \phi_{i+1} \, | \,j=1,\ldots, m\} \, \cup \\
                       & \{e_{m+1} \otimes \phi_i\} \\
                & &&\\
            \mathcal{B}_i(y) = &\{ -f_{1} \otimes \phi_{i+1}\} \, \cup  \\
                       & \{f_{j} \otimes \phi_i - f_{j+1} \otimes \phi_{i+1} \, | \,j=1,\ldots, m-1\} \, \cup \\
                       & \{f_{m} \otimes \phi_i\}         
        \end{nalign}
    where $\alpha_i, \beta_i:S_i(x) \to S_i(y)$ are given by the actions of $\alpha_{I_m},\beta_{I_m}$, respectively. 
    
    We will use Lemma~\ref{lem:recognition} to show that $K\cong \oplus_i S_i$. We begin by showing that each $S_i \subseteq K$. First, consider the basis elements at the vertex $x$. These are the elements in $S_i(x)$. 
        \[-e_1 \otimes \phi_{i+1} \mapsto \phi_{i+1}(e_1) = 0\]
    because $i=1,\ldots,m-n$ means that $i+1 \neq 1$. 
        \[e_j \otimes \phi_i - e_{j+1}\otimes\phi_{i+1} \mapsto \phi_i(e_j)-\phi_{i+1}(e_{j+1}).\]
    for $j=1,\ldots,m$. Recall that $\phi_i(e_k) = g_k$ for $k=i,\ldots,i+n$ and is otherwise 0 and note that $k=i,\ldots,i+n$ implies $k+1=i+1,\ldots,i+n+1$. Therefore, $\phi_{i+1}(e_{j+1})$ will take on the same values as $\phi_i(e_j)$. So, both terms are both $g_j$ or both $0$. In either case, their difference is $0$, as desired. Finally,
        \[e_{m+1}\otimes\phi_i \mapsto \phi_i(e_{m+1}) = 0\]
    because $i=1,\ldots,m-n$ so $i+n < m+1$. Therefore, $S_i(x) \subseteq K(x)$. The calculation to show that $S_i(y) \subseteq K(y)$ is nearly identical, so we may conclude that $S_i \subseteq K$.

    Now we provide justification that the set $\mathcal{B}(x)=\cup_i \mathcal{B}_i(x)$ and $\mathcal{B}(y)=\cup_i \mathcal{B}_i(y)$ are linearly independent. Suppose we have a linear combination of elements from $\mathcal{B}(x)$ which equals 0:
        \[0=\sum_{i=1}^{m-n} r_i(-e_1\otimes\phi_{i+1}) + \sum_{i=1}^{m-n} \sum_{j=1}^{m} s_{ij}(e_j\otimes\phi_i-e_{j+1}\otimes\phi_{i+1}) + \sum_{i=1}^{m-n} t_i(e_{m+1} \otimes \phi_i). \]

    First, recall that the sets of elements $\{e_j\}_j$ and $\{\phi_i\}_i$ are bases and thus, linearly independent. In particular, for each $k$, the terms involving $\phi_k$ must cancel each other out because we cannot write it as a linear combination of the others. Consider the terms involving $\phi_1$. We must have 

    \begin{nalign}
        0 &= \sum\limits_{j=1}^m s_{1j} e_j \otimes \phi_1 + t_1 e_{m+1} \otimes \phi_1
    \end{nalign}

    which is a linear combination of the elements $\{e_j\}_j$ and so we can conclude that $s_{1j}=0$ for all $j=1,\ldots,m$ and $t_1=0$. Similarly, we can consider $\phi_{m-n+1}$ to conclude that $s_{m-n,j}=0$ for all $j$ and $r_{m-n}=0$. Next, we move our attention to terms involving $\phi_k$ for $1<k<m-n+1$. We obtain the equation

    \begin{alignat}{4}
        0 &= -r_{k-1}e_1 \otimes \phi_k &&+ \sum\limits_{j=1}^m s_{kj} (e_j \otimes \phi_k - e_{j+1}\otimes \phi_{k+1}) \\
        & &&+ \sum\limits_{j=1}^m s_{k-1,j} (e_j \otimes \phi_{k-1} - e_{j+1}\otimes \phi_{k}) + t_k e_{m+1} \otimes \phi_k\nonumber\\
          &= [-r_{k-1}e_1 + \sum\limits_{j=1}^m &&s_{kj} e_j + \sum\limits_{j=1}^m s_{k-1,j} e_{j+1} + t_k e_{m+1}] \otimes \phi_k + \sum\limits_{j=1}^m s_{kj} e_{j+1} \otimes \phi_{k+1}\nonumber\\
          & &&+ \sum\limits_{j=1}^m s_{k-1,j} e_j \otimes \phi_{k-1}\nonumber
    \end{alignat}

    Because of the linear independence of the elements in the second argument, each term must separately equal 0. From the second and third terms, we get that $s_{kj}=0$ for $j=1,\ldots,m$. The sum then reduces to 

    \[0 = (-r_{k-1}e_1 + t_k e_{m+1}) \otimes \phi_k\]

    from which we can deduce that $r_{k-1}=t_k=0$. The argument does not depend on the choice of $k$, so we get that all of the coefficients in the sum are 0 and the elements of $\mathcal{B}(x)$ are linearly independent. A nearly identical argument works to show that $\mathcal{B}(y)$ is also a linearly independent set.

    The following shows that the dimension of each $S_i$ is $(m+2,m+1)$. 
        \begin{nalign}
            \dim S_i(x) = |\mathcal{B}_i(x)| = \, &|\{ -e_{1} \otimes \phi_{i+1}\}| \\
                            &+ |\{e_{j} \otimes \phi_i - e_{j+1} \otimes \phi_{i+1}, j=1,\ldots, m\}|\\
                            &+ |\{e_{m+1} \otimes \phi_i\}| \\
                       =  \, &1 + m + 1\\
                       =  \, &m + 2 \\
                       \\
            \dim S_i(y) = |\mathcal{B}_i(x)| =  \, &|\{ -f_{1} \otimes \phi_{i+1}\}| \\
                             &+ |\{f_{j} \otimes \phi_i - f_{j+1} \otimes \phi_{i+1}, j=1,\ldots, m-1\}|\\
                             &+ |\{f_{m} \otimes \phi_i \}| \\
                       =  \, &1 + m-1 + 1\\
                       = \, &m + 1
        \end{nalign}
    Since there are $m-n$ such representations, $\sum_i \ddim S_i = (m-n)(m+2,m+1)$, which is equal to $\ddim K$. So, by Lemma~\ref{lem:recognition}, we conclude that $K=S=\oplus_i \, S_i$. It remains to show that $S_i \cong I_{m+1}$ for each $i$. To do so, we will show that $\alpha_i = \begin{bsmallmatrix} \mathbbm{1}_{m+1} & 0\end{bsmallmatrix}, \beta_i = \begin{bsmallmatrix} 0 & \mathbbm{1}_{m+1} \end{bsmallmatrix}$. 

    We will apply $\alpha_i, \beta_i$ to the basis elements of $S_i$. Recall that $\alpha_i$ is defined by the action of $\alpha_{I_{m}}$. 
        \begin{nalign}
            \alpha_i(-e_{1} \otimes \phi_{i+1}) &=  -\alpha_{I_m}(e_{1}) \otimes \phi_{i+1}\\
                                                    &= -f_{1} \otimes \phi_{i+1}\\
        \\
            \alpha_i(e_{j} \otimes \phi_i - e_{j+1} \otimes \phi_{i+1}) &\mapsto \alpha_{I_m}(e_{j}) \otimes \phi_i - \alpha_{I_m}(e_{j+1}) \otimes \phi_{i+1}\\  
                            &= \begin{cases} f_j\otimes\phi_i-f_{j+1}\otimes\phi_{i+1}, & j=1,\ldots,m-1\\ f_m\otimes\phi_i, & j=m \end{cases} \\
        \\
            \alpha_i(e_{m+1} \otimes \phi_i) &= \alpha_{I_m}(e_{m+1}) \otimes \phi_i\\ &= 0
        \end{nalign}
    where we note that $\alpha_{I_{m}}(e_{m+1})=0$. We conclude that $\alpha_i \cong \begin{bsmallmatrix} \mathbbm{1}_{m+1} & 0 \end{bsmallmatrix}$.
        
    Recall that $\beta_i$ is defined by the action of $\beta_{I_{m}}$. So, 
        \begin{nalign}
            \beta_i(-e_{1} \otimes \phi_{i+1}) &\mapsto  -\beta_{I_m}(e_{1}) \otimes \phi_{i+1}\\
                                                    &= 0 \\
        \\
            \beta_i(e_{j} \otimes \phi_i - e_{j+1} \otimes \phi_{i+1}) &\mapsto \beta_{I_m}(e_{j}) \otimes \phi_i - \beta_{I_m}(e_{j+1}) \otimes \phi_{i+1}\\  
                            &= \begin{cases}- f_1\otimes \phi_{i+1}, & j=1\\ f_{j-1} \otimes \phi_i - f_{j} \otimes \phi_{i+1}, & j=2,\ldots,m\end{cases}\\
        \\
            \beta_i(e_{m+1} \otimes \phi_i) &\mapsto \beta_{I_m}(e_{m+1}) \otimes \phi_i\\ &= f_m\otimes\phi_i
        \end{nalign}
    where we use that $\beta_{I_{m}}$ sends the element $e_{1}$ to $0$. Thus, $\beta_i \cong \begin{bsmallmatrix} 0 & \mathbbm{1}_{m+1}\end{bsmallmatrix}$ This proves that $S_i \cong I_{m+1}$ for $i=1, \ldots, m-n$ and so, $V \cong \oplus_{i=1}^m S_i \cong (I_{m+1})^{m-n}$, as desired.
\end{proof}

\bibliographystyle{amsalpha}
\bibliography{references.bib}
\end{document}